\def\XXint#1#2#3{{\setbox0=\hbox{$#1{#2#3}{\int}$ }
		\vcenter{\hbox{$#2#3$ }}\kern-.6\wd0}}
\newtheorem{claim}{Claim}
\newtheorem{lem}{Lemma}
\newtheorem{prop}{Proposition}[section]
\newtheorem{ques}{Question}
\newtheorem{defn}{Definition}
\newtheorem{corr}{Corollary}[section]
\newtheorem{remark}{Remark}
\newtheorem{theorem}{Theorem}[section]
\newcommand{\del}{\partial}
\newcommand{\eps}{\varepsilon}
\numberwithin{equation}{section}
\title{Boundary regularity of optimal transport maps on convex domains}
\author{Tristan C. Collins}
\email{tristanc@math.toronto.edu}
\author{Freid Tong}
\email{freid.tong@utoronto.ca}
\address{Department of Mathematics, University of Toronto, 40 St. George St., Toronto, Ontario, Canada
}
\date{\today}
\begin{document}

\maketitle
\begin{abstract}
We study the regularity of optimal transport maps between convex domains with quadratic cost. For nondegenerate $C^{\alpha}$-densities, we prove $C^{1, 1-\eps}$-regularity of the potentials up to the boundary. If in addition the boundary is $C^{1, \alpha}$, we improve this to $C^{2, \alpha}$-regularity. We also investigate pointwise $C^{1, 1}$-regularity at boundary points. We obtain a complete characterization of pointwise $C^{1, 1}$-regularity for planar polytopes in terms of the geometry of tangent cones. Furthermore, we study the regularity of optimal transport maps with degenerate densities on cones, which arise from recent developments in K\"ahler geometry. The main new technical tool we introduce is a monotonicity formula for optimal transport maps on convex domains which characterizes the homogeneity of blow-ups. 
\end{abstract}
\tableofcontents

\section{Introduction}

In this paper we study the global regularity properties of solutions of the Monge-Amp\`ere equation
\begin{equation}\label{eq: OTIntro}
\begin{aligned}
\det D^2u &= \frac{g(x)}{g'(\nabla u(x))} \qquad x \in \Omega\\
\nabla u(\Omega) &= \Omega'
\end{aligned}
\end{equation}
where $\Omega, \Omega'$ are convex domains.  This type of equation arises naturally in the study of optimal transportation with quadratic cost, as well as in various aspects of pure and applied mathematics \cite{DPF, Evans, RR, Villani, Villani2}. The existence and uniqueness of a weak solution to~\eqref{eq: OTIntro} was established by Brenier \cite{Brenier}.  Regularity of solutions to ~\eqref{eq: OTIntro} has been a central issue in the field.  In a series of landmark papers \cite{Caffarelli, Caffarelli2, Caffarelli3}, Caffarelli observed that the convexity of the domains is essential in order to establish any regularity of $u$, and he proved the following regularity results:
\begin{itemize}
\item If $\Omega, \Omega'$ are bounded convex domains, and $C^{-1} < g,g'< C$, then $u$ is globally $C^{1,\delta}$ continuous for some $\delta\in (0, 1)$ \cite{Caffarelli2}.  This conclusion continues to hold if $g, g'$ are allowed to degenerate, but satisfy a doubling type condition \cite{Jhaveri-Savin}.
\item If $\Omega, \Omega'$ are uniformly convex with $C^2$ boundary, $C^{-1}<g,g'<C$, and $g,g'$ are $C^{\alpha}$-H\"older continuous, then $u\in C^{2,\alpha'}(\overline{\Omega})$ \cite{Caffarelli3}. Earlier works of Delanoe \cite{Delanoe} (in dimension $2$), and Urbas \cite{Urbas} established the global $C^{2,\alpha'}$ regularity assuming $\del\Omega, \del\Omega'$ are $C^{2,1}$, and $g,g' \in C^{1,1}$.
\end{itemize}
An important recent result of Chen-Liu-Wang \cite{Chen-Liu-Wang} removed the uniform convexity condition, proving that $u \in C^{2,\alpha}(\overline{\Omega})$ provided $C^{-1}<g,g'<C$ are $C^{\alpha}$, and $\Omega, \Omega'$ are convex and $C^{1,1}$-regular. This result is rather surprising, since the strict convexity of the boundary is a necessary condition for global $C^{2,\alpha}$ regularity of solutions to the Dirichlet problem for the Monge-Amp\`ere equation \cite{CNS, Cheng-Yau, Savin2, Trudinger-Wang}. Assuming $\Omega, \Omega'$ are convex and $C^{1,1}$ regular,  Chen-Liu-Wang also establish the global $C^{1,1-\epsilon}$ regularity (for any $\epsilon>0$), and $W^{2,p}$ regularity (for any $p>0$) assuming that $C^{-1}<g,g'<C$ are continuous up to the boundary.  Previously,  Caffarelli established interior $W^{2,p}$ estimates in \cite{Caffarelli4}, while global $W^{2,p}$ estimates were obtained for the Dirichlet problem by Savin \cite{Savin}, and for the second boundary value problem by Chen-Figalli \cite{Chen-Figalli}.  

Much less is known about the boundary regularity of $u$, beyond Caffarelli's $C^{1, \delta}$-estimate, if one does not impose any regularity assumptions on the boundary. In this direction, a notable result of Savin-Yu \cite{Savin-Yu} in dimension $2$ establishes the global $C^{1,1-\epsilon}$ and $W^{2,p}$ regularity of solutions when $g=g'=1$ for arbitrary convex domains $\Omega, \Omega' \subset \mathbb R^2$. 

In this paper we develop new techniques for the regularity of optimal transport maps on general convex domains. Our first result is a global $W^{2,p}$ and $C^{1,1-\epsilon}$ estimate for solutions of ~\eqref{eq: OTIntro} in arbitrary convex domains in any dimension, assuming $C^{-1}\leq g,g'\leq C$ are $C^{\alpha}$. This extends the results of Savin-Yu \cite{Savin-Yu} to all dimensions. 
\begin{theorem}\label{thm: INTROC1aW2pregularity}
	   Let $u$ satisfy ~\eqref{eq: OTIntro}, $\Omega, \Omega'$ are convex, and $0<g(x)\in C^{\alpha}(\overline{\Omega}), 0<g'(y)\in C^{\alpha}(\overline{\Omega'})$. Then for any $\eps>0$, $u\in C^{1, 1-\eps}(\overline\Omega)$, and for any $p>1$, $u\in W^{2, p}(\overline\Omega)$. 
\end{theorem}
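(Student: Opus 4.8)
The plan is to reduce the global $C^{1,1-\eps}$ and $W^{2,p}$ estimate to an interior estimate by a geometric localization argument at the boundary, following the strategy of Caffarelli's boundary regularity theory combined with the Chen--Liu--Wang methods. The starting point is Caffarelli's global $C^{1,\delta}$ estimate, which is available here since $\Omega,\Omega'$ are merely convex and $g,g'$ are bounded between positive constants; this gives us that $u\in C^{1,\delta}(\overline\Omega)$, so $\nabla u$ extends continuously to the boundary and $\nabla u(\overline\Omega)=\overline{\Omega'}$. The key object is the family of sections (sublevel sets of $u$ minus a supporting plane) centered at a boundary point $x_0\in\del\Omega$; by John's lemma each section is comparable to an ellipsoid, and the main geometric input—convexity of both $\Omega$ and $\Omega'$—forces these sections to have bounded eccentricity after an affine normalization, even up to the boundary. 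This is where the convexity of $\Omega'$ enters: it controls the image $\nabla u(S)$ of a section and prevents the sections from becoming arbitrarily thin in a direction tangent to $\del\Omega$.

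The main steps, in order, are as follows. First, I would establish the relevant \emph{boundary section geometry}: for $x_0\in\del\Omega$ and small $t>0$, the section $S_t(x_0)=\{x\in\overline\Omega : u(x)<u(x_0)+\nabla u(x_0)\cdot(x-x_0)+t\}$ satisfies, after the John-ellipsoid affine normalization $T_t$, that $T_t(S_t(x_0))$ contains a fixed ball and is contained in a fixed ball, with $|S_t(x_0)|\simeq t^{n/2}$; this is precisely the content of the Chen--Liu--Wang normalization for convex (not necessarily uniformly convex, not necessarily smooth) domains, and it uses only the convexity of $\Omega,\Omega'$ together with $C^{-1}\le g,g'\le C$. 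Second, with the normalized sections in hand, I would apply the $C^\alpha$-perturbation argument: on the normalized section the right-hand side $g(x)/g'(\nabla u(x))$ is $C^\alpha$ (using $u\in C^{1,\delta}$ so that the composition $g'\circ\nabla u$ is Hölder, with a possibly smaller exponent $\alpha'>0$), and the normalized solution is close to a solution of $\det D^2 w = \text{const}$ with the same boundary section, so Pogorelov/Caffarelli-type interior estimates and a De Giorgi-type iteration on the scales $t\to 0$ yield the improvement of the modulus of continuity of $\nabla u$ from $C^\delta$ to $C^{1-\eps}$ for every $\eps>0$. Third, the $W^{2,p}$ bound follows by the standard covering argument of Caffarelli: the measure estimate $|\{x\in S : D^2u(x) \ge M\}| \le C M^{-\gamma}|S|$ on normalized sections, upgraded to the desired $p$ by choosing the normalization scale, combined with the Besicovitch/Vitali covering by sections and a boundary version of the strong-type estimate.

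The hard part, I expect, will be the \emph{boundary section normalization} step—showing that the sections $S_t(x_0)$ at a boundary point remain of bounded eccentricity after affine normalization as $t\to 0$, uniformly in $x_0\in\del\Omega$, \emph{without} any regularity or uniform convexity assumption on $\del\Omega$ or $\del\Omega'$. The subtlety is that a general convex domain can have flat boundary pieces, corners, or curvature degenerating to zero, and a priori the sections could degenerate; the resolution is that the pair of convexity constraints (on $\Omega$ and on $\Omega'$) is self-improving under the Monge--Amp\`ere dynamics, so any would-be degeneration of a section in $\Omega$ is obstructed by the corresponding behavior of its image in $\Omega'$. Making this rigorous requires the careful dichotomy/compactness argument of Chen--Liu--Wang (respectively the two-dimensional argument of Savin--Yu), adapted to all dimensions; once that geometric fact is secured, the analytic part (the $C^\alpha$-perturbation iteration and the covering argument) is relatively standard. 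A secondary technical point is bookkeeping the loss of Hölder exponent: $g\in C^\alpha$ but $g'\circ\nabla u$ is only $C^{\alpha\delta}$ a priori, so one runs the iteration with the effective exponent and notes that the conclusion $C^{1,1-\eps}$ and $W^{2,p}$ is insensitive to which positive exponent one starts from.
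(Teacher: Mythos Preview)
Your outline correctly identifies the decisive step---controlling the shape of boundary sections $S_h(u,x_0)$ uniformly as $h\to 0$ with no regularity assumption on $\partial\Omega,\partial\Omega'$---but the method you propose for that step does not work. You assert that the section normalization ``is precisely the content of the Chen--Liu--Wang normalization for convex (not necessarily uniformly convex, not necessarily smooth) domains, and it uses only the convexity of $\Omega,\Omega'$.'' This is not so: the uniform density and obliqueness arguments in Chen--Liu--Wang make essential use of the $C^{1,1}$ regularity of $\partial\Omega,\partial\Omega'$ (it enters when one compares the domain to its tangent half-space under the affine normalizations $A_h$). Likewise, the Savin--Yu argument you cite is genuinely planar. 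Saying these arguments can be ``adapted to all dimensions'' is a restatement of the problem, not a solution; the whole content of the theorem is to supply that adaptation.

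The paper's route is different from what you sketch and introduces a new ingredient you do not mention: a monotonicity formula for the quantity $r^{-n}\mu(D_r(u,x_0))$, where $D_r$ is the ``extrinsic ball'' $\{x:(x-x_0)\cdot(\nabla u(x)-\nabla u(x_0))\le r^2\}$. For H\"older densities this quantity is (up to a harmless correction factor) monotone non-increasing in $r$; this directly yields the uniform density lower bound $|S_h(u,x_0)|\gtrsim h^{n/2}$ on arbitrary convex domains, with no boundary regularity hypothesis. The monotonicity also forces any blow-up along a sequence $h_i\to 0$ to be a \emph{homogeneous degree-$2$} optimal transport map between cones. The $C^{1,1-\eps}$ estimate then comes from a compactness/contradiction argument: if the inclusions $h^{\frac12+\eps}S_1^c\subset S_h^c\subset h^{\frac12-\eps}S_1^c$ failed along a sequence of scales, one could blow up and obtain a homogeneous degree-$2$ limit for which the corresponding inclusions hold trivially, a contradiction. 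Only after this section-shape control is in hand does one invoke the (by now standard) Savin--Yu/Chen--Liu--Wang machinery to deduce $W^{2,p}$. In short, your downstream analytic steps are fine, but the geometric input you need at the boundary is supplied by the monotonicity formula, not by a direct adaptation of existing arguments.
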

See Theorems~\ref{thm: global-C-1-1-eps-regularity} and ~\ref{thm: global-W-2-p-regularity} for the more precise and quantitative statement.

We also obtain the a global $C^{2,\alpha}$-regularity result, which extends the result of Chen-Liu-Wang \cite{Chen-Liu-Wang} from $C^{1,1}$ convex domains to $C^{1,\beta}$ convex domains. 
\begin{theorem}\label{thm: INTRO-C-2-a-regularity}
        Suppose that $u$ satisfies ~\eqref{eq: OTIntro}, $0<g(x)\in C^{\alpha}(\overline{\Omega}), 0<g'(y)\in C^{\alpha}(\overline{\Omega'})$. Suppose in addition $\Omega, \Omega'$ are convex and have $C^{1, \beta}$-boundary for some $\beta\in(0, 1)$. Then $u\in C^{2, \gamma}(\overline\Omega)$ for $\gamma = \min(\alpha, \beta)$.
\end{theorem}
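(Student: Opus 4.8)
The plan is to bootstrap from the global $C^{1,1-\eps}$ regularity of Theorem~\ref{thm: INTROC1aW2pregularity}, using the additional $C^{1,\beta}$ regularity of the boundary to upgrade to genuine pointwise $C^{1,1}$ regularity at every boundary point, and then to run a Schauder-type iteration. First I would fix a boundary point, say $0\in\partial\Omega$, with $\nabla u(0)=0\in\partial\Omega'$, and normalize so that the inner normals to $\partial\Omega$ at $0$ and to $\partial\Omega'$ at $0$ are both $e_n$. Because $\partial\Omega$ is $C^{1,\beta}$, near $0$ it is trapped between two paraboloids $\{x_n \geq -C|x'|^{1+\beta}\}$ from inside and a supporting hyperplane from outside; the same holds for $\partial\Omega'$. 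The key geometric input is that $u$ is a Brenier (hence Alexandrov) solution whose gradient maps $\Omega$ onto $\Omega'$, so sections $S_h(0)=\{u < \ell_0 + h\}$ (where $\ell_0$ is a supporting plane at $0$) are convex sets whose image $\nabla u(S_h(0))$ is, up to the Monge--Amp\`ere measure bounds $C^{-1}\leq g/g'(\nabla u)\leq C$, comparable in volume. The heart of the matter is to show these sections are \emph{balanced}: they contain and are contained in ellipsoids of comparable eccentricity, with the $x_n$-direction scaling like $h$ and the tangential directions like $h^{1/2}$, i.e. $u$ has a pointwise $C^{1,1}$ expansion $u(x) = \ell_0(x) + O(|x|^2)$ at $0$, with a bound depending only on $n,\alpha,\beta$, the ellipticity constants, and the $C^{1,\beta}$ norms of the boundaries.

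To establish this balancing I would argue by a blow-up/compactness contradiction, which is exactly where the paper's monotonicity formula enters. Suppose no such uniform $C^{1,1}$ bound holds; then there is a sequence of normalized solutions $u_k$ on domains $\Omega_k, \Omega_k'$ whose sections at the origin degenerate — becoming arbitrarily eccentric. Rescaling by the (John ellipsoid of the) section $S_{h_k}(0)$ and passing to a limit, one obtains, after using the $C^{1,1-\eps}$ compactness to control the limit, a global solution $u_\infty$ on a limiting convex domain; because the original boundaries are $C^{1,\beta}$ with $\beta>0$, the rescaling kills the curvature term $|x'|^{1+\beta}$ in the limit, so $\Omega_\infty$ and $\Omega_\infty'$ are \emph{half-spaces} (or all of $\R^n$), and $u_\infty$ solves $\det D^2 u_\infty = $ const with affine/linear boundary data. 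Here the monotonicity formula forces the blow-up $u_\infty$ to be homogeneous of degree $2$; combined with the half-space geometry and the Pogorelov-type classification (a global convex solution of constant Monge--Amp\`ere on a half-space mapping to a half-space, with degree-$2$ homogeneity), $u_\infty$ must be a quadratic polynomial, hence its sections are honest ellipsoids — contradicting the assumed degeneration. This yields: there exist $h_0>0$ and $C$ such that every section $S_h(0)$, $h\leq h_0$, is comparable to an ellipsoid $E_h$ with $E_h \sim h^{1/2}(\text{tangential}) \times h(\text{normal})$, i.e. the pointwise $C^{1,1}$ bound at the boundary, uniformly over boundary points.

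With uniform pointwise $C^{1,1}$ in hand (and, from Theorem~\ref{thm: INTROC1aW2pregularity}, interior $C^{1,1-\eps}$ plus $W^{2,p}$, so $D^2u$ is a genuine function a.e.), I would finish with a Schauder argument in good coordinates. Rescale a section $S_h(x_0)$ at a boundary point $x_0$ by its balanced ellipsoid to a set comparable to the unit ball (with $x_0\mapsto 0$, boundary portion $C^{1,\beta}$-close to a hyperplane with norm controlled after rescaling by $h^{\beta/2}$), so the normalized solution $\tilde u_h$ solves a Monge--Amp\`ere equation with right-hand side $\tilde g/\tilde g'(\nabla \tilde u_h)$ that is $C^{\alpha}$ with norm controlled in terms of $h^{\alpha/2}$, and with the second boundary condition against a domain whose boundary is flat up to an error $O(h^{\beta/2})$. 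Linearizing about the reference quadratic and applying the boundary Schauder estimate for the oblique-derivative (or second boundary value) problem for the linearized Monge--Amp\`ere operator — uniformly elliptic thanks to the balancing — one gets a $C^{2,\gamma}$ estimate for $\tilde u_h$ at $0$ with $\gamma=\min(\alpha,\beta)$: the $\alpha$ comes from the density H\"older norm, the $\beta$ from the boundary H\"older norm, and one takes the worse. Unwinding the rescaling produces the pointwise $C^{2,\gamma}$ estimate for $u$ at $x_0$, uniform in $x_0$; together with the analogous (in fact classical, via Caffarelli) interior estimate and a standard covering/patching argument this gives $u\in C^{2,\gamma}(\overline\Omega)$. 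The main obstacle is the balancing step: making the compactness argument rigorous requires knowing that the rescaled solutions do not lose the Monge--Amp\`ere measure information in the limit and that the limiting domains are genuinely flat — this is precisely where one must use both the global $C^{1,1-\eps}$ estimate (to get convergence of the potentials and their gradients) and the monotonicity formula (to pin down the homogeneity of the blow-up and rule out degenerate limits).
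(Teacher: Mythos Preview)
Your overall blueprint---blow up at a boundary point, use the $C^{1,1-\eps}$ regularity to control the normalizing matrices, invoke the monotonicity formula to force the limit to be degree-$2$ homogeneous, and observe that the $C^{1,\beta}$ boundary flattens to a half-space under blow-up---is the right idea and matches the paper. But two concrete points are wrong as stated, and a third is left vague where the paper is specific.

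First, your normalization ``the inner normals to $\partial\Omega$ at $0$ and to $\partial\Omega'$ at $0$ are both $e_n$'' presupposes obliqueness, i.e.\ $\nu_\Omega\cdot\nu_{\Omega'}>0$. Under the natural affine action ($A$ on the domain, $A^{-T}$ on the target) you cannot in general align both normals with $e_n$ unless they already pair positively. In the paper obliqueness is precisely the content of Lemma~\ref{lem: obliqueness}: one assumes for contradiction that $l_\Omega\cdot l_{\Omega'}=0$, takes a blow-up, uses Lemma~\ref{lem: blow-up-is-half-space} (which needs only $\|M_h\|\ll h^{-\eps}$ from the $C^{1,1-\eps}$ estimate) to show both limit domains are half-spaces $H,H'$, and then observes that a homogeneous OT map between half-spaces forces $l_H\cdot l_{H'}>0$, while the quantity $(l_\Omega\circ A_h^{-1})\cdot(l_{\Omega'}\circ A_h)$ passes to the limit as $0$---contradiction. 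Your compactness argument is morally the same device, but as written it assumes the conclusion rather than proving it.

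Second, your described section shape ``$x_n$-direction scaling like $h$ and tangential directions like $h^{1/2}$'' is the Dirichlet-problem shape (where $u\sim x_n$ near the boundary), not the second-boundary-value shape. For OT with obliqueness the correct ``balanced'' section is isotropic, $S_h\sim B_{Ch^{1/2}}$ in all directions; pointwise $C^{1,1}$ means $u$ is trapped between two paraboloids, and this is what a blow-up-to-quadratic argument actually yields.

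Third, the paper does not build a Schauder iteration from scratch. Once obliqueness is in hand it defers to \cite[Section 6]{Chen-Liu-Wang}, with one explicit modification: because $\partial\Omega$ is only $C^{1,\beta}$ (not $C^{1,1}$), the bound $a_h\le h^{1-\eps}$ in their Lemma~6.2 weakens to $a_h\le h^{(1+\beta)/2-\eps}$, which forces $\delta<\beta/2$ in the barrier construction and is exactly where the exponent $\gamma=\min(\alpha,\beta)$ enters. Your Schauder sketch would need to reproduce this (or an equivalent) modification to close.
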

See Theorem~\ref{thm: global-C-2-a-regularity} for the more precise and quantitative statement. 

The exponents in both Theorem~\ref{thm: INTROC1aW2pregularity} and ~\ref{thm: INTRO-C-2-a-regularity} are optimal. Inparticular, it is known that for general convex domains, solutions of ~\eqref{eq: OTIntro} can fail to be globally $C^{1, 1}$. A concrete example of this failure was first explicitly exhibited in \cite{JMPS}. Given this, we study the pointwise $C^{1, 1}$-regularity of $u$ at boundary points and relate it to the geometry of tangent cones of $\Omega$ and $\Omega'$ at the boundary points; see Theorem~\ref{thm: non-round},~\ref{thm: strong-oblique-implies-round}, ~\ref{thm: 2D-case-4-round}, and ~\ref{thm: round-degenerate-density}. We define a property called {\em roundness}, which roughly says that the solution exhibits homogeneous behaviour at the level of its sub-level sets. If $g, g'$ are non-degenerate, roundness is equivalent to being pointwise $C^{1, 1}$, and thus our results can be used to study when pointwise $C^{1, 1}$-regularity holds. For convex polytopes in $\mathbb{R}^2$, we completely characterize the pointwise $C^{1,1}$-regularity of solutions to~\eqref{eq: OTIntro} when $g=g'=1$; see Theorem~\ref{thm: planar-C-1-1-reg-characterization}. More generally, we study the regularity properties of optimal transport maps with degenerate, homogeneous densities in arbitrary convex domains, motivated in part by recent progress in K\"ahler geometry \cite{Collins-Li, Collins-Tong-Yau, Li-intermediate}; see Theorem~\ref{thm: round-degenerate-density} and~\ref{thm: secMixedHomog}.

The main new technical tool we introduce is a monotonicity formula for optimal transport maps on convex domains which is constant on homogeneous solutions of the optimal transport problem; see Section~\ref{sec: monotonicity-formula}. This formula is heavily motivated by the ideas of Kim-McCann \cite{Kim-McCann} and Kim-McCann-Warren \cite{Kim-McCann-Warren}, interpreting optimal transport maps as high-codimension, space-like, area maximizing submanifolds of pseudo-Riemannian manifolds. For recent progress on the regularity theory of optimal transport from this point of view, we refer the reader to the recent work of  Brendle-L\'eger-McCann-Rankin \cite{BLMR}, and Yuan \cite{Yuan}.  Technically, the monotonicity formula leads to short proofs of the uniform density property, and the uniform obliqueness property for optimal transport maps, and therefore, yields a somewhat simplified proof of the main results of \cite{Caffarelli, Chen-Liu-Wang}. 

One of the main applications of the monotoncitiy formula is to deduce that appropriate rescalings of optimal transport maps converge subsequentially to homogeneous optimal transport maps, which play the role of {\em tangent cones} in the context of geometric PDEs.  This allows us to study the regularity of optimal transport maps through the properties of their ``tangent cones". 

 The outline of the paper is as follows.  Section~\ref{sec: preliminaries} recounts the basic notions of optimal transportation and convex geometry that we will need.  Section~\ref{sec: monotonicity-formula} contains the proof of the key monotonicity formula. Section~\ref{Sec: homogeneity-of-blow-ups} proves that appropriate rescalings of optimal transport maps converge to homogeneous optimal transport maps.  In Section~\ref{sec: C-1-1-eps-regularity} we prove Theorems~\ref{thm: INTROC1aW2pregularity} and~\ref{thm: INTRO-C-2-a-regularity}; see Theorem~\ref{thm: global-C-1-1-eps-regularity}, ~\ref{thm: global-W-2-p-regularity}, and Theorem~\ref{thm: global-C-2-a-regularity} for the more precise statement that we prove.

 In the remainder of the paper, we apply our techniques to study the regularity properties of optimal transport maps with possibly degenerate densities, on general convex domains.  In Section~\ref{sec: roundness}, we study the shapes of sections of optimal transport maps between cones equipped with homogeneous densities.  We introduce a notion of ``roundness" for sections, which can be viewed as an generalization of $C^{1,1}$-regularity.  We show that roundness is related to the existence of homogeneous optimal transport maps, and we give conditions on the domains $\Omega, \Omega'$ guaranteeing the roundness of sections.  For planar polygonal domains with uniform density, we completely characterize optimal transport maps with round sections. 

Homogeneous optimal transport maps have recently been of independent interest due to connections to the existence of complete Calabi-Yau metrics \cite{Collins-Li, Collins-Tong-Yau} and a problem of Tian-Yau \cite{Tian-Yau, YauICM}.  By solving a free-boundary Monge-Amp\`ere equation, the authors and Yau \cite{Collins-Tong-Yau} showed the existence of homogeneous solutions of the optimal transport problem when $\Omega$ is a strict cone, $g=1$ and $\Omega'= \{y \in \mathbb{R}^n: y_n>0\}$ is a half-space, equipped with the density $g'(y)=y_n^{k}$.  The first author and Firester \cite{Collins-Firester} extended these techniques to solve a rather general class of free-boundary Monge-Amp\`ere equations, which in particular yields the existence of optimal transport maps in the above setting, but with $g(x)$ a general, non-negative homogeneous density which can degenerate on $\del \Omega$.

Finally, in Section~\ref{sec: flat-side} we show that the presence of a flat-side in the boundary can lead to solutions exhibiting mixed homogeneity.  These results build in an essential way on the work of Jhaveri-Savin \cite{Jhaveri-Savin} who studied the regularity properties of optimal transport maps with degenerate densities between half-spaces.  

\bigskip

\noindent{\bf Acknowledgements:} We are grateful to S. Brendle, R. J. McCann, N. McCleerey, O. Savin, and H. Yu for helpful conversations. 

Parts of this work was done while we were visiting the Centre de Recherche Mathematiques in Montreal as part of the {\em Geometric Analysis} program in summer of 2024, and while the second author was a postdoc at Harvard's Center of Mathematical Sciences and Applications, and at SLMath (formerly MSRI) as part of the program {\em Special geometric structures and analysis} during fall of 2024. We would like to thank these institutes for their support during various phases of this research. 

T.C.C. is supported in part by NSERC Discovery grant RGPIN-2024-518857, and NSF CAREER grant DMS-1944952. F.T. is supported in part by NSERC Discovery grant RGPIN-2025-06760 and NSF grant DMS-1928930.  

\section{Preliminaries}\label{sec: preliminaries}
Let us describe the optimal transport problem. Given domains $\Omega, \Omega'\subset \mathbb R^n$ equipped with measure $\mu, \nu$ respectively with $\mu(\Omega) = \nu(\Omega')$. An optimal transport map (with quadratic cost) is a measurable map $T:\Omega\to \Omega'$ that satisfy $T_{\sharp}\mu = \nu$ and that minimizes the cost function 
\[c(T) := -\int_{\Omega}x\cdot T(x)d\mu(x)\]
amongst all measurable maps that pushforward $\mu$ to $\nu$. A fundamental theorem of Brenier \cite{Brenier} says that a unique optimal transport map $T$ always exists and it is given by the gradient of a convex potential $T(x) = \nabla u(x)$ for $\mu$-almost every $x$. Moreover, the inverse optimal transport map $T^{-1}:\Omega'\to \Omega$ is given by the gradient of the Legendre transform of $u$, defined by
\[v(y) := \sup_{x\in\Omega}(\langle x, y \rangle-u(x)). \] 
When $\Omega$ and $\Omega'$ are convex, and the measures $d\mu = g(x)dx$, $d\nu = g'(y)dy$ are absolutely continuous with respect to the Lebesgue measure, Caffarelli \cite{Caffarelli} proved that $u$ satisfy the following Monge-Ampere equation in the Alexandrov sense
\begin{equation}\label{eq: OT-MA-equation}
	\begin{cases}
		\det D^2u(x) = \frac{g(x)}{g'(\nabla u(x))} \text{ for }x\in \Omega\\
		\nabla u(\Omega) = \Omega'.
	\end{cases}
\end{equation}
The convexity of the domains plays an important role in Caffarelli's result, as he observed that if the domains are not convex, then Brenier's solutions can fail to be an Alexandrov solution to \eqref{eq: OT-MA-equation}, and $u$ may not even be $C^1$. 

In this paper, we will always assume the domains $(\Omega, \Omega')$ are convex and we will take the approach of using the equation~\eqref{eq: OT-MA-equation} as the definition of an optimal transport map. This has the advantage that it makes sense even if the domain and target are non-compact and have infinite measure, which is necessary to be able to talk about blow-ups. Moreover, we always assume the measures $(g(x)dx, g'(y)dy)$ satisfy a {\em volume doubling condition}, which is the crucial assumption required for Caffarelli's $C^{1, \delta}$-regularity theory to hold \cite{Caffarelli2, Jhaveri-Savin}. For the remainder of this paper, we define an optimal transport map as follows. 

\begin{defn}
    An {\bf optimal transport map} (or more precisely, an optimal transport map between convex domains equipped with doubling measures) will be given by the data $((\Omega, \mu), (\Omega', \nu), u, v)$ where
    \begin{enumerate}
        \item $(\Omega, \Omega')\subset (\mathbb R^n_x, \mathbb R^n_y)$ are two open (not necessarily compact) convex sets. 
        \item $\mu$ and $\nu$ are measures with support $\overline\Omega$ and $\overline{\Omega'}$ respectively such that
        \begin{enumerate}
            \item They are absolutely continuous with respect to the Lebesgue measure, and we set $d\mu = g(x)dx$, and $d\nu = g'(y)dy$. 
            \item The measures $\mu$ (and $\nu$) are doubling, that is there exist constant $C>1$ such that for any ellipsoid $E\subset \mathbb R^n$ centered at a point $x\in\overline\Omega$ (or centered at $y\in\overline{\Omega'}$), we have
            \[\mu(E)\leq C\mu(\frac{1}{2}E) \left(\text{or  }\nu(E)\leq C\nu(\frac{1}{2}E)\right). \]
            The constant $C>1$ is called the doubling constant of $(\mu, \nu)$. 
        \end{enumerate}
        \item $(u, v)$ are a pair of strictly convex functions on $(\overline\Omega, \overline\Omega')$ such that
        \begin{enumerate}
            \item $\nabla u(\Omega) = \Omega'$ and $\nabla v(\Omega') = \Omega$. 
            \item Let $(\bar u, \bar v)$ be the minimal convex extensions of $(u, v)$,
            \[\bar u(x) := \sup \{l(x): l \text{ is tangent to } u \text{ at some point }x_0\in \Omega\},\]
            \[\bar v(y) := \sup \{l(y): l \text{ is tangent to } v \text{ at some point }y_0\in \Omega'\}. \]
            Then $\bar u, \bar v\in C^{1, \alpha}_{loc}(\mathbb R^n)$ for some $\alpha\in (0, 1)$, and $(\bar u, \bar v) = (v^{\star}, u^{\star})$ where $(v^{\star}, u^{\star})$ are the Legendre transforms of $(v, u)$.   
            \item $(\bar u, \bar v)$ are Alexandrov solutions to the Monge-Amp\'ere equations 
            \begin{equation}
                \det D^2\bar u(x) = \frac{g(x)}{g'(\nabla \bar u(x))}\chi_{\Omega}(x)
            \end{equation}
            and
            \begin{equation}
                \det D^2\bar v(y) = \frac{g'(y)}{g(\nabla \bar v(y))}\chi_{\Omega'}(y). 
            \end{equation}
            respectively.
            \end{enumerate}
    \end{enumerate}
\end{defn}

\begin{remark}
    By \cite[Theorem 1.1]{Jhaveri-Savin} (also see \cite[Remark 2.1]{Savin-Yu}), the properties for $u, v$ are always satisfied by optimal transport maps when the measures $(\mu, \nu)$ are doubling measures supported on convex domains. 
\end{remark}

Given an optimal transport map $((\Omega, d\mu), (\Omega', d\nu), u, v)$, we will always use $x$ to denote the coordinate of the domain and $y$ for the coordinate of the target, which are dual to each other. The following transformations act on the space of optimal transport maps. 
\begin{enumerate}
    \item {\bf Shift}: Given $c\in \mathbb R$, $((\Omega, d\mu), (\Omega', d\nu), u+c, v-c)$ is another optimal transport map. 
    \item {\bf Translation}: Given $x_0\in \mathbb R^n$, there is a shift transformation
    \[\tau_{x_0}((\Omega, d\mu), (\Omega', d\nu), u, v) = ((\Omega+x_0, g(x-x_0)dx), (\Omega', g'(y)dy), u(x-x_0), v(y)+\langle x_0, y\rangle)\]
    and similarly given $y_0\in (\mathbb R^n)'$, there is a shift given by
    \[\tau'_{y_0}((\Omega, d\mu), (\Omega', d\nu), u, v) = ((\Omega, g(x)dx), (\Omega'+y_0, g'(y-y_0)dy), u(x)+\langle x, y_0\rangle, v(y-y_0))\]
    \item {\bf Scaling}: Given $\lambda>0$ there is a scaling transformation
    \[\lambda\cdot ((\Omega, d\mu), (\Omega', d\nu), u, v) = ((\Omega, d\mu), (\lambda^{-1}\Omega', \lambda^{-1}_{\sharp}d\nu), \frac{u(x)}{\lambda}, \frac{v(\lambda y)}{\lambda}).\]
    \item {\bf Affine transformations}: Given a matrix $A\in GL(n, \mathbb R)$, it acts on the space of optimal transport maps by
    \[A\cdot ((\Omega, d\mu), (\Omega', d\nu), u, v) = ((A(\Omega), A_{\sharp}d\mu), (A^{-t}(\Omega'), A^{-t}_{\sharp}d\nu), u\circ A^{-1}, v\circ A^t). \]
\end{enumerate}

\subsection{Basics of Convex geometry}
In this section, we collect some basic concepts of convex geometry that will be used in later sections. 

The following lemma regarding the shape of bounded convex sets is well-known. (see \cite{Guetierrez} for a proof.)
\begin{prop}[John's Lemma]\label{prop: John's Lemma}
    Given any bounded convex set $\Omega$ with non-empty interior and barycenter at the origin. There exist a unique ellipsoid $E$ centered at the origin such that 
    \[E\subset \Omega\subset n^{\frac 3 2}E.\]
    Such a $E$ is called the {\bf John ellipsoid} of $\Omega$. 
\end{prop}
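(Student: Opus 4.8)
The plan is to take $E$ to be the \emph{ellipsoid of maximal volume among all ellipsoids centered at the origin contained in $\Omega$}, and to prove existence, uniqueness, and the two-sided inclusion in turn; the hypothesis that the barycenter is at the origin enters only in the last step.

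For existence and uniqueness I would parametrize origin-centered ellipsoids by positive semidefinite symmetric matrices $A$, the ellipsoid attached to $A$ being $A^{1/2}(B_1)$ (with $B_1$ the closed Euclidean unit ball) and its volume a fixed multiple of $\sqrt{\det A}$. Since $\sup_{y\in A^{1/2}(B_1)}\langle y,u\rangle=\sqrt{\langle Au,u\rangle}$, the inclusion $A^{1/2}(B_1)\subseteq\overline\Omega$ is equivalent to the family of linear inequalities $\langle Au,u\rangle\le h_\Omega(u)^2$, $u\in S^{n-1}$, where $h_\Omega$ is the support function. The feasible set for $A$ is therefore closed and \emph{convex}; it is bounded because $\Omega$ is bounded and has nonempty interior because $0$ lies in the interior of $\Omega$. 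Hence $\det A$ attains a strictly positive maximum at some $A^\ast$, and $E:=(A^\ast)^{1/2}(B_1)$ satisfies $E\subseteq\overline\Omega$. Uniqueness follows from the strict concavity of $A\mapsto\log\det A$ on positive definite matrices: two distinct maximizers $A_1,A_2$ would have $\tfrac12(A_1+A_2)$ feasible with strictly larger $\log\det$, a contradiction.

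For the inclusion $\Omega\subseteq n^{3/2}E$ I would first normalize by an affine transformation; since such a transformation is linear, it fixes the origin, preserves the barycenter condition, and sends ellipsoids to ellipsoids, so we may assume $E=B_1$, i.e. $B_1\subseteq\Omega$, the body $\Omega$ still has barycenter $0$, and $B_1$ is the maximal origin-centered ellipsoid in $\Omega$. Writing the first-order optimality condition at $A^\ast=I$ — and noting that, unlike the usual John decomposition, there is no translation degree of freedom here — produces contact points $u_1,\dots,u_m\in S^{n-1}\cap\partial\Omega$ and weights $c_i>0$ with $\sum_i c_i\,u_i\otimes u_i=I$; taking traces gives $\sum_i c_i=n$. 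Because $B_1\subseteq\Omega$ and the two bodies touch at each $u_i$, the supporting hyperplane $\{\langle x,u_i\rangle=1\}$ of $B_1$ also supports $\Omega$, so $\sup_{x\in\Omega}\langle x,u_i\rangle=1$. The remaining point is to bound $\langle x,u_i\rangle$ from \emph{below} on $\Omega$, and here I would invoke the classical barycenter lemma: if $K$ is a convex body with barycenter at the origin, then $-\inf_{x\in K}\langle x,w\rangle\le n\,\sup_{x\in K}\langle x,w\rangle$ for every $w\in S^{n-1}$. Its proof runs through the cross-sectional profile $f(t)=\vol_{n-1}\big(K\cap\{\langle x,w\rangle=t\}\big)$: Brunn--Minkowski makes $f^{1/(n-1)}$ concave, the barycenter condition reads $\int t\,f(t)\,dt=0$, and among concave profiles with vanishing first moment the ratio $\sup/(-\inf)$ is maximized by the profile of a cone, which yields precisely the constant $n$. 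Applying the lemma to $\Omega$ with $w=u_i$ gives $\inf_{x\in\Omega}\langle x,u_i\rangle\ge-n$, so $|\langle x,u_i\rangle|\le n$ on $\Omega$; consequently, for any $x\in\Omega$,
\[
|x|^2=\langle x,Ix\rangle=\sum_i c_i\,\langle x,u_i\rangle^2\le n^2\sum_i c_i=n^3,
\]
which is exactly $\Omega\subseteq n^{3/2}B_1=n^{3/2}E$.

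The two places that need care are the John-type decomposition in the \emph{constrained} (origin-centered) setting — which requires either a Lagrange-multiplier/Slater argument for the semi-infinite convex program above, or the hands-on optimization argument of John and Ball — and the sharp barycenter lemma, whose extremal-profile step is the only genuinely delicate point; everything else is elementary. As the authors note, the statement is classical and a complete proof may be found in the reference cited above.
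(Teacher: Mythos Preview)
The paper does not prove this proposition; it simply states it as well known and refers the reader to Guti\'errez's monograph. Your outline is a correct and standard route to the result: existence and uniqueness of the maximal-volume centered ellipsoid via the convex program in $A$ and strict concavity of $\log\det$, the constrained John decomposition $\sum_i c_i\,u_i\otimes u_i=I$ at the optimum, and the barycenter inequality $-\inf_{\Omega}\langle x,w\rangle\le n\sup_{\Omega}\langle x,w\rangle$ to close the estimate $|x|^2\le n^2\sum_i c_i=n^3$. The two caveats you flag (the KKT/John decomposition in the centered setting and the extremal-profile step in the barycenter lemma) are exactly the places where a careful argument is needed, and both are classical; nothing in your plan is wrong or incomplete. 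One minor remark: the uniqueness asserted in the statement should be read as uniqueness of the \emph{maximal inscribed} centered ellipsoid (which is what you prove), not literal uniqueness among all centered ellipsoids satisfying the two-sided inclusion.
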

Given two subsets $X, Y\subset \mathbb R^n$, the {\bf Hausdorff distance} between them is defined as 
\[d_H(X, Y) := \sup_{x\in X, y\in Y} \{d(x, Y) , d(y, X)\}\]
The Hausdorff distance induces a topology on the space of subsets of $\mathbb R^n$ called the Hausdorff topology. The following proposition is well known. (see \cite{Schneider} for a proof.)
\begin{prop}
    The class of compact convex sets in $\overline{B_R(0)}$ is compact with respect to the Hausdorff topology. Moreover, the volume function is continuous on the class of convex subsets with respect to the Hausdorff topology. 
\end{prop}

\begin{defn}
    We say that a sequence of subsets $\Omega_i\subset\mathbb R^n$ {\bf converges locally in the Hausdorff sense} to a subset $\Omega_{\infty}$ if for any $R>1$, we have \[d_H(\Omega_i\cap \overline{B_R(0)}, \Omega_\infty\cap \overline{B_R(0)})\to 0\] as $i\to \infty$. 
\end{defn}

A {\bf convex cone} $\mathtt C\subset \mathbb R^n$ is a convex set which is closed under scalar multiplication by positive real numbers. Given a convex cone $\mathtt C\subset \mathbb R^n$, there is a {\bf dual cone} $\mathtt C^{\circ}$ defined by
    \[\mathtt C^{\circ} :=\{y\in \mathbb R^n: \langle x, y\rangle\geq 0\text{ for all }x\in\mathtt C\}. \]
If $\mathtt C$ is a closed convex cone, then $\mathtt C^{\circ \circ} = \mathtt C$. 
\begin{defn}
    We say that a convex cone $\mathtt C\subset \mathbb R^n$ is a {\bf strict cone} if it contains no infinite lines, or equivalently if $\mathtt C^{\circ}$ dimension $n$. 
\end{defn}

We recall the notion of the tangent cone of a convex set at a point on its boundary. 
\begin{defn}
    Let $\Omega$ be an open convex set and $0\in\partial\Omega$. Then the {\bf tangent cone} of $\Omega$ at $0$ is given by the closed convex cone
    \[\mathtt C:= \overline{\{tv: t\geq 0, v\in\Omega\}}. \]
\end{defn}

\subsection{Sections, centered sections, and extrinsic balls}
Let $((\Omega, g(x)dx), (\Omega', g'(y)dy), u, v)$ be an optimal transport map. Following Caffarelli \cite{Caffarelli}, for any $x_0\in \overline\Omega$ and $h>0$, we define the {\bf section of $u$ at $x_0$ of height $h$} to be the convex set
\[S_h(u, x_0) := \{x\in\Omega: u(x)-u(x_0)-\nabla u(x_0)(x-x_0)\leq h\}.\]
We also define a {\bf centered section} of $u$ at $x_0\in\overline\Omega$ to be
\[S^c_h(u, x_0) := \{x\in \mathbb R^n:\bar u(x)-\bar u(x_0)-p\cdot (x-x_0)\leq h\}\]
where recall that $\bar u$ is the minimal convex extension of $u$, and $p\in \mathbb R^n$ is chosen so that $S^c_h(u, x_0)$ has center of mass $x_0$. By \cite{Caffarelli3}, if the graph of $\bar u$ contains no line, then centered sections always exist. 

Given a centered section $S = S^c_h(u, x_0)$, one can define the normalized section by
\[\tilde S = A(S)\]
where $A$ is the (unique) symmetric positive definite matrix such that $A(\mathcal E) = B_1(0)$ where $\mathcal E$ is the John ellipsoid of $S$. We call $A$ the {\bf normalization matrix} of $S$, and it satisfies $(\det A)|S|\sim 1$. We also define the normalized potential $\tilde u:\tilde S\to \mathbb R$ by
\[\tilde u(\tilde x) = \frac{(\bar u-l)(A^{-1}\tilde x)}{h}.\]
where $l(x) = u(x_0)+p\cdot (x-x_0)$. It follows that $\tilde S = \{\tilde u<1\}$, and the pair $(\tilde S, \tilde u)$ is a normalized pair. Moreover, $\nabla \tilde u:\tilde\Omega\to\tilde\Omega'$ where $\tilde\Omega = A(\Omega)$ and $\tilde\Omega' = hA^{-1}(\Omega')$.  

Furthermore, the following properties were established for doubling measures by Caffarelli \cite{Caffarelli3} and Jhaveri-Savin \cite{Jhaveri-Savin}.

\begin{prop}\label{prop: properties-of-sections}\cite[Section 3]{Jhaveri-Savin}
There exist constant $c>0$ depends only on dimension, and $C\geq 1$ depending additionally on the doubling constant of $g(x)$ and $g'(y)$ such that
\begin{enumerate}
    \item $B_{C^{-1}}\subset \nabla\tilde u(\tilde S)\subset B_C$
    \[\implies C^{-1}h(S_h^c(u, x_0)-x_0)^{\circ}\subset \nabla (u-l)(S_h^c(u, x_0))\subset Ch(S_h^c(u, x_0)-x_0)^{\circ}.\]
    \item $C^{-1}h^n\leq|S_h^c(u, x_0)||\nabla u(S_h^c(u, x_0))|\leq Ch^n$. 
    \item $S_{ch}^c(u, x_0)\cap \overline\Omega\subset S_h(x_0)\subset S_{Ch}^c(u, x_0)\cap \overline\Omega$. 
    \item $S_{C^{-1}h}(v, \nabla u(x_0))\subset \nabla u(S_h(u, x_0))\subset S_{Ch}(v, \nabla u(x_0))$. 
\end{enumerate}
\end{prop}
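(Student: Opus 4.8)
The plan is to prove (1) first, in the equivalent ``normalized'' form, and then to deduce (2), (3), (4) from it together with the Legendre duality between $u$ and $v$. The polar-body inclusions inside (1) are merely a restatement of the normalization: writing $l(x)=u(x_0)+p\cdot(x-x_0)$, one computes directly from $\tilde u(\tilde x)=\tfrac1h(\bar u-l)(A^{-1}\tilde x)$ that $\nabla\tilde u(\tilde x)=\tfrac1hA^{-1}\nabla(u-l)(A^{-1}\tilde x)$, so $B_{C^{-1}}\subset\nabla\tilde u(\tilde S)\subset B_C$ is the same as $hA(B_{C^{-1}})\subset\nabla(u-l)(S^c_h(u,x_0))\subset hA(B_C)$. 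Since $A$ carries the John ellipsoid $\mathcal E$ of $S^c_h(u,x_0)$ to $B_1$, the ellipsoid $A(B_1)$ is the polar of the centered copy of $\mathcal E$, and by John's Lemma (Proposition~\ref{prop: John's Lemma}) $\mathcal E$ and $S^c_h(u,x_0)-x_0$ agree up to a dimensional dilation; hence $A(B_1)\sim(S^c_h(u,x_0)-x_0)^\circ$ and (1) is equivalent to $B_{C^{-1}}\subset\nabla\tilde u(\tilde S)\subset B_C$, where after a translation $\tilde x_0:=Ax_0$ is the barycenter of $\tilde S$, $B_1(\tilde x_0)\subset\tilde S\subset B_{n^{3/2}}(\tilde x_0)$, $\tilde u(\tilde x_0)=0$, and $\tilde u=1$ on $\partial\tilde S$.

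The left inclusion $B_{C^{-1}}\subset\nabla\tilde u(\tilde S)$ is elementary: $\tilde u-1$ is convex, vanishes on $\partial\tilde S$, and equals $-1$ at $\tilde x_0$, so for any $\xi$ with $|\xi|<1/\diam(\tilde S)$ the convex function $(\tilde u-1)-\xi\cdot(\cdot-\tilde x_0)$ has strictly smaller value at $\tilde x_0$ than anywhere on $\partial\tilde S$, hence attains an interior minimum, which gives $\xi\in\nabla\tilde u(\tilde S)$; since $\diam(\tilde S)\le 2n^{3/2}$ this yields the inclusion. The right inclusion $\nabla\tilde u(\tilde S)\subset B_C$ is the technical core: a subgradient of $\tilde u$ on $\tilde S$ of norm $\gg 1$ forces $\min_{\tilde S}\tilde u$ to be very negative, so the point is to bound $|\min_{\tilde S}\tilde u|$ for normalized centered sections. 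For $g,g'$ bounded above and below this is Caffarelli's estimate \cite{Caffarelli3}; for doubling densities it is \cite[Section~3]{Jhaveri-Savin}, and the mechanism is to normalize \emph{simultaneously} the section $S^c_h(u,x_0)$ and the dual centered section of $v$ at $y_0:=\nabla u(x_0)$ and to balance the two Aleksandrov maximum-principle estimates against each other --- the doubling hypothesis guaranteeing that the two normalizations are compatible (the heights stay comparable and the normalized target $hA^{-1}(\Omega')$ is trapped in a ball of controlled radius), which bounds $|\min_{\tilde S}\tilde u|$ and hence $\nabla\tilde u(\tilde S)$. We take this estimate from \cite{Caffarelli3,Jhaveri-Savin}.

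Granting (1), the remaining items follow. Estimate (2) is a Jacobian computation: by (1) we have $|\tilde S|\sim 1$ and $|\nabla\tilde u(\tilde S)|\sim 1$, while $|\tilde S|=(\det A)|S^c_h(u,x_0)|$ and $|\nabla\tilde u(\tilde S)|=h^{-n}(\det A)^{-1}|\nabla u(S^c_h(u,x_0))|$, so $|S^c_h(u,x_0)|\,|\nabla u(S^c_h(u,x_0))|\sim h^n$. For (3), note that (1) applied at each height gives $\nabla\tilde u(\tilde x_0)\in B_C$, and since $(l-l_{x_0})(x)=h\,\nabla\tilde u(\tilde x_0)\cdot(\tilde x_0-\tilde x)$ where $l_{x_0}(x)=u(x_0)+\nabla u(x_0)\cdot(x-x_0)$ is the supporting plane at $x_0$, this says $|l-l_{x_0}|\le Ch$ on $S^c_h(u,x_0)$; the inclusion $S^c_{ch}(u,x_0)\cap\overline\Omega\subset S_h(u,x_0)$ then drops out immediately ($c$ small), and the reverse inclusion $S_h(u,x_0)\subset S^c_{Ch}(u,x_0)\cap\overline\Omega$ follows from the standard comparison of min-centered and barycenter-centered sections of comparable heights for doubling measures (Caffarelli, Jhaveri--Savin). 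Finally, for (4), with $y_0=\nabla u(x_0)$ and any $y=\nabla u(x)$ (so $x=\nabla v(y)$), the Legendre identity $u(x)+v(y)=\langle x,y\rangle$ gives
\[
\bigl(u(x)-u(x_0)-y_0\cdot(x-x_0)\bigr)+\bigl(v(y)-v(y_0)-x_0\cdot(y-y_0)\bigr)=\langle x-x_0,\,y-y_0\rangle,
\]
with both summands nonnegative; if $x\in S_h(u,x_0)$ then (by (3) and the polar form of (1)) one has $x-x_0\in S^c_{Ch}(u,x_0)-x_0$ and $y-y_0\in C'h(S^c_{Ch}(u,x_0)-x_0)^\circ$, whence $\langle x-x_0,y-y_0\rangle\le Ch$ and so $y\in S_{Ch}(v,y_0)$; this gives $\nabla u(S_h(u,x_0))\subset S_{Ch}(v,y_0)$, and the symmetric argument (swapping $u,v$, enlarging $C$) gives the reverse inclusion.

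The step I expect to be the main obstacle is the one singled out above: bounding $|\min_{\tilde S}\tilde u|$, equivalently controlling the eccentricity of normalized centered sections, when $g,g'$ are only doubling rather than bounded away from $0$ and $\infty$. Everything else (the change of variables, polar duality, the Legendre identity, the section comparisons) is formal or standard, but this one estimate is where the doubling hypothesis does genuine work, and it is precisely what we import from \cite{Caffarelli3} and \cite{Jhaveri-Savin}.
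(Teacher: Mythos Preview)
The paper does not give a proof of this proposition at all: it is stated with the attribution ``\cite[Section~3]{Jhaveri-Savin}'' and no argument. Your sketch is a correct reconstruction of the standard proof, and you correctly isolate the one genuinely nontrivial ingredient---the bound on $|\min_{\tilde S}\tilde u|$ (equivalently, the right inclusion $\nabla\tilde u(\tilde S)\subset B_C$) for \emph{doubling} rather than bounded densities---as the estimate imported from \cite{Caffarelli3,Jhaveri-Savin}, which is exactly where the paper points. The formal deductions you give for (2), (3), (4) from (1) via the change of variables, the bound $|l-l_{x_0}|\le Ch$ on $S^c_h$, and the Legendre identity are all correct (in (4) one should note that $(S^c_h-x_0)^\circ$ is comparable to a centrally symmetric body since $x_0$ is the barycenter, so differences of elements stay in a dilate; you use this implicitly). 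There is nothing further to compare.
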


In \cite{Caffarelli3, Jhaveri-Savin}, it was also shown that if the measures are doubling, then the centered sections satisfy the following engulfing property. 
\begin{prop}\cite[Lemma 3.3]{Jhaveri-Savin}\label{prop: engulfing}
For any pair of constants $0\leq \underline{t}\leq \overline t\leq 1$, there exists a constant $0<s\leq 1$ such that
\[S_{sh}^c(u, x')\subset \overline tS_h^c(u, x)\]
for all $x\in \overline\Omega$ and $x'\in \underline{t}S_h^c(u, x)\cap \overline\Omega$. Moreover, the constant $s$ depend only on $\underline{t}, \overline t, n$ and the doubling constant of $g$ and $g'$.
\end{prop}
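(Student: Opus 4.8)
The plan is to reduce, by normalization together with elementary convex geometry, to a single substantive input: a uniform modulus of strict convexity for centered sections of optimal transport maps with doubling densities, which is the heart of Caffarelli's $C^{1,\delta}$-theory. First I would normalize: applying the affine, translation (in $x$ and $y$), shift, and scaling operations of Section~\ref{sec: preliminaries}, I may assume $S:=S^c_h(u,x)$ is normalized, namely $B_1\subset S\subset n^{3/2}B_1$, $x=0$ is the center of mass of $S$, $h=1$, the centering slope of $S$ vanishes, and $\bar u(0)=0$; then Proposition~\ref{prop: properties-of-sections} gives $0<c\le C$ depending only on $n$ and the doubling constant with $cB_1\subset\nabla\bar u(S)\subset CB_1$, and hence $|\bar u|\le C$ on $S$. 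The elementary reduction is: for any convex body $K$ with $0$ in its interior, any $p\in K$, and $0<\mu<1$, the inclusion $\mu p+(1-\mu)B_\rho(0)\subset K$ with $\rho=\operatorname{dist}(0,\partial K)$ gives $\operatorname{dist}(\mu p,\partial K)\ge(1-\mu)\operatorname{dist}(0,\partial K)$. Taking $K=\overline t S$, $p=(\overline t/\underline t)x'\in\overline t S$, $\mu=\underline t/\overline t$, and $\overline t B_1\subset\overline t S$, this yields, for every $x'\in\underline t S$,
\[\operatorname{dist}\big(x',\partial(\overline t S)\big)\ \ge\ \overline t-\underline t\]
(the case $\underline t=0$ being immediate; I assume $\underline t<\overline t$, the equality case being degenerate). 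Since $x'$ is the center of mass of $S^c_{s}(u,x')$, that section lies in $\overline B_{d}(x')$ with $d=\diam S^c_{s}(u,x')$; hence it suffices to prove $\diam S^c_{s}(u,x')\le\overline t-\underline t$ whenever $s\le s_0(\overline t-\underline t,n,\text{doubling const})$, for then $S^c_{sh}(u,x')\subset\overline B_{\overline t-\underline t}(x')\subset\overline t S^c_h(u,x)$ after undoing the normalization, with $s_0$ having the asserted dependence.

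So everything reduces to a uniform diameter estimate $\diam S^c_{s}(u,x')\le\omega(s)$ for all $x'\in S$, with $\omega(s)\to0$ as $s\to0$ and $\omega$ depending only on $n$ and the doubling constant. The part of $S^c_{s}(u,x')$ inside $\overline\Omega$ I would control via Proposition~\ref{prop: properties-of-sections}(3), giving $S^c_{s}(u,x')\cap\overline\Omega\subset\{z\in\overline\Omega:\bar u(z)-\bar u(x')-\nabla\bar u(x')\cdot(z-x')\le s/c\}$, combined with a modulus of strict convexity for $\bar u$ at $x'$. Such a modulus comes from Legendre duality applied to $C^{1,\delta}$-regularity of the dual potential: writing $y'=\nabla\bar u(x')$, the bound $\bar v(y)-\bar v(y')-\nabla\bar v(y')\cdot(y-y')\le\Lambda|y-y'|^{1+\delta}$ for $|y-y'|\le r_0$ gives, after taking the supremum defining $\bar u=\bar v^{\star}$,
\[\bar u(z)-\bar u(x')-\nabla\bar u(x')\cdot(z-x')\ \ge\ c\,|z-x'|^{1+1/\delta}\qquad\text{for }|z-x'|\le\rho_0,\]
so that $S^c_{s}(u,x')\cap\overline\Omega\subset\{|z-x'|\le Cs^{\delta/(1+\delta)}\}$ for small $s$. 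The part of $S^c_{s}(u,x')$ outside $\overline\Omega$ I would handle using that the section is centered at $x'\in\overline\Omega$, that $\bar u$ has vanishing Monge--Amp\`ere measure off $\overline\Omega$ (so its graph is ruled there), and the comparable-scale engulfing $S^c_{s}(u,x')\subset S^c_{C}(u,x')\subset B_{R_0}$ (a consequence of Proposition~\ref{prop: properties-of-sections} and the absence of a line in the graph of $\bar u$); these together give $\diam S^c_{s}(u,x')\le\omega(s):=Cs^{\delta/(1+\delta)}$.

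The hard part is not these manipulations but securing the $C^{1,\delta}$-bound for $\bar v$—and dually the strict convexity modulus for $\bar u$—with constants depending only on $n$ and the doubling constant, uniformly over all normalized configurations and all $x'\in S$, including $x'\in\partial\Omega$ and $y'=\nabla\bar u(x')$ near $\partial\Omega'$; this is precisely the content of Caffarelli's global $C^{1,\delta}$ and strict convexity theory for optimal transport between convex domains with doubling densities (\cite{Caffarelli2, Jhaveri-Savin}), only the qualitative form of which is built into our Definition. The uniform modulus $\omega$ must therefore be quoted from that theory, or recovered by compactness: if no such $\omega$ existed one would have normalized optimal transport maps $((\Omega_j,\mu_j),(\Omega'_j,\nu_j),u_j,v_j)$ with uniformly bounded doubling constants, points $x'_j\in\underline t S_j$, and $z_j\in S^c_{s_j}(u_j,x'_j)$ with $|z_j-x'_j|\ge\overline t-\underline t$ and $s_j\to0$; passing to a subsequence using Hausdorff-compactness of normalized convex bodies, the uniform Lipschitz bound for the $\bar u_j$, stability of the Monge--Amp\`ere measure under weak limits, and persistence of the doubling constant in the limit, one obtains a limiting optimal transport map whose potential contains a nondegenerate segment in a sublevel set through the limit point—i.e. fails to be strictly convex there—contradicting strict convexity for doubling measures. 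With $\omega$ in hand, the reduction of the first paragraph finishes the proof, taking $s_0=\big((\overline t-\underline t)/C\big)^{(1+\delta)/\delta}$.
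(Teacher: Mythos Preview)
The paper does not prove this proposition; it simply cites \cite[Lemma~3.3]{Jhaveri-Savin}. So there is no ``paper's own proof'' to compare against, and your attempt must be judged on its own terms and against the argument in the cited reference.

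Your primary route has a circularity problem. You reduce engulfing to a uniform diameter bound on $S^c_s(u,x')$, which you derive from a uniform modulus of strict convexity of $\bar u$, which in turn you obtain from the $C^{1,\delta}$-regularity of $\bar v$. But in the logical development of \cite{Caffarelli3, Jhaveri-Savin}---and in this paper too, see the proof of Proposition~\ref{prop: C1-delta-estimate}---the $C^{1,\delta}$-estimate is \emph{derived from} the engulfing property, not the other way around. So invoking $C^{1,\delta}$ here is not permitted unless you can establish it by an independent route.

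Your compactness alternative is more promising in principle, since qualitative strict convexity of the limit can be established by Caffarelli's localization argument without going through engulfing. But the sketch has gaps: you need to verify that the limit of a sequence of normalized optimal transport maps with uniformly bounded doubling constants is again an optimal transport map with doubling densities (persistence of doubling under weak limits of measures is not automatic), and your treatment of the part of $S^c_s(u,x')$ outside $\overline\Omega$ is hand-wavy---the ruled structure of $\bar u$ off $\overline\Omega$ does not by itself prevent the centered section from extending far in one direction; you need to couple it with the centering condition and the bound on the part inside $\overline\Omega$ more carefully. The actual proof in \cite{Jhaveri-Savin} proceeds more directly from the properties of centered sections recorded in Proposition~\ref{prop: properties-of-sections}, without passing through $C^{1,\delta}$.
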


This engulfing property leads to the following $C^{1, \delta}$ estimate \cite{Caffarelli2, Jhaveri-Savin}. (see also \cite{Savin-Yu})    

\begin{prop}\label{prop: C1-delta-estimate}
        Given an optimal transport map $((\Omega, g(x)dx), (\Omega', g'(y)dy), u, v)$ with $0\in\partial\Omega\cap \partial\Omega'$ and $u(0) = |\nabla u|(0) = 0$. Assume that it satisfies
        \begin{enumerate}
            \item $g$ and $g'$ has doubling constant bounded by $K$.
            \item $S_1^c(u, 0)\subset B_K(0)$ and $S_1^c(v, 0)\subset B_K(0)$. 
        \end{enumerate}
        Then there exist $\delta>0$ depending only on $K$ and $n$ such that
        \[\|\bar u\|_{C^{1, \delta}(B_R)}+\|\bar v\|_{C^{1, \delta}(B_R)}\leq C \]
        for $C$ depending only on $n, K, $ and $R$. 
    \end{prop}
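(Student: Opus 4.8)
The plan is to derive the $C^{1,\delta}$ estimate from the engulfing property of Proposition~\ref{prop: engulfing} via a standard iteration on dyadic centered sections, combined with the volume and inclusion estimates of Proposition~\ref{prop: properties-of-sections}. The first step is to control the shape of the sections $S_h^c(u,0)$ at the single point $0$ as $h \to 0$: one shows that the normalization matrices $A_h$ associated to $S_h^c(u,0)$ satisfy a comparability estimate between consecutive dyadic scales, i.e. there is a constant $\Lambda = \Lambda(n,K)$ such that $S_{h/2}^c(u,0)$ is comparable (after affine normalization) to a $\Lambda$-fraction of $S_h^c(u,0)$. This is where the engulfing property enters: applying Proposition~\ref{prop: engulfing} with $\underline t = 0$, $x' = x = 0$ gives $S_{sh}^c(u,0) \subset \overline{t}\, S_h^c(u,0)$, and iterating together with the volume bound $C^{-1}h^n \le |S_h^c(u,0)|\,|\nabla u(S_h^c(u,0))| \le Ch^n$ forces the sections to shrink at a definite geometric rate — in particular one extracts a power $\delta = \delta(n,K) > 0$ and a constant such that $\mathrm{diam}\, S_h^c(u,0) \le C h^{\delta/(1+\delta)}$ or some such comparable bound. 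The symmetric estimate for $v$ follows identically from hypothesis (2).

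The second step converts this section-decay into the pointwise $C^{1,\delta}$ bound. Since $u(0) = |\nabla u|(0) = 0$, the definition of the centered section gives $\bar u(x) \le h$ whenever $x \in S_h^c(u,0)$, up to the linear correction $p_h\cdot x$; the key point is that $|p_h| \to 0$ as $h \to 0$ (this again uses the inclusion $\nabla(u-l)(S_h^c(u,0)) \subset Ch(S_h^c(u,0))^\circ$ from Proposition~\ref{prop: properties-of-sections}(1), which pins the gradient of $\bar u$ on $S_h^c(u,0)$ to a set of diameter $O(h/\mathrm{diam}\,S_h^c)$). Combining the diameter decay of the sections with this gradient control yields, for $|x|$ small, an estimate of the form $|\nabla \bar u(x) - \nabla\bar u(0)| = |\nabla\bar u(x)| \le C|x|^{\delta}$ for a possibly smaller $\delta > 0$. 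To upgrade from an estimate at the origin to the full $C^{1,\delta}(B_R)$ norm, one notes that the hypotheses $S_1^c(u,0)\subset B_K$, $S_1^c(v,0)\subset B_K$ propagate (with controlled constants) to analogous normalization bounds at every interior point and every boundary point of $\Omega \cap B_R$ — this is precisely the content of the engulfing property applied at a general base point $x$, which is why Proposition~\ref{prop: engulfing} is stated with an arbitrary $x \in \overline\Omega$. Running the same dyadic argument at each such point, with uniform constants, gives a uniform modulus of continuity for $\nabla\bar u$ on $B_R$; the same for $\nabla \bar v$.

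The main obstacle is the first step: showing that the centered sections cannot stay "too elongated" or fail to shrink at a geometric rate. The engulfing property alone only gives nesting; one genuinely needs to combine it with the two-sided volume pinching in Proposition~\ref{prop: properties-of-sections}(2) and the duality relation between sections of $u$ and sections of $v$ in part (4), to rule out the scenario where the section shrinks in volume but its diameter (or eccentricity) does not decay. This is the heart of Caffarelli's argument and the place where convexity of both $\Omega$ and $\Omega'$ is used in an essential way. Everything else — the passage from section geometry to the pointwise Hölder bound, and the upgrade to a uniform estimate on $B_R$ — is a routine bookkeeping exercise once the section-decay rate $\delta(n,K)$ is in hand. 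Since the statement explicitly attributes the result to \cite{Caffarelli2, Jhaveri-Savin, Savin-Yu}, I would present this as a recollection of their argument rather than reproving every detail, citing the relevant lemmas for the quantitative dependence of $\delta$ and $C$ on $n$, $K$, and $R$.
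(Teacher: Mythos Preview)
Your proposal is correct and takes essentially the same approach as the paper: the paper's proof is a one-line citation to \cite[Proposition 2.5, Remark 2.1]{Savin-Yu} together with the engulfing property (Proposition~\ref{prop: engulfing}), and your outline is simply a more expanded recollection of that same argument --- iterate the engulfing property on dyadic centered sections, combine with the volume/duality bounds of Proposition~\ref{prop: properties-of-sections} to force a geometric decay rate $\delta(n,K)$, and then convert section decay into a pointwise H\"older gradient bound uniformly over $B_R$.
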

    \begin{proof}
        The proof follows from \cite[Proposition 2.5, Remark 2.1]{Savin-Yu} using Proposition~\ref{prop: engulfing}. 
    \end{proof}

Moreover, we will also define the following sets
\[D_r(u, x_0) := \{x\in\Omega: (x-x_0)\cdot (\nabla u(x)-\nabla u(x_0))\leq r^2\}\]
which we call {\bf extrinsic balls of radius $r$ centered at $x_0$}. These extrinsic balls were also used in \cite{Yuan}. 

The following proposition shows that up to a factor of 2, the extrinsic ball and the sections have the same shape. 

\begin{prop}\label{prop: comparison between sections and extrinsic ball}
Assume that $u$ is strictly convex, then 
    \[\frac{1}{2}S_{r^2}(u, x_0)\subset D_r(u, x_0)\subset S_{r^2}(u, x_0)\]
\end{prop}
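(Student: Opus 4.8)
The plan is to reduce to a normalized configuration and then verify each of the two inclusions by a one-variable convexity argument along a ray through $x_0$. First I would apply a translation to assume $x_0 = 0$, and then subtract from $u$ the supporting affine function $l(x) = u(x_0) + \nabla u(x_0)\cdot(x - x_0)$; this changes neither $S_{r^2}(u, x_0)$, nor $D_r(u, x_0)$, nor the asserted inclusions, so we may assume $u(0) = 0$ and $0 \in \partial u(0)$. Convexity then gives $u \ge 0$ on $\Omega$, and in these coordinates $S_{r^2}(u,0) = \{x\in\Omega : u(x)\le r^2\}$ while $D_r(u,0) = \{x\in\Omega : x\cdot\nabla u(x) \le r^2\}$, which is the form I will work with.

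For the inclusion $D_r(u,0)\subset S_{r^2}(u,0)$, I would simply apply the (sub)gradient inequality for $u$ at the point $x$ tested against $0$: $0 = u(0) \ge u(x) + \nabla u(x)\cdot(0-x)$, i.e. $u(x)\le x\cdot\nabla u(x)$. Hence $x\cdot\nabla u(x)\le r^2$ forces $u(x)\le r^2$, which is exactly membership in $S_{r^2}(u,0)$. For the reverse inclusion $\tfrac12 S_{r^2}(u,0)\subset D_r(u,0)$, I would take $x$ with $2x\in S_{r^2}(u,0)$, so $2x\in\Omega$ and $u(2x)\le r^2$; since $0\in\overline\Omega$ and $\Omega$ is open and convex, $x\in\Omega$ as well. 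Then I consider the convex function $\phi:[0,1]\to\R$ given by $\phi(t) = u(2tx)$, which satisfies $\phi(0)=0$, $\phi(1/2) = u(x)\ge 0$, $\phi(1) = u(2x)\le r^2$, and $\phi'(1/2) = 2\,x\cdot\nabla u(x)$. The elementary slope inequality for convex functions gives $\tfrac12\,\phi'(1/2)\le \phi(1) - \phi(1/2)\le \phi(1)\le r^2$, whence $x\cdot\nabla u(x) = \tfrac12\phi'(1/2)\le r^2$, i.e. $x\in D_r(u,0)$.

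This argument is essentially soft and I do not expect a genuine obstacle; the work is all bookkeeping. The points that need a little care are: checking that both sets and both inclusions are truly invariant under the normalization (immediate once written out); ensuring $x\in\Omega$ when passing from $2x\in S_{r^2}(u,0)$ to $x$ (which is where openness/convexity of $\Omega$ enters); and, if one wants the statement to cover $x_0\in\partial\Omega$, reading "$\nabla u$" as a subgradient throughout, so that the two estimates above are simply two instances of the subgradient inequality. Strict convexity of $u$ is not used in an essential way here beyond guaranteeing that the sections are bounded and the relevant gradients are defined; it is carried along as the paper's standing hypothesis.
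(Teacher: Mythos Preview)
Your proof is correct and is essentially the same as the paper's: after the identical normalization, the paper records the convexity chain $u(x)\le x\cdot\nabla u(x)\le u(2x)-u(x)\le u(2x)$ and reads off both inclusions, which is exactly what your two subgradient/slope inequalities amount to (your auxiliary function $\phi(t)=u(2tx)$ just repackages the middle inequality). Your remarks on openness/convexity of $\Omega$ and on reading $\nabla u$ as a subgradient are more careful than the paper's terse version, but the argument is the same.
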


\begin{proof}
    Without loss of generality, we can assume $x_0 = 0$, $u(0) =0$, and $\nabla u(0) = 0$ by a shift and subtracting a linear function. It follows by convexity that \[u(x)=u(x)-u(0)\leq x\cdot \nabla u(x)\leq u(2x)-u(x)\leq u(2x),\] 
    from which the two inclusions follows immediately. 
\end{proof}

\section{Monotonicity formula}\label{sec: monotonicity-formula}
The main goal of this section is to prove a monotonicity formula for optimal transport maps on convex domains.  We note that  Yuan \cite{Yuan} used a somewhat related monotonicity formula in the special case $g=g'=1$ to obtain a new proof of Pogorelov's estimate. 
\begin{theorem}\label{thm: monotonicity}
    Let $((\Omega, g(x)dx), (\Omega', g'(y)dy), u, v)$ be an optimal transport map with
    \begin{itemize}
        \item[$(i)$]$g\in C^{\alpha}(\overline{\Omega})$, $\Omega \subset \{g>0\}$, and $g$ is homogeneous of degree $l$.
        \item[(ii)]$g'\in C^{\alpha}(\overline{\Omega'})$, $\Omega' \subset \{g'>0\}$, and $g'$ is homogeneous of degree $k$.
    \end{itemize}
    Assume $0\in \overline\Omega\cap\overline{\Omega'}$ and $0 = u(0)= |\nabla u|(0)$. Then the quantity
	\[\chi(r) := r^{-\frac{2(n+l)}{1+\frac{n+l}{n+k}}}\mu(D_r(u, 0)) \]
    is monotone non-increasing in $r$, where recall \[\mu(D_r(u, 0)) = \int_{\{\psi(x)\leq r^2\}}g(x)dx\]
    for $\psi(x):= x\cdot \nabla u(x)$. 
    More precisely, for any $r_2<r_1$ we have
    \[
    \chi(r_1)-\chi(r_2) \leq  -2(1+\frac{n+l}{n+k})^2\int_{r_2}^{r_1}s^{-(1+\frac{2(n+l)(n+k)}{2n+l+k})}\int_{\Omega \cap \{\psi=s\}}|(\frac{n+l}{n+k})\nabla u -x|^2_{u}\frac{g(x)}{|\nabla \psi|}\, d\mathcal{H}^{n-1}(x)ds
    \]
    where $|V|_u^2=V^iV^ju_{ij}$.  In particular, $\chi(r)$ is constant on $[0,r_0]$ if and only if $\Omega$ and $\Omega'$ are both conical about the origin and $u$ is homogeneous of degree $1+\frac{n+l}{n+k}$ in $D_{r_0}(u, 0)$. 
\end{theorem}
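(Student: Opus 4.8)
The strategy is a standard first-variation/Rellich–Pohozaev computation adapted to the optimal transport setting. Introduce the function $\psi(x) = x\cdot\nabla u(x)$, which by Proposition~\ref{prop: comparison between sections and extrinsic ball} controls the extrinsic balls $D_r(u,0) = \{\psi \le r^2\}$. The key is to differentiate $\mu(D_r(u,0)) = \int_{\{\psi \le r^2\}} g\,dx$ in $r$ using the coarea formula, producing $\frac{d}{dr}\mu(D_r) = 2r\int_{\{\psi = r^2\}} \frac{g}{|\nabla\psi|}\,d\mathcal H^{n-1}$, and then to find a \emph{second} expression for this boundary integral via an integration by parts that exploits the Monge–Ampère equation $\det D^2u = g/g'(\nabla u)$ and the homogeneity of $g,g'$. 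The precise power $r^{-2(n+l)/(1+\frac{n+l}{n+k})}$ is exactly the one that makes the "error" terms in this comparison assemble into the manifestly nonpositive quantity $-2(1+\tfrac{n+l}{n+k})^2\int \cdots |(\tfrac{n+l}{n+k})\nabla u - x|_u^2 \cdots$.

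**Key steps in order.** First I would set up the divergence identity: writing $\nabla\psi = \nabla u + (D^2u)x$, one computes $\operatorname{div}(g\, \nabla u) $ and $\operatorname{div}(g\, (D^2u)x)$-type quantities. The natural vector field to integrate is a combination of the "dilation field" $x$ and the "gradient field" $\nabla u$, weighted by $g$; the coefficient $\tfrac{n+l}{n+k}$ should emerge as the unique ratio making the bulk divergence term of the combined field $g\,((\tfrac{n+l}{n+k})\nabla u - x)$, or rather a related pairing, vanish after using $\det D^2u\cdot g'(\nabla u) = g$, $x\cdot\nabla g = l g$, and $y\cdot\nabla g' = k g'$ (the last evaluated at $y = \nabla u$). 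Second, I would apply the divergence theorem on the sublevel set $\{\psi \le r^2\}$, so that the bulk term either vanishes or has a definite sign, and the boundary term on $\{\psi = r^2\}$ is expressed using the outward normal $\nabla\psi/|\nabla\psi|$. Third, combining this with the coarea expression for $\frac{d}{dr}\mu(D_r)$ and matching powers of $r$, one obtains $\chi'(r) \le -(\text{positive constant})\, r^{-(\cdots)} \int_{\{\psi = r^2\}} |(\tfrac{n+l}{n+k})\nabla u - x|_u^2 \,\frac{g}{|\nabla\psi|}\,d\mathcal H^{n-1}$; integrating in $r$ from $r_2$ to $r_1$ and a final change of variables $s = r^2$ (or keeping $s$ as the height $\psi = s$) gives the stated inequality. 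Fourth, for the rigidity statement: $\chi$ constant forces $(\tfrac{n+l}{n+k})\nabla u = x$ in the $u$-metric sense a.e. on every level set of $\psi$ in $D_{r_0}$, i.e. $\nabla u(x)$ is parallel-transported to a multiple of $x$; this is an ODE along rays forcing $u$ homogeneous of degree $1 + \tfrac{n+l}{n+k}$, and then $\Omega = \nabla v(\Omega')$, $\Omega'$ conical follow from homogeneity of $\nabla u$ and its inverse.

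**Main obstacle.** The delicate point is justifying the integration by parts and the coarea differentiation rigorously: $u$ is only $C^{1,\alpha}$ up to the boundary and the second derivatives $D^2u$ exist only in the Alexandrov/$W^{2,p}_{loc}$ sense, the level sets $\{\psi = r^2\}$ need not be smooth for every $r$ (only a.e. $r$ by Sard-type arguments), and $\psi$ itself is merely Lipschitz. I expect the cleanest route is to first prove everything assuming $u$ smooth and strictly convex and $g,g'$ smooth and positive (where all manipulations are classical), obtaining the monotonicity of $\chi$ and the precise differential inequality, and then pass to the general case by an approximation argument—regularizing the densities and using the stability of optimal transport maps under Hausdorff/uniform convergence together with the a priori interior estimates—noting that $\chi(r)$ and the right-hand side are continuous under such limits. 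A secondary technical nuisance is bookkeeping the homogeneity exponents and confirming that $2(n+l)/(1+\tfrac{n+l}{n+k}) = 2(n+l)(n+k)/(2n+l+k)$ matches the exponent in the integral remainder; this is routine algebra but must be done carefully to get the constant $2(1+\tfrac{n+l}{n+k})^2$ exactly right.
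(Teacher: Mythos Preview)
Your plan is essentially the paper's approach: a Rellich--Pohozaev identity on the sublevel sets $\{\psi\le r^2\}$, combined with the coarea formula and an approximation by smooth data. The paper organizes the computation via the weighted Laplacian $\Delta_f\varphi = g^{-1}\partial_i(g\,u^{ij}\partial_j\varphi)$ applied to $\phi=\gamma u+\psi$ and then optimizes over $\gamma$, which is the dual formulation of your ``divergence of a combination of $x$ and $\nabla u$''; and your approximation step matches the paper's.

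There is, however, one genuine omission. When you apply the divergence theorem on $\Omega\cap\{\psi\le r^2\}$, the boundary has \emph{two} components: the level set $\{\psi=r^2\}$, which you treat, and the lateral piece $\partial\Omega\cap\{\psi\le r^2\}$, which you do not mention. Since $0\in\partial\Omega$ in general, this lateral piece is nonempty and produces an extra boundary integral. In the paper's notation it is
\[
\int_{\partial\Omega\cap\{\psi\le r\}}\langle\nabla\phi,\nu_{\partial\Omega}\rangle_u\,g\,d\mathcal H^{n-1},
\qquad
\langle\nabla\phi,\nu_{\partial\Omega}\rangle_u=(1+\gamma)\,u_iu^{ij}(\nu_{\partial\Omega})_j+x\cdot\nu_{\partial\Omega}.
\]
The monotonicity requires this term to have a favorable sign, and this is exactly where convexity of \emph{both} domains enters: $x\cdot\nu_{\partial\Omega}\ge 0$ since $\Omega$ is convex with $0\in\overline\Omega$, while $u^{ij}(\nu_{\partial\Omega})_j$ is the outward normal to $\partial\Omega'$ at $\nabla u(x)$, so $u_iu^{ij}(\nu_{\partial\Omega})_j=y\cdot\nu_{\partial\Omega'}\ge 0$ since $\Omega'$ is convex with $0\in\overline{\Omega'}$. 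Without this observation the inequality does not close. Relatedly, your rigidity argument for the conicality of $\Omega,\Omega'$ is not quite right: it does not follow a posteriori from the homogeneity of $u$, but rather comes directly from equality in these two boundary inequalities (equality in $x\cdot\nu_{\partial\Omega}\ge 0$ on all of $\partial\Omega\cap D_{r_0}$ forces $\partial\Omega$ to be conical there, and likewise for $\Omega'$).
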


While the basic idea of the proof of Theorem~\ref{thm: monotonicity} is relatively straightforward, the technical details are complicated due to possible regularity issues.  The proof consists of two parts.  The first part is a calculation assuming sufficient regularity.  The second part is an approximation argument. For our later purposes, it is useful to have a very weak notion of (sub)-homogeneity.  We make the following technical definition

\begin{defn}\label{defn: weaklySubHomog}
    Let  $((\Omega, g(x)dx), (\Omega', g'(y)dy), u, v)$ be an optimal transport map, where $g \in C^{1,\alpha}(\overline{\Omega})$ (resp. $g' \in C^{1,\alpha}(\overline{\Omega'}$).  We say that $g(x)$ (resp. $g'(y)$) is {\bf weakly sub-homogeneous} of degree $l$, with constants $(C,\delta)$ (resp. degree $l$) if there are constants $C,\delta \geq 0$, independent of $r$, such that
    \[
    \int_{D_r}x\cdot \nabla g(x)dx \leq (l+Cr^{\delta})\int_{D_r}g(x)dx
    \]
    and
    \[
    \int_{D_r}y\cdot \nabla g'(y)dy \leq (k+Cr^{\delta})\int_{D_r}g'(y)dy.
    \]
    By convention, if $C=0$ we take $\delta =1$
 \end{defn}

Before proceeding to the proof, let us give some examples of weakly sub-homogeneous functions.  The primary example we have in mind is when $g$ is non-negative and  satisfies 
\begin{equation}\label{eq: subhomogeneous}
x\cdot \nabla g(x) \leq kg(x)
\end{equation}
then clearly $g$ is weakly homogeneous of degree $k$ with constants $(0,1)$, following the convention of Definition~\ref{defn: weaklySubHomog}.  For example, if $g$ is non-negative and homogeneous of degree $k$, then for any $\epsilon>0$, $g+\epsilon$ satisfies \eqref{eq: subhomogeneous}.   On the other hand, the notion of weak subhomogeneity is sufficiently general to allow positive, H\"older continuous densities; see Proposition~\ref{prop: effective-monotonicity-for Holder-density} below.

The following proposition is the key calculation, which can be viewed as a formal proof of Theorem~\ref{thm: monotonicity}.

\begin{prop}\label{prop: monotonicityRegularized}
    Let  $((\Omega, g(x)dx), (\Omega', g'(y)dy), u, v)$ be an optimal transport map with  where $(g(x), g'(y))$ are positive, bounded uniformly away from zero, $g\in C^{1,\alpha}_{loc}(\Omega) \cap C^{\alpha}(\overline{\Omega})$, $g' \in C_{loc}^{1,\alpha}(\Omega') \cap C^{\alpha}(\overline{\Omega'})$.  Assume in addition that $(g,g')$ are weakly sub-homogeneous degree $(l,k)$ with constants $(C,\delta)$ in the sense of Definition~\ref{defn: weaklySubHomog}.  Assume that $\Omega, \Omega'$ are smooth and strictly convex,  $0\in \overline\Omega\cap\overline{\Omega'}$. Define the quantity
	\[\chi(r) = r^{-\frac{2(n+l)}{1+\frac{n+l}{n+k}}}\mu(D_r(u, 0)) = r^{-\frac{2(n+k)}{1+\frac{n+k}{n+l}}}\nu(D_r(v, 0)),\]
     where \[\mu(D_r(u, 0)) = \mu(\{x\cdot \nabla u(x)\leq r^2\}) = \int_{\{x\cdot \nabla u(x)\leq r^2\}}g(x)dx.\] 
     Then there is a constant $\beta = \beta (n, k, l)$ such that $e^{-\frac{\beta}{ \delta} C r^{\delta}}\chi(r)$
    is monotone non-increasing in $r$.
\end{prop}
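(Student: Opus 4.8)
The plan is to compute the logarithmic derivative $\frac{d}{dr}\log\chi(r)$ and show that, up to the weak-subhomogeneity error terms, it is $\le 0$; the correction factor $e^{-\frac{\beta}{\delta}Cr^\delta}$ will exactly absorb those error terms. Write $\psi(x)=x\cdot\nabla u(x)$, so $D_r = \{\psi\le r^2\}$, and recall the dual identity $\psi(x)=x\cdot\nabla u(x)=y\cdot\nabla v(y)$ along the transport graph $y=\nabla u(x)$, which gives the two expressions for $\chi(r)$ and will be the source of the symmetry between the domain and target computations. First I would differentiate $\mu(D_r) = \int_{\{\psi\le r^2\}} g\,dx$ using the coarea formula: $\frac{d}{dr}\mu(D_r) = 2r\int_{\{\psi=r^2\}} \frac{g}{|\nabla\psi|}\,d\mathcal{H}^{n-1}$. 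The main computational step is to obtain a second, independent expression for $\mu(D_r)$ itself via the divergence theorem, by writing $\mu(D_r)$ as an integral of a divergence over $D_r$ and converting to a boundary integral over $\{\psi=r^2\}$. Concretely, since $\nabla\psi = \nabla u + D^2u\,x$, one has $x\cdot\nabla\psi = x\cdot\nabla u + |x|_u^2 = \psi + |x|_u^2$, and integrating $\operatorname{div}(x\,g)= (n + x\cdot\nabla\log g)\,g$ over $D_r$ against the vector field $x$ produces $\int_{D_r}(n + x\cdot\nabla\log g)g\,dx = \int_{\{\psi=r^2\}} (x\cdot\nu)\,g\,d\mathcal{H}^{n-1}$ where $\nu = \nabla\psi/|\nabla\psi|$; the strict convexity of $\Omega,\Omega'$ and the condition $0\in\overline\Omega\cap\overline{\Omega'}$, $u(0)=|\nabla u|(0)=0$ guarantee the boundary terms on $\partial\Omega$ vanish (the relevant vector field is tangential there, as in Caffarelli's and Kim--McCann's arguments).

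Next, using $x\cdot\nu = (\psi + |x|_u^2)/|\nabla\psi| = (r^2 + |x|_u^2)/|\nabla\psi|$ on $\{\psi=r^2\}$, the boundary integral becomes $r^2\int_{\{\psi=r^2\}}\frac{g}{|\nabla\psi|}\,d\mathcal{H}^{n-1} + \int_{\{\psi=r^2\}}|x|_u^2\frac{g}{|\nabla\psi|}\,d\mathcal{H}^{n-1}$, and by the weak-subhomogeneity hypothesis the left side is bounded: $\int_{D_r}(n+x\cdot\nabla\log g)g\,dx \le (n+l+Cr^\delta)\mu(D_r)$. Combining with the coarea derivative gives
\[
(n+l+Cr^\delta)\,\mu(D_r) \;\ge\; \frac{r}{2}\frac{d}{dr}\mu(D_r) + \int_{\{\psi=r^2\}}|x|_u^2\frac{g}{|\nabla\psi|}\,d\mathcal{H}^{n-1}.
\]
Performing the identical computation on the target side (using $y\cdot\nabla v(y)=\psi$, $\nu(D_r(v,0)) = \int g'$, and the homogeneity degree $k$) yields the analogous inequality with $n+l$ replaced by $n+k$, $\mu$ by $\nu$, and $|x|_u^2$ replaced by $|y|_v^2$. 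Here one uses that $\nu(D_r(v,0))=\mu(D_r(u,0))$ up to the normalization in $\chi$, i.e. the two formulas for $\chi(r)$ agree; this is where the specific exponent $\frac{2(n+l)}{1+\frac{n+l}{n+k}} = \frac{2(n+l)(n+k)}{2n+l+k}$ is forced.

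Finally I would combine the two inequalities. Dropping the nonnegative boundary integrals $\int|x|_u^2\frac{g}{|\nabla\psi|}$ and $\int|y|_v^2\frac{g'}{|\nabla\psi|}$ (or better, keeping them to get the sharp remainder as in Theorem~\ref{thm: monotonicity}), one gets $\frac{r}{2}\frac{\mu'(D_r)}{\mu(D_r)} \le n+l+Cr^\delta$ and similarly for $\nu$; a weighted average of these two — with weights chosen precisely so the $\frac{d}{dr}\log\mu$ and $\frac{d}{dr}\log\nu$ terms assemble into $\frac{d}{dr}\log\chi$ after accounting for the $r^{-\text{power}}$ prefactor — produces
\[
\frac{d}{dr}\log\chi(r) \;\le\; \frac{2\beta}{r}\,C r^\delta \;=\; \beta C r^{\delta-1}
\]
for a dimensional constant $\beta=\beta(n,k,l)$. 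Integrating, $\frac{d}{dr}\log\bigl(e^{-\frac{\beta}{\delta}Cr^\delta}\chi(r)\bigr) = \frac{d}{dr}\log\chi(r) - \beta C r^{\delta-1}\le 0$, which is the claim. The main obstacle I anticipate is not the algebra but the \emph{regularity/justification} of the divergence-theorem step: $\psi$ is only as smooth as $D^2u$ allows, the level sets $\{\psi=r^2\}$ need not be smooth for all $r$, and $|\nabla\psi|$ could vanish; however, under the standing assumptions of Proposition~\ref{prop: monotonicityRegularized} ($g,g'$ positive and $C^{1,\alpha}_{loc}$, $\Omega,\Omega'$ smooth and strictly convex) interior elliptic regularity gives $u\in C^{2,\alpha}_{loc}$, and Sard's theorem handles almost every $r$, with the monotone function on $[0,r_0]$ reconstructed from a.e. $r$ by its integral representation — so the genuinely delicate approximation argument is deferred to the proof of Theorem~\ref{thm: monotonicity} itself, exactly as the text signals.
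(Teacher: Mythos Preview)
Your divergence-theorem setup is correct up through the inequality
\[
(n+l+Cr^\delta)\,\mu(D_r)\;\ge\;\frac{r}{2}\frac{d}{dr}\mu(D_r)+\int_{\{\psi=r^2\}}|x|_u^2\,\frac{g}{|\nabla\psi|}\,d\mathcal H^{n-1},
\]
and the dual one with $(n+k)$ and $|y|_v^2$. But the combination you describe does not close. Since $\mu(D_r(u,0))=\nu(D_r(v,0))$ exactly (the two exponents in $\chi$ are the \emph{same} number $2\kappa_*=\tfrac{2(n+l)(n+k)}{2n+l+k}$, so the two formulas for $\chi$ agree trivially, not by any clever choice), dropping the nonnegative integrals gives two upper bounds on the \emph{same} quantity $sN'(s)/N(s)$, namely $n+l$ and $n+k$. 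Any convex combination of these lies in $[\min(n+l,n+k),\max(n+l,n+k)]$, whereas what you need is $sN'/N\le \kappa_*=\tfrac{(n+l)(n+k)}{2n+l+k}$, which is strictly \emph{smaller} than $\min(n+l,n+k)$. So the $|x|_u^2$ and $|y|_v^2$ terms are not ``sharp remainders'' one can discard; they are essential.

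The missing step is a pointwise relation between the two good terms. On $\{\psi=s\}$ one has $|x|_u^2\,|y|_v^2\ge (x\cdot y)^2=\psi^2=s^2$ by Cauchy--Schwarz in the metric $D^2u$, hence $\alpha|x|_u^2+\alpha^{-1}|y|_v^2\ge 2s$ for every $\alpha>0$. Pushing one integral to the other side via the transport and adding $\alpha$ times the first inequality to $\alpha^{-1}$ times the second then yields $sN'/N\le \tfrac{\alpha(n+l)+\alpha^{-1}(n+k)}{(\sqrt\alpha+\sqrt{\alpha}^{-1})^2}$, which is minimized at $\alpha=(n+k)/(n+l)$ with value exactly $\kappa_*$. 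With this fix your route works. The paper achieves the same thing in one stroke: it integrates by parts with the function $\phi=\gamma u+\psi$ against the weighted Laplacian $\Delta_f$, computes $\langle\nabla\phi,\nabla\psi\rangle_u=(1+\gamma)|\nabla u|_u^2+|x|_u^2+(2+\gamma)\psi$ (note $|\nabla u|_u^2=|y|_v^2$), completes the square to $|\sqrt{1+\gamma}\,\nabla u-x|_u^2+(1+\sqrt{1+\gamma})^2\psi$, and minimizes over $\gamma$; the completion of the square is exactly the Cauchy--Schwarz step you are missing.

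Two smaller corrections: the boundary contribution on $\partial\Omega$ does \emph{not} vanish (the vector field $x$ is not tangential there); rather $x\cdot\nu_{\partial\Omega}\ge 0$ by convexity and $0\in\overline\Omega$, which gives an inequality in the right direction. And the regularity needed to justify the integration by parts is $u\in C^{2,\alpha}$ up to $\partial\Omega\cap\{\psi\le r\}$, which is precisely why the proposition assumes $\Omega,\Omega'$ smooth and strictly convex (so Caffarelli's boundary theory applies); interior $C^{2,\alpha}$ alone does not handle the $\partial\Omega$ piece.
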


\begin{proof}
Since $\Omega, \Omega'$ are smooth, strictly convex and $g,g'$ are positive, $C^{1,\alpha}$ regular in the interior and $C^{\alpha}$ up to the boundaries, we have that $u \in C_{loc}^{3,\alpha}(\Omega) \cap C^{2,\alpha}(\overline{\Omega})$ and $v \in C^{3,\alpha}_{loc}(\Omega') \cap C^{2,\alpha}(\overline{\Omega'})$ by Caffarelli's regularity theory \cite{Caffarelli, Caffarelli2, Caffarelli3}. Let $f(x)=-\frac{1}{2}\log (g'(\nabla u(x))g(x))$ and define the Laplacian
\[
\begin{aligned}
\Delta_f \phi &:= \frac{1}{\sqrt{\det D^2u}\,e^{-f}}\del_i(u^{ij}\sqrt{\det D^2u}\, e^{-f}\del_j\phi)\\
&= \frac{1}{g}\del_i(u^{ij} g \del_j\phi)\\
&=u^{ij}\phi_{ij}-u^{ik}u^{pj}u_{ikp}\phi_j+u^{ij}\frac{1}{g}g_i\phi_j
\end{aligned}
\]
where $\del_i = \frac{\del}{\del x_i}$ and $\phi_i = \frac{\del \phi}{\del x_i}$.  Note that $\Delta_f$ satisfies the product formula
\[
\Delta_f(\varphi \psi) = \varphi \Delta_f\psi + \psi \Delta_f \varphi + 2u^{ij}\varphi_i\psi_j,
\]
as well as the integration by parts formula
\begin{equation}\label{eq: IBPweightLaplacian}
\begin{aligned}
\int_{\Omega} (\Delta_f\varphi) \psi \sqrt{\det D^2u}e^{-f} &= \int_{\Omega} \langle \nabla \varphi, \nabla \psi\rangle_u\sqrt{\det D^2u}e^{-f}\\
\langle \nabla \varphi, \nabla \psi \rangle_u &:= u^{ij}\varphi_i\psi_j
\end{aligned}
\end{equation}
provided $\psi$ is compactly supported in $\Omega$.  More concretely, we can write the integral appearing in~\eqref{eq: IBPweightLaplacian} as
\[
\int_{\Omega} (\Delta_f\varphi) \psi \sqrt{\det D^2u}e^{-f}=\int_{\Omega} (\Delta_f\varphi) \psi\, g(x)dx
\]
from which the integration by parts formula follows trivially.

The main observation is that we have the following formulas:
\begin{equation}\label{eq: usefulFunctionsLinearizedMA}
\begin{aligned}
    \Delta_f u &=n+\sum_j\frac{1}{g'}\frac{\del g'}{\del y_j}u_j\\
    \Delta_f\left(x\cdot \nabla u(x) \right)&= 2n+\sum_j\frac{1}{g'}\frac{\del g'}{\del y_j}u_j+\sum_j\frac{1}{g}\frac{\del g}{\del x_j}x_j
\end{aligned}
\end{equation}
Let us prove the first formula.  From the expression for $\Delta_{f}$ we have
\[
\Delta_fu = n-u^{iq}u^{pj}u_{iqp}u_j + u^{ij}\frac{1}{g}g_iu_j
\]
By differentiating the optimal transport equation we have
\[
u^{ik}u_{ikp} + \frac{1}{g'}\frac{\del g'}{\del y_m}u_{mp} =\frac{1}{g}\frac{\del g}{\del x_p}
\]
so that
\[
\begin{aligned}
u^{ik}u^{pj}u_{ikp}u_j&= u^{pj}u_j\frac{1}{g} g_p- \frac{1}{g'}\frac{\del g'}{\del y_j}u_j\\
\end{aligned}
\]
Therefore, we have
\[
\Delta_fu = n+ \sum_j\frac{1}{g'}\frac{\del g'}{\del y_j}u_j
\]
Next we consider the second formula in ~\eqref{eq: usefulFunctionsLinearizedMA}.  From the product formula, for each $1 \leq m \leq n$, we have
\[
\Delta_f (x_mu_m) = x_m\Delta_f u_m + u_m\Delta_f x_m + 2
\]
We compute
\[
\begin{aligned}
\Delta_fu_m &= \frac{1}{g}\frac{\del g}{\del x_m}\\
\Delta_f x_m &= -u^{pm}u^{iq}u_{ipq} + u^{im}\frac{1}{g}g_i = \frac{1}{g'}\frac{\del g'}{\del y_m}.
\end{aligned}
\]
and the second formula follows.

Using these two functions we prove the monotonicity formula.  Let us denote by $\psi = x\cdot \nabla u(x)$ and consider the function $\phi= \gamma u + \psi $ for some $\gamma \in \mathbb{R}_{>0}$ to be determined.  In the following calculation, for vectors $V,W$, we denote by $V\cdot W$ the euclidean inner product, and use $|V|$ for the euclidean norm of $V$.  We use 
\[
\langle V,W \rangle_u := V^iW^ju_{ij}, \qquad |V|_u^2 = \langle V, V \rangle_u.
\]
For $\kappa >0$ we compute
\[
\frac{d}{dr}\left(r^{-\kappa}\int_{\{\psi \leq r\}} e^{-f}\sqrt{\det D^2u}\right) = -\kappa r^{-(\kappa+1)}\int_{\{\psi \leq r\}}g(x)dx + r^{-\kappa}\int_{\{\psi =r\}}\frac{g(x)}{|\nabla \psi|}dx
\]
Now we use the formula
\[
\Delta_f \phi = \lambda + (1+\gamma)\left(\sum_j\frac{1}{g'}\frac{\del g'}{\del y_j}u_j-k\right) + \left(\sum_j\frac{1}{g}\frac{\del g}{\del x_j}x_j-l\right).
\]
where $\lambda =\gamma(n+k) +2n+k +l$. Then we have
\begin{equation}\label{eq: strictIneqMonotonicityFirstStep}
\begin{aligned}
\int_{\{\psi \leq r\}}g(x)dx &= \frac{1}{\lambda}\int_{\{\psi \leq r\}}(\Delta_f \phi)gdx - \frac{(1+\gamma)}{\lambda}\int_{\{\psi \leq r\}}\left(\sum_j\frac{1}{g'}\frac{\del g'}{\del y_j}u_j-k\right)\, g(x)dx\\
&\quad -\frac{1}{\lambda}\int_{\{\psi \leq r\}} \left(\sum_j\frac{1}{g}\frac{\del g}{\del x_j}x_j-l\right)g(x)dx
\end{aligned}
\end{equation}
We now use that $(g,g')$ are weakly sub-homogeneous of degree $(l,k)$ with constants $C(,\delta)$.  This yields
\[
\int_{\{\psi \leq r\}} \left(\sum_j\frac{1}{g}\frac{\del g}{\del x_j}x_j-l\right)g(x)dx \leq Cr^{\delta/2}\int_{\psi \leq r}g(x)dx.
\]
Similarly, by the optimal transport equation we have
\[
\begin{aligned}
\int_{\{\psi \leq r\}}\left(\sum_j\frac{1}{g'}\frac{\del g'}{\del y_j}u_j-k\right)\, g(x)dx &= \int_{\{\psi \leq r\}}\left(\sum_j\frac{1}{g'}\frac{\del g'}{\del y_j}y_j-k\right)\, g'(y)dy\\
&\leq Cr^{\delta/2}\int_{\{\psi \leq r\}}g'(y)dy \\
&= Cr^{\delta/2}\int_{\{\psi \leq r\}}g(x)dx
\end{aligned}
\]
Thus, we have
\begin{equation}\label{eq: strictIneqMonotonicitySecondStep}
\begin{aligned}
\int_{\{\psi \leq r\}}g(x)dx &\geq \frac{1}{\lambda}\int_{\{\psi \leq r\}}(\Delta_f \phi)\,g(x)dx - \frac{(2+\gamma)}{\lambda}Cr^{\delta}\int_{\{\psi \leq r\}}g(x)dx\\
\end{aligned}
\end{equation}
It remains to analyze the first term. Let $\nu_{\del\Omega}$ denote the outward pointing normal vector to $\del \Omega$. We can now integrate-by-parts to get
\begin{equation}\label{eq: IBPsketchy}
\begin{aligned}
    \frac{1}{\lambda}\int_{\{\psi \leq r\}}(\Delta_f \phi)\,g(x)dx&= \frac{1}{\lambda}\int_{\Omega \cap \{\psi =r\}}\langle \nabla \phi, \nabla \psi \rangle_{u}\frac{e^{-f}}{|\nabla \psi|} \, d\mathcal{H}^{n-1}(x)\\
    &\quad +\frac{1}{\lambda} \int_{\del \Omega \cap \{\psi \leq r\}} \langle \nabla \phi, \nu_{\del \Omega}\rangle_u e^{-f}\, d\mathcal{H}^{n-1}(x)
\end{aligned}
\end{equation}
Now we compute
\[
\begin{aligned}
\langle \nabla \phi, \nabla \psi\rangle_u &= \gamma u^{ij} u_i(u_j+\sum_mx_mu_{mj}) +u^{ij}\left((u_i+\sum_px_pu_{pi})((u_j+\sum_mx_mu_{mj})\right)\\
&= (1+\gamma)|\nabla u|^2_u + |x|_u^2 +(2+\gamma)\psi
\end{aligned}
\]
where we recall that $|x|_u^2 = \sum_{i,m} u_{im}x_ix_m$.  Using that $\langle \nabla u, x\rangle_u= \psi$, we can complete the square as follows
\[
\langle \nabla \phi, \nabla \psi\rangle_u  = |(\sqrt{1+\gamma}) \nabla u -x|^2_{u} +(1+\sqrt{1+\gamma})^2\psi.
\]
In total, we have
\[
\begin{aligned}
    \frac{d}{dr}\left(r^{-\kappa}\int_{\Omega \cap \{\psi \leq r\}} g(x)dx\right) &\leq -\frac{\kappa}{\lambda}r^{-(\kappa+1)}\int_{\Omega \cap \{\psi =r\}}|(\sqrt{1+\gamma}) \nabla u -x|^2_{u} \frac{g(x)}{|\nabla \psi|}\, d\mathcal{H}^{n-1}(x)\\
    &-\frac{\kappa}{\lambda}r^{-(\kappa+1)}\int_{\Omega\cap\{\psi =r\}}\left((1+\sqrt{1+\gamma})^2-\frac{\lambda}{\kappa}\right)\psi\frac{g(x)}{|\nabla \psi|}\, d\mathcal{H}^{n-1}(x)\\
    &\quad -\frac{\kappa}{\lambda}r^{-(\kappa+1)}\int_{\del \Omega \cap \{\psi \leq r\}} \langle \nabla \phi, \nu_{\del \Omega}\rangle_u g(x)\, d\mathcal{H}^{n-1}(x)\\
    &+\frac{\kappa(2+\gamma)}{\lambda} r^{-(\kappa+1)} Cr^{\delta/2}\int_{\{\psi \leq r\} } g(x) dx
\end{aligned}
\]
Choose $\kappa= \kappa(\gamma)$ so that $\kappa\lambda^{-1} = (1+\sqrt{1+\gamma})^2$; that is
\[
\kappa(\gamma) = \frac{\gamma(n+k) +2n+k +l}{(1+\sqrt{1+\gamma})^2}.
\]
and now choose $\gamma = \gamma_*>0$ so that $\kappa_*= \kappa(\gamma_*)$ is minimized.  We have
\begin{equation}\label{eq: kappagammastar}
\kappa_{*} = \frac{(n+k)(n+l)}{(n+k)+(n+l)} \qquad \sqrt{1+\gamma_*} = \frac{n+l}{n+k}
\end{equation}
For these choices we obtain
\begin{equation}\label{eq: monotonicityVersion1}
\begin{aligned}
    \frac{d}{dr}\left(r^{-\kappa_*}\int_{\Omega\cap \{\psi \leq r\}} gdx\right) &\leq  -(1+\sqrt{1+\gamma_*})^2r^{-(\kappa_*+1)}\left(\int_{\Omega \cap \{\psi =r\}}|(\sqrt{1+\gamma_*}) \nabla u -x|^2_{u}\frac{g}{|\nabla \psi|}\right)\\
    &\quad -(1+\sqrt{1+\gamma_*})^2r^{-(\kappa_*+1)}\int_{\del \Omega \cap \{\psi \leq r\}} \langle \nabla \phi, \nu_{\del \Omega}\rangle_u g\\
    &\quad +(1+\sqrt{1+\gamma_*})^2(2+\gamma_*)Cr^{\frac{\delta}{2}-1} \left(r^{-\kappa_*}\int_{\Omega\cap \{\psi \leq r\}} gdx\right)
\end{aligned}
\end{equation}
We now analyze the boundary term.  We have
\[
\langle \nabla \phi, \nu_{\del \Omega}\rangle_u= (1+\gamma_*)u_iu^{ij}(\nu_{\del\Omega})_j + x_j(\nu_{\del\Omega})_{j}
\]
Since $\Omega$ is convex, and $0\in \overline{\Omega}$ we have
\[
x\cdot \nu_{\del\Omega} \geq 0
\]
with equality if and only if $0\in \del \Omega$ and $\Omega$ is conical near $0$.  Similarly, we note that $u^{ij}(\nu_{\del\Omega})_j(x)$ is normal to $\del \Omega'$ at $y=\nabla u(x)$.  Thus, since $0 \in \overline{\Omega'}$,  the convexity of $\Omega'$ yields
\[
u_iu^{ij}(\nu_{\del\Omega})_j(x) \geq 0
\]
with equality if and only if $0\in \del \Omega'$ and $\Omega'$ is conical near $0$.  Thus, we obtain the desired volume monotonicity.  
\end{proof}

\begin{remark}\label{rk: FormalProof}
      When $\Omega, \Omega'$ are $C^{2}$ and strictly convex, $g>0$ on $\overline{\Omega}$, and $g'>0$ on $\overline{\Omega'}$, and $(g,g')$ are homogeneous of degree $(l,k)$ the proof of Proposition~\ref{prop: monotonicityRegularized} yields the desired monotonicity formula, but the equality case can never occur due to the strict convexity of the boundaries.
\end{remark}

\begin{remark}\label{rk: regularityIBP}
In the proof of Proposition~\ref{prop: monotonicityRegularized}, the $C^{2,\alpha}$ boundary regularity of $u,v$ is only needed to justify the integration by parts formula~\eqref{eq: IBPsketchy}.  In particular, we observe that $C^{2,\alpha}$ boundary regularity is only needed near points of $\del (\Omega \cap \{\psi \leq r\})$.
\end{remark}

We now prove Theorem~\ref{thm: monotonicity}

\begin{proof}[Proof of Theorem~\ref{thm: monotonicity}]
 Suppose that $(u,v)$ satisfy the assumptions of the theorem.  We may assume that $(g(x),g'(y))$ are globally defined, non-negative homogeneous functions, and by assumption $\Omega \subset \{g>0\}$ and $\Omega' \subset \{g'>0\}$.  We assume that $g,g' \in C^{\alpha}_{loc}(\mathbb{R}^n)$.  Choose a sequence of positive, smooth functions $(g_\epsilon(x),g'_\epsilon(y))$, homogeneous of degree $(l,k)$ converging in $C^{\alpha}$ on compact sets to $(g(x),g'(y))$.  Define
\[
\begin{aligned}
    \widehat{g}_\epsilon(x) = g_\epsilon(x)+ \epsilon\\
    \widehat{g}'_\epsilon(y) = g'_\epsilon(y)+ \epsilon\\
\end{aligned}
\]
Fix $R \geq 1$ and suppose that $u$ is defined on $\Omega \cap B_{2R}$ (if $\Omega$ is compact, choose $R\gg 1$ so that $\Omega\subset  B_{R/2}$).  For a set $A$, let $B_{\delta}(A)$ denote the $\delta$ neighborhood of $A$.  Fix a $1 \gg \delta>0$ and define a compact set $K$ by
\[
K =  \overline{B_{3\delta/2}(\nabla u(\Omega \cap  B_{R/2}))}
\]
where $\delta$ is chosen small so that
\[
K\cap B_{\delta}(\nabla u(\Omega \cap \del B_{R}))=\emptyset.
\]

If $\Omega$ is compact, then $K=\overline {B_{\delta}(\Omega')}$ for $\delta \ll 1 \ll R$.  We choose a sequence of sets $\Upsilon_\epsilon, \Upsilon'_\epsilon$ with the following properties:
\begin{itemize}
\item[$(i)$] $\Upsilon_\epsilon, \Upsilon'_\epsilon$ are smoothly bounded domains, with $0 \in  \overline{\Upsilon_{\epsilon}}\cap \overline{\Upsilon_{\epsilon'}}$.
\item[$(ii)$] $\Upsilon_\epsilon$ is uniformly convex, and as $\epsilon \rightarrow 0$, $\Upsilon_\epsilon$ converges to $\Omega \cap B_{R}$ in the Hausdorff sense.
\item[$(iii)$] $\Upsilon'_\epsilon \supset\nabla u(\Omega \cap B_{R})$, and as $\epsilon \rightarrow 0$,  $\Upsilon'_\epsilon$ converges to $\nabla u(\Omega \cap B_{R})$ in the Hausdorff sense.
\item[$(iv)$] For $\epsilon$ sufficiently small, the portion of $\del \Upsilon'_\epsilon$ intersecting $K$ is uniformly convex.
\item[$(v)$] The mass balancing condition holds:
\[
\int_{\Upsilon_\epsilon} \widehat{g}_\epsilon(x) dx = \int_{\Upsilon'_\epsilon} \widehat{g}'_\epsilon(y) dy
\]
\end{itemize}

Such a sequence can be constructed as follows.  If $\Omega, \Omega'$ are compact, then we can take $\Omega_{\epsilon}\supset \Omega$ and $ \Omega'_{\epsilon}\supset \Omega'$ to be smooth, uniformly convex domains such that $d_{H}(\Omega, \Omega_{\epsilon})=d_{H}(\Omega', \Omega'_{\epsilon}) \leq \epsilon$.  Take $\Upsilon_\epsilon=\Omega_{\epsilon}$, and choose $\Upsilon_{\epsilon}'$ to be a dilation of $\Omega'_{\epsilon}$ such that the mass balancing condition $(v)$ holds.  Since the mass balancing condition holds for $\epsilon =0$ it follows that the dilation factors converge to $1$, and hence the $\Upsilon'_\epsilon$ converge to $\Omega'$ in the Hausdorff sense, as desired. 

Now suppose that $\Omega, \Omega'$ are non-compact convex sets.  In this case the construction is complicated slightly by the fact that $\nabla u(\Omega \cap B_{R})$ may not be convex.  First, choose $R \gg 1$ so that $\nabla v(0) \in \overline{\Omega \cap B_{R/2}}$ and take $\Omega_{\epsilon,R}$ to be a sequence of smoothly bounded, uniformly convex sets, with $\Omega_{\epsilon,R}\supset \Omega \cap B_{R}$ and such that $d_{H}(\Omega_{\epsilon,R},\Omega \cap B_{R})\leq \epsilon$ in the Hausdorff sense.  To construct $\Upsilon_\epsilon'$ we define
\[
A:= {\rm Convex Hull}(\nabla u(\Omega \cap B_{R}))
\]
and let $A_{\epsilon'} \supset A$ be a sequence of smoothly bounded, uniformly convex sets converging to $A$ in the Hausdorff sense.  For $\epsilon'$ small, we choose $\Upsilon'_{\epsilon'} \supset \nabla u( \Omega \cap B_{R})$ to be a smoothly bounded domain agreeing with $A_{\epsilon'}$ in $K$, and converging to $\nabla u( \Omega \cap B_{R})$ as $\epsilon' \rightarrow 0$.  The construction is depicted in Figure~\ref{fig: figure1}.

\bigskip
\begin{figure}[h]
\tikzset{every picture/.style={line width=0.75pt}}

\begin{tikzpicture}[x=0.75pt,y=0.75pt,yscale=-1,xscale=1]

\draw    (174,176) -- (293,379) ;
\draw    (382,180) -- (293,379) ;
\draw    (220,256) .. controls (260,226) and (271,329) .. (349,254) ;
\draw   (391.14,387.12) .. controls (391.06,399.76) and (380.76,409.94) .. (368.12,409.86) -- (222.36,408.94) .. controls (209.73,408.86) and (199.55,398.55) .. (199.63,385.92) -- (200.06,317.29) .. controls (200.14,304.65) and (210.45,294.48) .. (223.08,294.55) -- (368.84,295.47) .. controls (381.48,295.55) and (391.65,305.86) .. (391.57,318.49) -- cycle ;
\draw    (262,326) .. controls (302,296) and (278,352) .. (318,322) ;
\draw [fill={rgb, 255:red, 0; green, 0; blue, 0 }  ,fill opacity=0.08 ]   (294,386) .. controls (322,383) and (393,209) .. (334,261) .. controls (275,313) and (291,237) .. (232,236) .. controls (173,235) and (267,381) .. (294,386) -- cycle ;
\draw  [dash pattern={on 0.84pt off 2.51pt}]  (92,213) -- (147,277) -- (213,279.87) ;
\draw [shift={(216,280)}, rotate = 182.49] [fill={rgb, 255:red, 0; green, 0; blue, 0 }  ][line width=0.08]  [draw opacity=0] (8.93,-4.29) -- (0,0) -- (8.93,4.29) -- cycle    ;
\draw  [dash pattern={on 0.84pt off 2.51pt}]  (429,199) -- (253.86,255.09) ;
\draw [shift={(251,256)}, rotate = 342.24] [fill={rgb, 255:red, 0; green, 0; blue, 0 }  ][line width=0.08]  [draw opacity=0] (8.93,-4.29) -- (0,0) -- (8.93,4.29) -- cycle    ;
\draw  [dash pattern={on 0.84pt off 2.51pt}]  (451,280) -- (397,271) -- (297.66,322.62) ;
\draw [shift={(295,324)}, rotate = 332.54] [fill={rgb, 255:red, 0; green, 0; blue, 0 }  ][line width=0.08]  [draw opacity=0] (8.93,-4.29) -- (0,0) -- (8.93,4.29) -- cycle    ;
\draw  [dash pattern={on 0.84pt off 2.51pt}]  (251,382) -- (290.01,379.21) ;
\draw [shift={(293,379)}, rotate = 175.91] [fill={rgb, 255:red, 0; green, 0; blue, 0 }  ][line width=0.08]  [draw opacity=0] (8.93,-4.29) -- (0,0) -- (8.93,4.29) -- cycle    ;

\draw (344,193) node [anchor=north west][inner sep=0.75pt]   [align=left] {$\displaystyle \Omega '$};
\draw (373,379) node [anchor=north west][inner sep=0.75pt]   [align=left] {$\displaystyle K$};
\draw (68,189) node [anchor=north west][inner sep=0.75pt]   [align=left] {$\displaystyle \Upsilon _{\epsilon }$};
\draw (440,185) node [anchor=north west][inner sep=0.75pt]   [align=left] {$\displaystyle \nabla u( \Omega \cap \partial B_{R})$};
\draw (458,268) node [anchor=north west][inner sep=0.75pt]   [align=left] {$\displaystyle \nabla u( \Omega \cap \partial B_{R/2})$};
\draw (235,371) node [anchor=north west][inner sep=0.75pt]   [align=left] {$\displaystyle 0$};

\end{tikzpicture}

\caption{A diagrammatic representation of the construction of $\Upsilon_{\epsilon}'$, denoted by the gray region, in the case that $\Omega, \Omega'$ are not compact}\label{fig: figure1}
\end{figure}
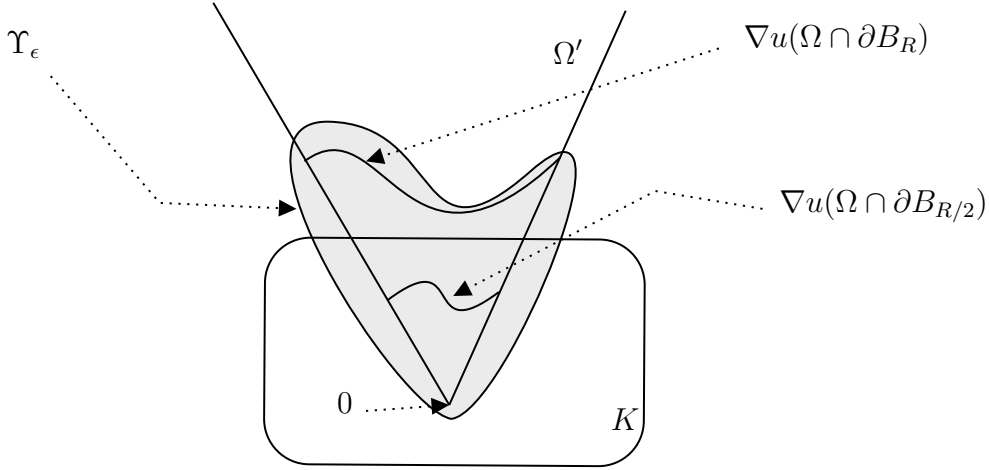

Given $\epsilon$, choose $\epsilon'(\epsilon)$ small so that $d_{H}(\Upsilon'_{\epsilon'},\nabla u( \Omega \cap B_{R})) =d_{H}(\Omega_{\epsilon,R},\Omega \cap B_{R})  $.  Finally, take $\Upsilon_\epsilon$ to be a small dilation of $\Omega_{\epsilon,R}$ in order to enforce the mass balancing.

Given $r_0>0$ after, possibly increasing $R$ we may assume that 
\[
\{x\cdot \nabla u(x) <2r_0\} \Subset \Omega \cap B_{R/2} 
\]
Let $v_\epsilon$ be the convex function solving the optimal transport problem
\begin{equation}\label{eq: monotonicityApproximation}
    \begin{aligned}
    \widehat{g}_\epsilon(\nabla v_\epsilon) \det D^2v_\epsilon&=\widehat{g}'_\epsilon(y), \quad \text{ in  }\, \Upsilon'_\epsilon \\
    \nabla v_\epsilon(\Upsilon'_\epsilon) &= \Upsilon_\epsilon
    \end{aligned}
    \end{equation}

The regularity theory for optimal transport yields the following
\begin{itemize}
    \item By \cite{Caffarelli, Caffarelli2, Caffarelli3, Jhaveri-Savin}, $v_{\epsilon}$ are uniformly bounded in $C^{1,\alpha}(\overline{\Upsilon'_{\epsilon}})$.  Furthermore, $v_\epsilon \rightarrow v$ locally uniformly in $\nabla u(\Omega \cap B_{R})$.
    \item By Caffarelli's regularity theory \cite{Caffarelli2, Caffarelli3}, and properties $(i), (ii),(iv)$ of the approximation, for every fixed $\epsilon$, $v_\epsilon$ are $C^{3,\alpha}_{loc}(\Upsilon_{\epsilon}')$, and $C^{2,\alpha}(\overline{\Upsilon_{\epsilon}'}\cap K)$.  Furthermore, since $\widehat{g}_{\epsilon}$, and $\widehat{g}'_{\epsilon}$ are uniformly bounded below, and $C^{\alpha}$,  $v_{\epsilon}$ converge to $v$ in $C_{loc}^{2,\alpha}(\Omega \cap B_{R/2})$
    \item $u_\epsilon$ are $C^{3,\alpha}_{loc}(\Upsilon_{\epsilon})$.  By $(i),(ii), (iv)$ and by \cite{Caffarelli2, Caffarelli3} , $u_{\epsilon} \in C^{2,\alpha}(\nabla v_\epsilon(K\cap\overline{ \Upsilon_\epsilon'} ))$.  Since $v_{\epsilon}$ converges to $v$ in $C^{1,\alpha}(\overline{\Upsilon'_{\epsilon}})$, for $\epsilon$ sufficiently small we have
    \[
    \{y \cdot \nabla v_{\epsilon}(y) < r_0\} \subset B_{\delta}(\{y\cdot \nabla v(y) < 2r_0\}) \subset K \cap \overline{\Upsilon'_{\epsilon}}
    \]
    and hence $u_{\epsilon} \in C^{2,\alpha}(\overline{\{x \cdot \nabla u_{\epsilon}(x) <r_0\}})$.
    \item As $\epsilon \rightarrow 0$, $u_{\epsilon}$ converges in $C^{2,\alpha}_{loc}(\Omega \cap B_{R})$ to $u$, and $ v_{\epsilon}$ converges in $C^{2,\alpha}_{loc}(\nabla u(\Omega \cap B_{R/2}))$ to $v$.
\end{itemize}
We can now apply the result of Proposition~\ref{prop: monotonicityRegularized} with $(C,\delta)=(0,1)$, keeping in mind Remark~\ref{rk: regularityIBP}.  We conclude that, for any $r_2<r_1<r_0$
\[
\begin{aligned}
 &r_1^{-\kappa_{*}}\int_{\Upsilon_{\epsilon} \cap \{\psi_{\epsilon}<r_1\}} \widehat{g}_{\epsilon}(x)dx - r_2^{-\kappa_{*}}\int_{\Upsilon_{\epsilon} \cap \{\psi_{\epsilon}<r_1\}} \widehat{g}_{\epsilon}(x)dx\\
 &\leq -(1+\sqrt{1+\gamma_*})^2\int_{r_2}^{r_1}s^{-(\kappa_*+1)}\int_{\Upsilon_{\epsilon} \cap \{\psi_{\epsilon}=s\}}|(\sqrt{1+\gamma_*})\nabla u_{\epsilon} -x|^2_{u_{\epsilon}}\frac{\widehat{g}_{\epsilon}(x)}{|\nabla \psi_{\epsilon}|}\, ds\, d\mathcal{H}^{n-1}(x)
 \end{aligned}
\]
where $\kappa_*,\gamma_*$ are given by~\eqref{eq: kappagammastar}, and $\psi_{\epsilon} = x\cdot \nabla u_{\epsilon}(x)$. The result now follows from Fatou's lemma.
\end{proof}

We can also prove an effective version of the monotonicity formula for H\"older continuous densities. 
\begin{prop}\label{prop: effective-monotonicity-for Holder-density}
    Given an optimal transport map $((\Omega, g(x)dx), (\Omega', g'(y)dy), u, v)$ where the densities $(g(x), g'(y))$ satisfy
	\[C^{-1}\leq g(x), g'(y)\leq C\]
	and
	\[[g]_{\alpha, \overline{\Omega}}+[g']_{\alpha, \overline{\Omega'}}\leq C\]
    for some $\alpha\in (0, 1)$ and $C>1$. Then there exist $A>1$, $\eps_0>0$, and $r_0>0$ depending only on $n, \alpha$, and $C$ such that for any $x_0\in\overline\Omega$, the quantity
    \[\chi(x_0, r) = e^{-Ar^{\eps_0}}r^{-n}\int_{D_r(u, x_0)}g(x)dx\]
    is monotone non-increasing in $r$ for $r\in (0, r_0]$. 
\end{prop}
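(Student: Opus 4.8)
The idea is that a strictly positive $C^{\alpha}$ density is automatically \emph{weakly sub-homogeneous of degree $0$} in the sense of Definition~\ref{defn: weaklySubHomog}, with constants controlled only by the rate at which sections of $u$ and $v$ shrink, so that the statement follows from Proposition~\ref{prop: monotonicityRegularized} applied with $l=k=0$. First I would normalize: using the shift and translation transformations we may assume $x_{0}=0$ and $u(0)=|\nabla u|(0)=0$ (hence also $v(0)=|\nabla v|(0)=0$), and using the scaling transformation — which leaves $g$ and $[g]_{\alpha}/\inf g$ unchanged and does not increase $[g']_{\alpha}/\inf g'$ — we normalize the size of the sections $S_{1}^{c}(u,0)$, $S_{1}^{c}(v,0)$; the original statement then follows by undoing the scaling. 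With this normalization in force, iterating the engulfing property (Proposition~\ref{prop: engulfing}) with $x'=x=0$ and $\underline t=0$ gives $S_{sh}^{c}(u,0)\subset \overline t\,S_{h}^{c}(u,0)$ for fixed $s,\overline t\in(0,1)$ depending only on $n$ and the doubling constant, hence $\diam S_{h}^{c}(u,0)\le C h^{\theta}$ for $h\le 1$ with $\theta=\theta(n,C)>0$; since $D_{r}(u,0)\subset S_{r^{2}}(u,0)\subset S_{Cr^{2}}^{c}(u,0)$ (Proposition~\ref{prop: properties-of-sections}(3)) this yields $\diam D_{r}(u,0)\le C r^{2\theta}$, and the same argument applied to $v$ gives $\diam D_{r}(v,0)\le C r^{2\theta}$, for all $r\le r_{0}(n,C)$.

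The analytic heart is a radial integration by parts. For a smooth potential one has $\tfrac{d}{d\rho}\big(\rho\,\omega\cdot\nabla u(\rho\omega)\big)=\omega\cdot\nabla u(\rho\omega)+\rho\,\langle D^{2}u(\rho\omega)\omega,\omega\rangle\ge 0$ — the first term by monotonicity of $\nabla u$ together with $\nabla u(0)=0$, the second by convexity — so $\psi(x)=x\cdot\nabla u(x)$ is nondecreasing along rays from the origin and $D_{r}(u,0)=\{\psi\le r^{2}\}$ is star-shaped about $0$, with radial function $R(\omega)\le \diam D_{r}(u,0)$. Writing $g=g(0)+h$ with $h(0)=0$, $|h(x)|\le[g]_{\alpha}|x|^{\alpha}$ and $\nabla g=\nabla h$, integration by parts in $\rho$ in polar coordinates gives
\[
\int_{D_{r}(u,0)}x\cdot\nabla g\,dx=\int_{S^{n-1}}R(\omega)^{n}\,h\!\big(R(\omega)\omega\big)\,d\omega-n\int_{D_{r}(u,0)}h\,dx .
\]
Since $\int_{S^{n-1}}R(\omega)^{n}\,d\omega=n\,|D_{r}(u,0)|$ and $g\ge C^{-1}$, each term on the right is at most $2n[g]_{\alpha}\big(\diam D_{r}(u,0)\big)^{\alpha}|D_{r}(u,0)|\le C' r^{2\theta\alpha}\int_{D_{r}(u,0)}g\,dx$ in absolute value. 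The identical estimate for $g'$ on $D_{r}(v,0)=\nabla u(D_{r}(u,0))$, combined with $\int_{D_{r}(u,0)}g\,dx=\int_{D_{r}(v,0)}g'\,dy$ from the Monge--Amp\`ere equation, shows $(g,g')$ is weakly sub-homogeneous of degree $(0,0)$ with constants $(C',2\theta\alpha)$, $C'=C'(n,\alpha,C)$.

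To finish I would run the approximation scheme from the proof of Theorem~\ref{thm: monotonicity}, which is simpler here because $g,g'$ are already bounded away from $0$, so no constant is added: choose smooth $g_{\eps},g'_{\eps}$ with $C^{-1}\le g_{\eps}\le C$, $[g_{\eps}]_{\alpha}\le C$, converging uniformly on compacts (by mollification), and smooth uniformly convex domains $\Upsilon_{\eps},\Upsilon'_{\eps}$ as there, with solutions $u_{\eps},v_{\eps}$ which, by Caffarelli's regularity theory, are $C^{2,\alpha}$ up to the relevant boundary portions and converge in $C^{2,\alpha}_{loc}$ to $u,v$. Recentering at $p_{\eps}:=\nabla v_{\eps}(0)\to 0$ (a harmless translation, under which $\nabla u_{\eps}(p_{\eps})=0$), the pair $(g_{\eps},g'_{\eps})$ is weakly sub-homogeneous of degree $(0,0)$ with $\eps$-uniform constants $(C',2\theta\alpha)$ (the section-diameter decay being $\eps$-uniform), so Proposition~\ref{prop: monotonicityRegularized} applies with $l=k=0$ — for which $\kappa_{*}=n/2$ and $\gamma_{*}=0$ by \eqref{eq: kappagammastar}, so the monotone quantity is $r^{-n}\mu(D_{r})$ — and shows $e^{-A r^{\eps_{0}}}r^{-n}\int_{D_{r}(u_{\eps},p_{\eps})}g_{\eps}\,dx$ is nonincreasing in $r$, with $A=\beta C'/(2\theta\alpha)$ and $\eps_{0}=2\theta\alpha$ (enlarging $A$ if needed so $A>1$, which preserves monotonicity). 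Letting $\eps\to 0$ — using $\nabla u_{\eps}\to\nabla u$ locally uniformly, $g_{\eps}\to g$ uniformly, and that $r\mapsto\mu(D_{r}(u,0))$ is continuous off a countable set — passes the two-point inequality to the limit and gives the claim on $(0,r_{0}]$. I expect the main obstacle to be the bookkeeping of the normalization, i.e.\ verifying that $A,\eps_{0},r_{0}$ can be made to depend only on $n,\alpha,C$ uniformly in $x_{0}$; the substantive content — that H\"older densities are weakly sub-homogeneous — is exactly the displayed radial identity together with the section-diameter decay.
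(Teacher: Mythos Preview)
Your proposal is correct and follows essentially the same approach as the paper: the paper's proof also reduces to showing that nondegenerate $C^{\alpha}$ densities are weakly sub-homogeneous of degree $0$ via the same radial integration-by-parts identity (Lemma~\ref{lem: weak-sub-homogeneous-for-C-alpha-density}), combined with the section-diameter decay from the $C^{1,\delta}$ theory, and then invokes Proposition~\ref{prop: monotonicityRegularized} together with the approximation argument from Theorem~\ref{thm: monotonicity}. The only cosmetic differences are that the paper writes the radial identity directly in terms of $g$ rather than $h=g-g(0)$, and cites Proposition~\ref{prop: C1-delta-estimate} for the diameter decay rather than deriving it from the engulfing property as you do.
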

The key is the following lemma, which establishes the weak sub-homogeneity of non-degenerate H\"older continuous densities at small (but fixed) scales. 
\begin{lem}\label{lem: weak-sub-homogeneous-for-C-alpha-density}
Given an optimal transport map $((\Omega, g(x)dx), (\Omega', g'(y)dy), u, v)$ where the densities $(g(x), g'(y))$ are smooth and satisfy
	\[C^{-1}\leq g(x), g'(y)\leq C\]
	and
	\[[g]_{\alpha, \overline{\Omega}}+[g']_{\alpha, \overline{\Omega'}}\leq C\]
    for some $\alpha\in (0, 1)$ and $C>1$. 
    Then there exist $r_0>0$, $B>0$, and $\eps>0$ depending only on $n, \alpha, C$ such that we have
    \[\int_{D_r}(x\cdot \nabla g)dx \leq Br^{\eps}\int_{D_r}g(x)dx.\]
    for all $r<r_0$. 
\end{lem}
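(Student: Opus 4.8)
The plan is to prove the inequality from an integration‑by‑parts identity on $D_r$ that exploits a sign condition on $\partial D_r$, and then to control the size of $D_r$ via the engulfing property. First I would reduce, by a translation and by subtracting an affine function, to $x_0=0$, $u(0)=0$, $\nabla u(0)=0$; these operations preserve all hypotheses and the set $D_r=D_r(u,0)=\{x\in\Omega:\psi(x)\le r^2\}$, where $\psi(x):=x\cdot\nabla u(x)$, and since $\nabla u(0)=0$ we have $0\in D_r$. Next I would normalize the reference section by an affine change of coordinates; this is harmless, since both $\int_{D_r}x\cdot\nabla g\,dx$ and $\int_{D_r}g\,dx$ are unchanged by such a change (so the claimed inequality is affine invariant), and I would work in a frame where $S^c_1(u,0)\subset B_C(0)$, so that the output constants depend only on $n,\alpha,C$. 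Since $g$ is smooth, Caffarelli's interior theory gives $u\in C^{2,\alpha}_{loc}(\Omega)$, hence $\psi\in C^{1,\alpha}_{loc}(\Omega)$ and (as checked below) $\nabla\psi\neq 0$ on $\{\psi=r^2\}\cap\Omega$, so $D_r$ is a set of finite perimeter whose boundary is a $C^1$ hypersurface inside $\Omega$ together with a portion of the Lipschitz set $\partial\Omega$; the divergence theorem applies after a routine inner approximation.

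Since $\operatorname{div}(g(x)x)=x\cdot\nabla g+ng$, the divergence theorem gives
\[
\int_{D_r}x\cdot\nabla g\,dx=\int_{\partial D_r}(x\cdot\nu)\,g\,d\mathcal{H}^{n-1}(x)-n\int_{D_r}g\,dx .
\]
The key point is that $x\cdot\nu\ge 0$ on all of $\partial D_r$. On $\{\psi=r^2\}\cap\Omega$ the outer normal is $\nu=\nabla\psi/|\nabla\psi|$ and
\[
x\cdot\nabla\psi=x\cdot\nabla u+\sum_{i,j}u_{ij}x_ix_j=\psi+|x|^2_u\ge r^2>0 ,
\]
which simultaneously confirms $\nabla\psi\neq 0$ there; on $\partial D_r\cap\partial\Omega$ the outer normal is $\nu_{\partial\Omega}$ and $x\cdot\nu_{\partial\Omega}\ge 0$ because $\Omega$ is convex and $0\in\overline\Omega$. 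Applying the divergence theorem to the vector field $x$ alone gives $\int_{\partial D_r}(x\cdot\nu)\,d\mathcal{H}^{n-1}=n|D_r|$, so subtracting the constant $g_0:=g(0)$ from $g$ and combining with the identity above yields
\[
\int_{D_r}x\cdot\nabla g\,dx=\int_{\partial D_r}(x\cdot\nu)(g-g_0)\,d\mathcal{H}^{n-1}(x)-n\int_{D_r}(g-g_0)\,dx .
\]

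Now set $\rho:=\sup_{x\in D_r}|x|$. On $D_r$ one has $|g-g_0|\le[g]_\alpha|x|^\alpha\le C\rho^\alpha$; using this together with $x\cdot\nu\ge 0$ and $\int_{\partial D_r}(x\cdot\nu)=n|D_r|$ gives $\int_{D_r}x\cdot\nabla g\,dx\le 2nC\rho^\alpha|D_r|$, and since $g\ge C^{-1}$,
\[
\int_{D_r}x\cdot\nabla g\,dx\le 2nC^2\,\rho^\alpha\int_{D_r}g\,dx .
\]
It then remains to establish a polynomial decay $\rho\le\Lambda r^{2\theta}$ for some $\Lambda,\theta>0$ depending only on $n,C$, valid for $r$ below a controlled $r_0$. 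Since $0\in D_r$, $\rho\le\diam D_r$, and by Propositions~\ref{prop: comparison between sections and extrinsic ball} and~\ref{prop: properties-of-sections}, $D_r\subset S_{r^2}(u,0)\subset S^c_{Cr^2}(u,0)$. Iterating the engulfing property of Proposition~\ref{prop: engulfing} with inner parameter $\underline t=0$, which gives $S^c_{sh}(u,0)\subset\overline t\,S^c_h(u,0)$ (a dilation about the common centre of mass $0$, with $\overline t=\tfrac12$ fixed and $s=s(\tfrac12)<1$), one gets $S^c_{s^kh}(u,0)\subset\overline t^{\,k}S^c_h(u,0)$ and hence
\[
\diam S^c_h(u,0)\le\Lambda_0\,(h/h_0)^{\theta}\,\diam S^c_{h_0}(u,0),\qquad \theta=\frac{\log 2}{\log(1/s)}>0 ,
\]
for all $h\le h_0$, with $\Lambda_0,\theta$ depending only on $n$ and the doubling constant of $(g,g')$, which here is universally bounded in terms of $n$ and $C$. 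Taking $h_0=1$ and the normalization $\diam S^c_1(u,0)\le 2C$, this yields $\rho\le\diam S^c_{Cr^2}(u,0)\le\Lambda r^{2\theta}$ for $r\le r_0$, and the lemma follows with $\eps=2\theta\alpha$ and $B=2nC^2\Lambda^\alpha$. The crux is this last step, the polynomial decay of the section diameters: it is precisely where convexity and non‑degeneracy enter essentially, through the engulfing property (which fails for non‑convex domains or degenerate densities) and through the comparison $|D_r|\le C\int_{D_r}g$, and it is also where one must be careful about universality of constants, since the reduction to a normalized reference section interacts with the Hölder seminorm of $g$ and is what fixes the admissible scale $r_0$.
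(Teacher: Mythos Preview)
Your proof is correct and follows essentially the same strategy as the paper: an integration by parts removes the derivative from $g$, the H\"older bound $|g-g(0)|\le C\rho^{\alpha}$ controls what remains in terms of $\rho=\diam D_r$, and then $\rho\lesssim r^{\delta}$ comes from the engulfing/$C^{1,\delta}$ machinery. The only difference is packaging. The paper writes $D_r$ in polar coordinates and integrates by parts along each ray, obtaining directly
\[
\int_0^{\phi}\frac{d}{ds}g(st)\,s^n\,ds=n\int_0^{\phi}(g(\phi t)-g(st))s^{n-1}\,ds\le C\phi^{\alpha}\int_0^{\phi}s^{n-1}\,ds,
\]
whereas you use the $n$-dimensional divergence theorem together with the sign condition $x\cdot\nu\ge 0$ on $\partial D_r$. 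For a star-shaped region these are the same identity; your version makes the role of convexity (via $x\cdot\nu_{\partial\Omega}\ge 0$) more visible, while the paper's ray-by-ray version has the advantage of avoiding any discussion of the regularity of $\partial D_r$ near $\partial\Omega$. For the diameter bound the paper simply invokes Proposition~\ref{prop: C1-delta-estimate}, which is itself proved by iterating the engulfing property, so your explicit iteration is equivalent. One small caution: the claimed affine invariance is only for the ratio of the two integrals; the H\"older seminorm $[g]_{\alpha}$ is \emph{not} invariant under the normalizing map, which is exactly the interaction you flag in your last sentence, so the dependence of $r_0$ on the data goes through the normalization just as it does (implicitly) in the paper's proof.
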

\begin{proof}
    We write the integral in radial coordinates
    \begin{equation}\label{eqn: integral-in-radial-coord}
        \int_{D_r}(x\cdot \nabla g)dx = c_n\int_{t\in S^{n-1}}\int_0^{\phi(t)}(s(\nabla_t g)(st))s^{n-1}\,dsd\sigma(t)
    \end{equation}
    where $\phi(t)$ is chosen so that $(\phi(t)t)\cdot \nabla u(\phi(t)t) = r^2$. We can integrate by parts along radial rays to get rid of the derivative on $g$
    \[\int_0^\phi g'(s)s^nds = -n\int_0^\phi g(s)s^{n-1}ds+g(\phi)\phi^{n} = n\int_0^\phi(g(\phi)-g(s))s^{n-1}ds\leq C\phi^{\alpha}\int_0^\phi s^{n-1}ds.\]
    Putting this into equation~\eqref{eqn: integral-in-radial-coord}, we get 
    \[\int_{D_r}(x\cdot \nabla g)dx \leq C(\text{diam}(D_r))^{\alpha}\int_{D_r}dx.\]
    The constant $C$ controls the doubling constant of $(g, g')$, hence by the uniform convexity estimate (Proposition~\ref{prop: C1-delta-estimate}), we have \[D_r(u, 0)\subset S_{r^2}(u, 0)\subset B_{Cr^{\delta}}(0)\] for some $\delta>0$. This implies $|g(x)-g(0)|\leq Cr^{\delta\alpha}$, therefore for $r<\frac{1}{(2C^2)^{\frac{1}{\delta\alpha}}}$, we have $g(x)\geq g(0)-\frac{C^{-1}}{2}\geq \frac{C^{-1}}{2}$, and $\text{diam}(D_r)\leq Cr^{\delta}$. Hence we have
    \[\int_{D_r}(x\cdot \nabla g)dx \leq Br^{\delta\alpha}\int_{D_r}g(x)dx.\]
\end{proof}

\begin{proof}[Proof of Proposition~\ref{prop: effective-monotonicity-for Holder-density}]
    If $(g, g')$ are smooth and $(\Omega, \Omega')$ have smooth boundaries, then this follows from combining Lemma~\ref{lem: weak-sub-homogeneous-for-C-alpha-density} and Proposition~\ref{prop: monotonicityRegularized}. For the general case, we can approximate $(\Omega, \Omega')$ by smooth domains, and $(g, g')$ by smooth densities, and apply the same approximation argument as in the proof of Theorem~\ref{thm: monotonicity}. 
\end{proof}

\section{Homogeneity of Blow-ups}\label{Sec: homogeneity-of-blow-ups}
    In this section, we use the monotonicity formula to show that the blow-ups of optimal transport maps are always homogeneous. We will treat two cases, the first when the densities are non-degenerate and H\"older continuous, and the second case when the densities are homogeneous. 
    
    First let us define what a blow-up is. 
    \begin{defn}
    $((\Omega, g(x)dx), (\Omega', g'(y)dy), u, v)$ be an optimal transport map with $0\in\overline{\Omega}\cap\overline{\Omega'}$ such that $u(0) = |\nabla u|(0) = 0$. We call an optimal transport map \[((\Omega_{\infty}, g_{\infty}(x)dx), (\Omega_{\infty}', g'_{\infty}(y)dy), u_{\infty}, v_{\infty})\] a {\bf blow-up} of $((\Omega, g(x)dx), (\Omega', g'(y)dy), u, v)$ at $0\in \overline\Omega$ along scales $h_i\to 0$, if there is $R>1$ and a sequence of positive symmetric matrices $A_{h_i}$ satisfying
    \[B_{R^{-1}}(0)\subset A_{h_i}(S_{h_i}^c(u, 0))\subset B_R(0)\]
    and a sequence of positive constants $c_i>0$, such that the sequence of rescaled optimal transport maps $((\Omega_{i}, g_i(x)dx), (\Omega'_{i}, g'_i(y)dy), u_i, v_i)$ given by
	\[\Omega_i := A_{h_i}(\Omega), \qquad \Omega_i' := h_i^{-1}A_{h_i}^{-1}(\Omega'),\]
	\[g_i(x) := c_ig(A_{h_i}^{-1}x), \qquad g'_i(y) := c_ih_i^n(\det A_{h_i})^2g'(h_iA_{h_i}y),\]
	\[u_i(x) := \frac{u(A_{h_i}^{-1}x)}{h_i}, \qquad v_i(y) := \frac{v(h_i A_{h_i}y)}{h_i},\]  converges to $((\Omega_{\infty}, g_{\infty}(x)dx), (\Omega_{\infty}', g'_{\infty}(y)dy), u_{\infty}, v_{\infty})$ in the following sense
    \begin{enumerate}
        \item $(\Omega_i, \Omega'_i)\to (\Omega_{\infty}, \Omega_{\infty}')$ in the locally Hausdorff sense. 
        \item $(g_i(x)dx, g_i'(y)dy)\to (g_{\infty}(x)dx, g_{\infty}'(y)dy)$ in the weak sense of measures. 
        \item The minimal convex extensions $(\bar u_i, \bar v_i)$ converges to the minimal convex extensions $(\bar u_{\infty}, \bar v_{\infty})$ in $C^{1, \alpha}_{loc}(\mathbb R^n)$. 
    \end{enumerate}
\end{defn}
    \subsection{With H\"older continuous density}
    In this section, we prove the existence and homogeneity of blow-ups when $(g(x), g'(y))$ are non-degenerate and H\"older continuous. 

    We first prove a uniform density estimate when $(g(x), g'(y))$ are non-degenerate and H\"older continuous on general convex domains. This estimate improves upon the previously known uniform density estimates (\cite[Section 3]{Caffarelli3}, \cite[Section 2]{Chen-Liu-Wang}), which required additional regularity assumptions on the boundary of $\partial\Omega$. 

    \begin{lem}[Uniform density]\label{lem: uniform density}
    Given an optimal transport map $((\Omega, g(x)dx), (\Omega', g'(y)dy), u, v)$ where the densities $(g(x), g'(y))$ satisfy
	\[C^{-1}\leq g(x), g'(y)\leq C\]
	and
	\[[g]_{\alpha, \overline{\Omega}}+[g']_{\alpha, \overline{\Omega'}}\leq C\]
    for some $\alpha\in (0, 1)$ and $C>1$. Then for any $x_0\in \overline\Omega$ and $0<h\leq 1$, we have
    \[\frac{|S_h(u, x_0)|}{h^{\frac n 2}}\geq c|S_1(u, 0)|\]
    for $c>0$ depending only on $n, \alpha$ and  $C$. 
\end{lem}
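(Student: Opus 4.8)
The goal is a lower bound on $|S_h(u,x_0)|$ that is uniform down to the boundary, with the correct scaling $h^{n/2}$. The natural strategy is to use the monotonicity formula for H\"older densities (Proposition~\ref{prop: effective-monotonicity-for Holder-density}) to transfer volume information from scale $1$ down to scale $h$. Concretely, I would first recall from Proposition~\ref{prop: comparison between sections and extrinsic ball} that the extrinsic ball $D_r(u,x_0)$ and the section $S_{r^2}(u,x_0)$ are comparable up to a factor of $2$, so $|D_r| \sim |S_{r^2}|$; hence it suffices to prove the estimate for $\mu(D_r(u,x_0))$ (and then use $C^{-1}\leq g \leq C$ to pass between $\mu(D_r)$ and $|D_r|$). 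Then I would apply Proposition~\ref{prop: effective-monotonicity-for Holder-density}: the quantity $\chi(x_0,r) = e^{-Ar^{\eps_0}}r^{-n}\mu(D_r(u,x_0))$ is non-increasing on $(0,r_0]$, so for $r \leq r_0$ we get $\chi(x_0, r) \geq \chi(x_0, r_0)$, i.e.
\[
\mu(D_r(u,x_0)) \geq e^{A(r^{\eps_0}-r_0^{\eps_0})} \Big(\frac{r}{r_0}\Big)^n \mu(D_{r_0}(u,x_0)) \geq c(n,\alpha,C)\, r^n\, \mu(D_{r_0}(u,x_0)).
\]
Setting $r = \sqrt{h}$ (legitimate for $h \leq r_0^2$) and using the comparability $|D_{\sqrt h}| \sim |S_h|$ together with $g \sim 1$ turns this into $|S_h(u,x_0)| \geq c\, h^{n/2}\, |D_{r_0}(u,x_0)|$. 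To finish, I need a lower bound on $|D_{r_0}(u,x_0)|$ that is uniform in $x_0$; this is where I expect the genuine work to lie.

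\textbf{The lower bound on the fixed-scale section.} The remaining point is: why is $|D_{r_0}(u,x_0)| \geq c |S_1(u,0)|$ for all $x_0 \in \overline\Omega$, with $r_0$ a fixed (small) constant? The idea is to invoke the engulfing/volume machinery for centered sections. By Proposition~\ref{prop: properties-of-sections}(2), the centered section satisfies $|S^c_{h}(u,x_0)|\,|\nabla u(S^c_h(u,x_0))| \sim h^n$, and $D_{r_0}(u,x_0) \supset \tfrac12 S_{r_0^2}(u,x_0) \supset \tfrac12 S^c_{cr_0^2}(u,x_0)\cap\overline\Omega$ by Propositions~\ref{prop: comparison between sections and extrinsic ball} and~\ref{prop: properties-of-sections}(3). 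So it suffices to lower-bound $|S^c_{cr_0^2}(u,x_0)|$, equivalently (by part (2)) to \emph{upper}-bound $|\nabla u(S^c_{cr_0^2}(u,x_0))|$. Since the target $\Omega'$ need not be bounded a priori, I would first normalize: by the hypotheses and Proposition~\ref{prop: C1-delta-estimate}, after an affine change one has $S_1^c(u,0) \subset B_K$ and $S_1^c(v,0) \subset B_K$, so all the relevant sections and their images are contained in balls of controlled radius; in particular $|\nabla u(S^c_{cr_0^2}(u,x_0))| \leq |\Omega' \cap B_{K}| \leq C$. This gives $|S^c_{cr_0^2}(u,x_0)| \geq c(n,\alpha,C)$, and since $|S_1(u,0)|$ is likewise bounded above by a constant depending only on $n,\alpha,C$ (again by the normalization), we conclude $|D_{r_0}(u,x_0)| \geq c\,|S_1(u,0)|$, which chains with the monotonicity estimate above to give the claim.

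\textbf{Main obstacle.} The subtle part is handling the case $h$ close to $1$ (not just $h \leq r_0^2$) and, more importantly, making the whole argument genuinely uniform in $x_0 \in \overline\Omega$ including boundary points — this is exactly the improvement over \cite{Caffarelli3, Chen-Liu-Wang}, where $C^{1,1}$ (or uniform convexity) of $\partial\Omega$ was used to get such density bounds. Here the point is that Proposition~\ref{prop: effective-monotonicity-for Holder-density} holds at \emph{every} $x_0 \in \overline\Omega$ with constants independent of $x_0$ (this is the force of the monotonicity formula proven via the approximation argument, where the convexity of $\Omega, \Omega'$ and the fact that $0 \in \overline\Omega \cap \overline{\Omega'}$ give the favorable sign of the boundary term regardless of boundary regularity), so no boundary regularity is needed — one only needs the centered-section and engulfing estimates of Proposition~\ref{prop: properties-of-sections}, which hold for doubling measures on arbitrary convex domains. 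For $h \in (r_0^2, 1]$ one simply notes $S_h \supset S_{r_0^2}$ and $h^{n/2} \leq 1$, so the bound at scale $r_0^2$ already implies it; the only care needed is to track that all constants depend on $n, \alpha, C$ alone.
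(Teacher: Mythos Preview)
Your core strategy---apply the effective monotonicity formula (Proposition~\ref{prop: effective-monotonicity-for Holder-density}) to $\chi(x_0,r)$, then convert between $\mu(D_r)$, $|D_r|$, and $|S_{r^2}|$ via Proposition~\ref{prop: comparison between sections and extrinsic ball} and $g\sim 1$---is exactly the paper's approach, and your treatment of the range $h\in(r_0^2,1]$ is fine.

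Where you diverge is in the ``fixed-scale'' lower bound $|D_{r_0}(u,x_0)|\gtrsim |S_1(u,x_0)|$, which you flag as the main obstacle and attack through centered sections, the product relation $|S^c_h|\,|\nabla u(S^c_h)|\sim h^n$, and a normalization to bound the gradient image. This detour is unnecessary and, as written, has gaps: you silently pass from $|S^c_h|$ to $|S^c_h\cap\overline\Omega|$ (these are not comparable without the very density estimate you are proving), and the normalization $S_1^c(u,0),\,S_1^c(v,0)\subset B_K$ is not among the hypotheses of the lemma. The paper bypasses all of this with one line of convexity: after subtracting the tangent plane at $x_0$ so that $u(x_0)=|\nabla u(x_0)|=0$, convexity gives $u(x_0+t(x-x_0))\le t\,u(x)$ for $t\in[0,1]$, hence $h_0\bigl(S_1(u,x_0)-x_0\bigr)+x_0\subset S_{h_0}(u,x_0)$ and therefore $|S_{h_0}(u,x_0)|\ge h_0^n|S_1(u,x_0)|$. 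Chained with $D_{r_0}\supset\tfrac12 S_{r_0^2}$ this immediately gives the fixed-scale bound with constants depending only on $n,\alpha,C$. So what you identified as ``where the genuine work lies'' is in fact trivial; all the content is already in the monotonicity formula, which you invoked correctly.
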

    \begin{proof}
        By Proposition~\ref{prop: effective-monotonicity-for Holder-density}, there exist $A$, $h_0$ and $\eps_0$ depending on $C$ such that for any $0<h<h_0$, we have 
        \[h^{-\frac n 2}\mu(D_{h^{\frac 1 2}}(u, 0))\geq e^{-Ah_0^{\frac{\eps_0}{2}}}h_0^{-\frac n 2}\mu(D_{h_0^{\frac 1 2}}(u, 0)).\]
        Hence
        \[|D_{h^{\frac 1 2}}(u, 0)|\geq c\mu(D_{h^{\frac 1 2}}(u, 0))\geq ce^{-Ah_0^{\frac{\eps_0}{2}}}h_0^{-\frac n 2}\mu(D_{h_0^{\frac 1 2}}(u, 0))h^{\frac n 2}\geq ch^{\frac n 2}|D_{h_0^{\frac 1 2}}(u, 0)|.\]
        Together with Proposition~\ref{prop: comparison between sections and extrinsic ball}, we have
        \[|S_h(u, 0)|\geq ch^{\frac n 2}|S_{h_0}(u, 0)|,\]
        and by convexity, we also have
        \[|S_{h_0}(u, 0)|\geq h_0^{n}|S_1(u, 0)|\]
        which gives the desired estimate. 
    \end{proof}

    \begin{theorem}[Homogeneity of blow-ups]\label{thm: homog-of-blowup-lebesgue-measure}
	   Let $((\Omega, g(x)dx), (\Omega', g'(y)dy), u, v)$ be an optimal transport map with $0\in \overline\Omega \cap \overline{\Omega'}$, and suppose $g(x), g'(y)$ are positive and H\"older continuous on $\overline\Omega, \overline{\Omega'}$ respectively. Then for any sequence of scales $h_i\to 0$, there is a subsequence along which a blow-up exists and is of the form $((\mathtt C, cdx), (\mathtt C', c'dy), u_{\infty}, v_{\infty})$ for some constants $c, c'>0$. Moreover, $(\mathtt C, \mathtt C')$ are cones about the origin and $(u_{\infty}, v_{\infty})$ are homogeneous of degree $2$. 
	\end{theorem}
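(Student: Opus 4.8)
The plan is to combine the compactness of normalized sections with the effective monotonicity formula (Proposition~\ref{prop: effective-monotonicity-for Holder-density}) to extract a homogeneous blow-up. First I would set up the normalized rescalings: for each scale $h_i$, let $A_{h_i}$ be the normalization matrix of the centered section $S^c_{h_i}(u,0)$, so that $B_{R^{-1}}\subset A_{h_i}(S^c_{h_i}(u,0))\subset B_R$ for a dimensional constant $R$ (using John's lemma together with Proposition~\ref{prop: properties-of-sections}), and choose the constants $c_i$ so that the rescaled densities $g_i$ satisfy $g_i(0)=1$, say. Since $C^{-1}\le g,g'\le C$, the rescaling constants $c_i$ and the products $\det A_{h_i}\cdot |S^c_{h_i}|$ are bounded above and below by constants depending only on $n,C$; consequently the $g_i, g_i'$ are uniformly bounded and uniformly H\"older on compact sets (with H\"older seminorm tending to zero, since the diameters shrink), so after passing to a subsequence $g_i dx \to c\, dx$ and $g_i' dy\to c'\, dy$ weakly for positive constants $c,c'$, and the domains $\Omega_i,\Omega_i'$ converge locally in the Hausdorff sense to convex sets $\mathtt C,\mathtt C'$. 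The $C^{1,\delta}$ estimate (Proposition~\ref{prop: C1-delta-estimate}), applied after the normalization, gives uniform $C^{1,\delta}$ bounds on $\bar u_i,\bar v_i$ on compact sets, so a further subsequence yields $C^{1,\alpha}_{loc}$ convergence $\bar u_i\to \bar u_\infty$, $\bar v_i\to\bar v_\infty$; a standard stability argument for Alexandrov solutions then shows $((\mathtt C, c\,dx),(\mathtt C', c'\,dy), u_\infty, v_\infty)$ is an optimal transport map, so a blow-up exists along the subsequence.

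The heart of the argument is to show the blow-up is conical and $u_\infty$ is homogeneous of degree $2$; this is where the monotonicity formula does the work. Write $\chi(r) = e^{-Ar^{\eps_0}}r^{-n}\mu(D_r(u,0))$, which by Proposition~\ref{prop: effective-monotonicity-for Holder-density} is monotone non-increasing on $(0,r_0]$ and bounded below by the uniform density estimate (Lemma~\ref{lem: uniform density}) — or simply by the doubling lower bound $\mu(D_r)\ge c r^n$ — hence $\lim_{r\to 0^+}\chi(r) =: \chi(0^+)$ exists and is finite and positive. Now I would compute the monotone quantity for each rescaled map $u_i$: because $\mu(D_r(u,0))$ and the normalization interact by exact scaling, one checks that $\chi_i(r) := e^{-\tilde A_i r^{\eps_0}} r^{-n}\mu_i(D_r(u_i,0))$ agrees with $\chi(h_i^{1/2} \rho)$-type expression up to the error coming from the $e^{-Ar^{\eps_0}}$ factor, with $\tilde A_i = A h_i^{\eps_0/2}(\text{bounded factor}) \to 0$. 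More precisely, $\chi_i(r)\to \chi(0^+)$ for every fixed $r>0$ as $i\to\infty$: the monotone quantity becomes scale-invariant in the limit. On the other hand, by the convergence established above, $\mu_i(D_r(u_i,0))\to c\,|D_r(u_\infty,0)|$ for a.e. $r$ (continuity of volume under Hausdorff convergence, Proposition following John's Lemma, together with $C^1$ convergence of the gradients so that the sets $D_r(u_i,0)$ converge), and the exponential factors tend to $1$; hence $r^{-n} c\,|D_r(u_\infty,0)|$ is \emph{constant} in $r$ on $(0,\infty)$. Finally, the limiting map $u_\infty$ has non-degenerate constant density, so it satisfies the exact (not merely effective) monotonicity formula of Theorem~\ref{thm: monotonicity} with $l=k=0$, for which $\kappa_* = n/2$ and $1+\tfrac{n+l}{n+k}=2$; the equality case of that theorem — $r^{-n/2}\mu_\infty(D_r)$, equivalently $r^{-n}|D_r(u_\infty,0)|$ after the comparison with sections, is constant — forces $\mathtt C,\mathtt C'$ to be cones about the origin and $u_\infty$ to be homogeneous of degree $2$.

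There is one technical point to handle with care: Theorem~\ref{thm: monotonicity} and its equality case are stated for homogeneous densities, and constant densities are the degree-$0$ homogeneous case, so this applies directly; but one should double-check that the passage from "$r^{-n}|D_r(u_\infty,0)|$ constant" to "$\chi_\infty(r)$ constant" is legitimate, i.e. that Proposition~\ref{prop: comparison between sections and extrinsic ball} and the exact scaling let one move freely between $D_r$ and $S_{r^2}$ in the limit; alternatively one can run the monotonicity identity of Theorem~\ref{thm: monotonicity} directly in terms of $\mu(D_r)$, which is how it is phrased.

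I expect the main obstacle to be the bookkeeping in the second paragraph: verifying that the normalized rescaling is compatible with the monotone quantity so that $\chi_i(r)\to\chi(0^+)$, and simultaneously that the sets $D_r(u_i,0)$ converge to $D_r(u_\infty,0)$ with converging masses (this needs the $C^{1}_{loc}$ convergence of gradients, strict convexity to control the boundary $\{\psi_i = r\}$, and the weak convergence of measures applied on a set whose boundary has measure zero). Once the limiting identity "$r\mapsto r^{-n}|D_r(u_\infty,0)|$ is constant" is in hand, invoking the equality case of the monotonicity formula for the limit closes the argument. A secondary, more routine point is confirming that the limiting data genuinely defines an optimal transport map in the sense of the paper's definition (minimal convex extensions, Legendre duality, Alexandrov solution), which follows from the stability of Alexandrov solutions under the established modes of convergence, exactly as in \cite{Jhaveri-Savin, Savin-Yu}.
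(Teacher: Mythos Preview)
Your proposal is correct and follows essentially the same approach as the paper: extract a convergent subsequence of normalized rescalings via compactness and the $C^{1,\delta}$ estimate, then use the effective monotonicity formula to show that the limiting volume ratio $r^{-n}|D_r(u_\infty,0)|$ is constant, and conclude by the rigidity case of Theorem~\ref{thm: monotonicity} with $l=k=0$. The paper streamlines your bookkeeping step by first deducing $|S_h(u,0)|\sim |S_h(v,0)|\sim h^{n/2}$ from Lemma~\ref{lem: uniform density} and Proposition~\ref{prop: properties-of-sections}, so that the normalization matrices take the concrete form $A_h=h^{-1/2}M_h$ with $\det M_h=1$; this makes the identity $D_r(u_i,0)=A_{h_i}(D_{h_i^{1/2}r}(u,0))$ and the passage $h_i^{-n/2}\mu(D_{h_i^{1/2}r}(u,0))\to g(0)|D_r(u_\infty,0)|$ immediate, and simultaneously gives $\|A_{h_i}^{-1}\|\lesssim h_i^{\delta}\to 0$ (via Proposition~\ref{prop: C1-delta-estimate}) so that the rescaled densities converge to constants---exactly the ``diameters shrink'' step you flagged.
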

        \begin{proof}By Lemma~\ref{lem: uniform density}, we know that,  
        \[|S_h(u, 0)|\geq ch^{\frac n 2},\qquad |S_h(v, 0)|\geq ch^{\frac n 2}.\] Since the $(g(x), g'(y))$ are both doubling measures, Proposition~\ref{prop: properties-of-sections} also implies \[|S_h(u, 0)||S_h(v, 0)|\leq Ch^n,\] and hence we have
        \[|S_h^c(u, 0)|\sim |S_h^c(v, 0)|\sim |S_h(u, 0)|\sim|S_h(v, 0)|\sim h^{\frac n 2}.\]
        Hence without loss of generality, we can assume the normalizing matrices are given by $A_h = h^{-\frac 1 2}M_h$ with $\det M_h = 1$. It follows that for any $R>1$, the rescaled domains $\Omega_i\cap B_R(0) = A_{h_i}(\Omega)\cap B_R(0)$ have a uniform lower bound on its volume, and therefore we can extract a subsequence for which the convex domains $(\Omega_i, \Omega'_i)$ converges to convex domains $(\mathtt C, \mathtt C')$ locally in the Hausdorff sense, and moreover $(\mathtt C, \mathtt C')$ have non-empty interior. 
        The rescaled potentials
        \[u_i(x) := \frac{u(h_i^{\frac 1 2}M_{h_i}^{-1}x)}{h_i}, \qquad v_i(y) = \frac{v(h_i^{\frac 1 2}M_{h_i}y)}{h_i},\]
        satisfy 
        \[(\nabla u_i)_{\sharp}(g(h_i^{\frac 1 2}M_{h_i}^{-1}x)dx) = g'(h_i^{\frac 1 2}M_{h_i}y)dy.\]
        Moreover by Proposition~\ref{prop: C1-delta-estimate}, we have $\|M_h\|+\|M_h^{-1}\|\leq h^{-\frac 1 2+\delta}$ for some $\delta>0$, and hence by the continuity of $g(x)$ and $g'(y)$, we know that 
        \[g_i(x):= g(h_i^{\frac 1 2}M_{h_i}^{-1}x)\to g(0)\] 
        and 
        \[g'_i(y):= g'(h_i^{\frac 1 2}M_{h_i}y)\to g'(0)\] as $h_i\to 0$. 
        Therefore after passing to a subsequence, we set $0<c := g(0)$ and $0<c' := g'(0)$, then it follows by Proposition~\ref{prop: C1-delta-estimate} that a subsequence of $(\bar u_i, \bar v_i)$ converges in $C^{1, \alpha}_{loc}$ to a limiting function $(\bar u_{\infty}, \bar v_{\infty})$, which gives an optimal transport map $((\mathtt C, cdx), (\mathtt C', c'dy), u_{\infty}, v_{\infty})$. Moreover, we have
        \[\lim_{i\to \infty}h_i^{-\frac n 2}\int_{D_{h_i^{\frac 1 2}r}(u, 0)}g(x)dx =\lim_{i\to \infty}\int_{D_r(u_i, 0)}g_i(x)dx = g(0)\int_{D_r(u_{\infty}, 0)}dx\]
        hence the volume ratio $\phi_{\infty}(r) = r^{-n}|D_r(u_{\infty}, 0)|$ of this blow-up is constant and equal to $\lim_{r\to 0}g(0)^{-1}e^{-Ar^{\eps_0}}r^{-n}\mu(D_r(u, 0))>0$. It follows from rigidity case of the monotonicity formula (Theorem~\ref{thm: monotonicity}) that $(\mathtt C, \mathtt C')$ are cones and $(u_{\infty}, v_{\infty})$ are homogeneous of degree 2. 
        \end{proof}

        \begin{remark}
            We note that if $((\mathtt C,cdx), (\mathtt C', c'dy), u_{\infty}, v_{\infty})$ is a blow-up of the optimal transport map $((\Omega, g(x)dx), (\Omega', g'(y)dy), u, v)$ at $0\in \partial\Omega$, it is not necessarily the case that the cones $\mathtt C$ and $\mathtt C'$ are affine equivalent to the tangent cones of $\Omega$ and $\Omega'$ at $0$. This is because the sequence of affine transformations $A_h$ may degenerate and change the shape of the convex sets in the limit. However, this is true if the sections of $u$ are ``round" (see Definition~\ref{def: round}).
        \end{remark}
        \subsection{With homogeneous density}
        In this section, we consider the case where $g(x)$ and $g'(y)$ are homogeneous of degree $l$ and $k$ respectively.  
        \begin{theorem}[Homogeneity of blow-ups]\label{thm: homog-of-blowup-density}
        Let $((\Omega, g(x)dx), (\Omega', g'(y)dy), u, v)$ be an optimal transport map with $C^{\alpha}$ densities $(g(x), g'(y))$ which are homogeneous of degree $(l, k)$ for $k, l\geq 0$. Suppose that the sections $S_h^c(u, 0)$ satisfy
        \[B_{C^{-1}h^{\frac{1}{1+\frac{n+l}{n+k}}}}(0)\subset S_h^c(u, 0)\subset B_{Ch^{\frac{1}{1+\frac{n+l}{n+k}}}}(0).\]
        Then for any sequence of scales $h_i\to 0$, there exists a subsequence along which a blow-up exists, and is of the form $((\mathtt C, g(x)dx), (\mathtt C', g'(y)dy), u_{\infty}, v_{\infty})$. Moreover $(\mathtt C, \mathtt C')$ are the tangent cones of $(\Omega, \Omega')$ at $0$, and $(u_{\infty}, v_{\infty})$ are homogeneous of degree $1+\frac{n+l}{n+k}$ and $1+\frac{n+k}{n+l}$ respectively. 
        \end{theorem}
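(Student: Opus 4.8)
\emph{Strategy.} The plan is to rescale by \emph{pure dilations} --- which is legitimate precisely because of the roundness hypothesis --- extract a subsequential limit via Caffarelli's $C^{1,\delta}$ estimate (Proposition~\ref{prop: C1-delta-estimate}), and then show the limiting map has a \emph{constant} monotonicity quantity, so that the rigidity part of Theorem~\ref{thm: monotonicity} forces the desired homogeneity. Set $\theta = 1+\frac{n+l}{n+k}$ and $\theta' = 1+\frac{n+k}{n+l}$, so that the hypothesis reads $B_{C^{-1}h^{1/\theta}}(0)\subset S^c_h(u,0)\subset B_{Ch^{1/\theta}}(0)$, and recall from~\eqref{eq: kappagammastar} that $\kappa_* = \frac{(n+l)(n+k)}{(n+l)+(n+k)} = \frac{n+l}{\theta} = \frac{n+k}{\theta'}$, so the monotone quantity of Theorem~\ref{thm: monotonicity} is $\chi(r) = r^{-2\kappa_*}\mu(D_r(u,0))$. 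Because the sections are round, I would take $A_{h_i} = h_i^{-1/\theta}\,\mathrm{Id}$, for which $B_{C^{-1}}(0)\subset A_{h_i}(S^c_{h_i}(u,0))\subset B_C(0)$, and $c_i = h_i^{-l/\theta}$. Since $g,g'$ are homogeneous of degrees $l,k$, a direct computation shows the rescaled densities are \emph{unchanged}, $g_i\equiv g$ and $g'_i\equiv g'$, and only the domains move: $\Omega_i = h_i^{-1/\theta}\Omega$, $\Omega'_i = h_i^{-1/\theta'}\Omega'$. As $A_{h_i}$ is a pure dilation about $0\in\partial\Omega$ with factor tending to $\infty$, the closed sets $\overline{\Omega_i}$ increase to the tangent cone $\mathtt C$ of $\Omega$ at $0$, and likewise $\overline{\Omega'_i}\to\mathtt C'$; both have nonempty interior, which gives the identification of the cones in the statement.

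\emph{Compactness.} From $u_i(x) = u(h_i^{1/\theta}x)/h_i$ one reads off $S^c_h(u_i,0) = h_i^{-1/\theta}S^c_{hh_i}(u,0)$, so the sections of $u_i$ are round \emph{uniformly in $i$}, with the same constant $C$; the analogous statement for $v_i$ follows from Proposition~\ref{prop: properties-of-sections}. Hence $(u_i,v_i)$ satisfy the hypotheses of Proposition~\ref{prop: C1-delta-estimate} uniformly (rescaling leaves the doubling constants of $(g,g')$ fixed), giving $\|\bar u_i\|_{C^{1,\delta}(B_R)}+\|\bar v_i\|_{C^{1,\delta}(B_R)}\le C(R)$. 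Passing to a subsequence, $(\bar u_i,\bar v_i)\to(\bar u_\infty,\bar v_\infty)$ in $C^{1,\alpha}_{loc}$, while $g\chi_{\Omega_i}\,dx\to g\chi_{\mathtt C}\,dx$ and $g'\chi_{\Omega'_i}\,dy\to g'\chi_{\mathtt C'}\,dy$ weakly (the boundaries being Lebesgue-null). I would invoke the standard stability of Alexandrov solutions under these convergences to conclude that $((\mathtt C,g\,dx),(\mathtt C',g'\,dy),u_\infty,v_\infty)$ is an optimal transport map in the sense of the definition, with $u_\infty(0)=|\nabla u_\infty|(0)=0$; since $\operatorname{int}\mathtt C\subset\{g>0\}$ and $\operatorname{int}\mathtt C'\subset\{g'>0\}$ by homogeneity, Theorem~\ref{thm: monotonicity} applies to this limit.

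\emph{Constancy of $\chi$ and rigidity.} Since $\nabla u_i(x) = h_i^{1/\theta-1}(\nabla u)(h_i^{1/\theta}x)$, one computes $x\cdot\nabla u_i(x) = h_i^{-1}\,x'\cdot\nabla u(x')$ with $x' = h_i^{1/\theta}x$, so $D_r(u_i,0) = h_i^{-1/\theta}D_{rh_i^{1/2}}(u,0)$, and using homogeneity of $g$,
\[
\int_{D_r(u_i,0)} g\,dx \;=\; h_i^{-(n+l)/\theta}\,\mu\!\left(D_{rh_i^{1/2}}(u,0)\right) \;=\; r^{2\kappa_*}\,\chi\!\left(rh_i^{1/2}\right).
\]
Roundness of the sections of $u$ together with homogeneity of $g$ give $\mu(D_\rho(u,0))\sim\rho^{2\kappa_*}$, so $\chi$ is pinched between two positive constants; being non-increasing by Theorem~\ref{thm: monotonicity}, it has a finite positive limit $\chi(0^+)$ as $\rho\to0$. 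On the other hand the sets $D_r(u_i,0)$ are uniformly bounded (again by roundness) and, for a.e. $r$, $D_r(u_i,0)\to D_r(u_\infty,0)$ in Lebesgue measure (by strict convexity of $u_\infty$, so $\{x\cdot\nabla u_\infty(x)=r^2\}$ is Lebesgue-null for a.e. $r$), so dominated convergence gives $\mu_\infty(D_r(u_\infty,0)) = \lim_i\int_{D_r(u_i,0)}g\,dx = r^{2\kappa_*}\chi(0^+)$. Thus the monotone quantity $\chi_\infty(r) = r^{-2\kappa_*}\mu_\infty(D_r(u_\infty,0))$ of the limit equals the constant $\chi(0^+)$ on a dense set, hence everywhere. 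The rigidity statement of Theorem~\ref{thm: monotonicity} now gives that $\mathtt C,\mathtt C'$ are cones about $0$ and $u_\infty$ is homogeneous of degree $\theta = 1+\frac{n+l}{n+k}$ on $D_{r_0}(u_\infty,0)$ for every $r_0$, hence on all of $\operatorname{int}\mathtt C$; by the symmetric form of $\chi$ in Proposition~\ref{prop: monotonicityRegularized} (equivalently, by Legendre duality since $\bar v_\infty = u_\infty^\star$), $v_\infty$ is homogeneous of degree $\theta' = 1+\frac{n+k}{n+l}$.

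\emph{Main obstacle.} Modulo the routine algebra, the genuinely delicate points are: (i) checking that the rescaled problems satisfy the hypotheses of Proposition~\ref{prop: C1-delta-estimate} uniformly in $i$ --- this is exactly where roundness is essential, since otherwise the normalizing matrices could degenerate and distort the limiting cones --- and that the $C^{1,\alpha}_{loc}$ limit, equipped with the weak-limit densities, is again an optimal transport map in the precise sense of the definition (in particular the identification $\bar v_\infty=u_\infty^\star$ and the Monge--Amp\`ere equations for the limit, in the possibly non-compact setting); and (ii) the bookkeeping identity $\int_{D_r(u_i,0)}g\,dx = r^{2\kappa_*}\chi(rh_i^{1/2})$ together with the passage $\chi(rh_i^{1/2})\to\chi(0^+)$, which is what converts the monotonicity of the original map into the exact scale-invariance of the blow-up.
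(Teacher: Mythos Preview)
Your proof is correct and follows essentially the same approach as the paper's: rescale by the pure dilations $A_{h_i}=h_i^{-1/\theta}\mathrm{Id}$ (justified by roundness), use Proposition~\ref{prop: C1-delta-estimate} to extract a $C^{1,\alpha}_{loc}$ subsequential limit on the tangent cones, and invoke the rigidity case of Theorem~\ref{thm: monotonicity} to obtain homogeneity. You supply considerably more detail than the paper does---in particular the explicit scaling identity $\int_{D_r(u_i,0)}g\,dx=r^{2\kappa_*}\chi(rh_i^{1/2})$ that makes transparent why the limit has constant $\chi$---but the structure is the same.
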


        \begin{proof}
            By the assumption on the roundeness of the section we can take $A_h = h^{-\frac{1}{1+\frac{n+l}{n+k}}}Id$ when normalizing $S_h^c(u, 0)$. The rescaled functions are given by
            \[u_i(x) = \frac{u(h_i^{\frac{1}{1+\frac{n+l}{n+k}}}x)}{h_i}, \qquad v_i(y) = \frac{v(h_i^{\frac{1}{1+\frac{n+k}{n+l}}}y)}{h_i},\]
            and it follows that $((h_i^{-\frac{1}{1+\frac{n+l}{n+k}}}\Omega, g(x)dx), (h_i^{-\frac{1}{1+\frac{n+k}{n+l}}}\Omega', g'(y)dy), u_i, v_i)$ are a sequence of optimal transport maps. By our rescaling, we have $(h_i^{-\frac{1}{1+\frac{n+l}{n+k}}}\Omega, h_i^{-\frac{1}{1+\frac{n+k}{n+l}}}\Omega')\to (\mathtt C, \mathtt C')$ where $(\mathtt C, \mathtt C')$ is the tangent cone of $(\Omega, \Omega')$ at $0$. By Proposition~\ref{prop: C1-delta-estimate}, after taking a subsequence, $(\bar u_i, \bar v_i)$ will converge to some limiting optimal tranpsort map $ (\bar u_{\infty}, \bar v_{\infty})$ in $C^{1, \alpha}_{loc}$. Moreover, the equality in the monotonicity formula is achieved for the limit, which implies that $(u_{\infty}, v_{\infty})$ are homogeneous of degree $1+\frac{n+l}{n+k}$ and $1+\frac{n+k}{n+l}$ respectively. 
        \end{proof}

        \begin{remark}
            Notice that the existence of a homogeneous density breaks the affine invariance of the problem, which is why we need the additional assumption on the shape of the sections to extract a homogeneous blow-up. 
        \end{remark}

        \section{Global regularity for H\"older continuous densities}\label{sec: C-1-1-eps-regularity}
	\subsection{$C^{1, 1-\eps}$ and $W^{2, p}$ regularity}

	In this section, we prove that an optimal transport map between convex domains equipped with non-degenerate H\"older continuous densities is globally $C^{1, 1-\eps}$ for any $\eps>0$ and $W^{2, p}$ for any $p>1$. This result is optimal, as $C^{1, 1}$-regularity does not always hold without any additional regularity assumption on the boundary. The case when $n = 2$ and $g = g' = 1$ was previously proved by Savin-Yu \cite{Savin-Yu}.

	\begin{prop}\label{prop: C-1-alpha bound on sections}
			Suppose $((\Omega, g(x)dx), (\Omega', g'(y)dy), u, v)$ is an optimal transport map with $0\in \overline{\Omega}$, and assume $g(x), g'(y)$ are positive and H\"older continuous in $\overline{\Omega}, \overline{\Omega'}$ respectively. Then for any $\epsilon>0$, there exist $h_0>0$ such that for all $0<h<h_0$, we have
			\[(1-\epsilon)(\frac{1}{2})^{\frac{1}{2}}S^c_h(u, 0)\subseteq S^c_\frac{h}{2}(u, 0)\subseteq (1+\epsilon)(\frac{1}{2})^{\frac{1}{2}}S^c_h(u, 0).\]	\end{prop}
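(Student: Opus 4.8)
The plan is to use the effective monotonicity formula from Proposition~\ref{prop: effective-monotonicity-for Holder-density} together with the rigidity (equality) case of the monotonicity formula (Theorem~\ref{thm: monotonicity}) and a compactness/blow-up argument. Suppose the conclusion fails: then there is an $\epsilon_0>0$ and a sequence $h_i\to 0$ for which one of the two inclusions fails at height $h_i$. Rescale the optimal transport map by normalizing the centered section $S^c_{h_i}(u,0)$; by the $C^{1,\delta}$-estimate (Proposition~\ref{prop: C1-delta-estimate}) and the volume estimates of Proposition~\ref{prop: properties-of-sections}, the normalizing matrices can be taken of the form $A_{h_i}=h_i^{-1/2}M_{h_i}$ with $\det M_{h_i}=1$, exactly as in the proof of Theorem~\ref{thm: homog-of-blowup-lebesgue-measure}. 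Passing to a subsequence, the rescaled maps converge to a blow-up $((\mathtt C, c\,dx),(\mathtt C', c'\,dy), u_\infty, v_\infty)$.

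The key point is that, because the densities are non-degenerate and H\"older continuous, Proposition~\ref{prop: effective-monotonicity-for Holder-density} forces the volume ratio $\chi(0,r)=e^{-Ar^{\epsilon_0}}r^{-n}\mu(D_r(u,0))$ to have a limit as $r\to 0$, and by the scaling computation in the proof of Theorem~\ref{thm: homog-of-blowup-lebesgue-measure} the blow-up's volume ratio $r\mapsto r^{-n}|D_r(u_\infty,0)|$ is \emph{constant}. By the rigidity case of Theorem~\ref{thm: monotonicity}, $u_\infty$ is homogeneous of degree $2$ and $\mathtt C, \mathtt C'$ are cones. For a degree-$2$ homogeneous convex function, the sections satisfy exactly $S^c_{h/2}(u_\infty,0)=(1/2)^{1/2}S^c_h(u_\infty,0)$: indeed $\bar u_\infty(\lambda x)=\lambda^2\bar u_\infty(x)$ gives $S^c_{\lambda^2 h}(u_\infty,0)=\lambda S^c_h(u_\infty,0)$ directly. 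Hence the rescaled sections of $u_i$ at heights $1$ and $1/2$ converge (in Hausdorff distance, using that the limit set has nonempty interior and $\bar u_i\to\bar u_\infty$ in $C^{1,\alpha}_{loc}$) to $\{\bar u_\infty\le 1\}$ and $\{\bar u_\infty\le 1/2\}=(1/2)^{1/2}\{\bar u_\infty\le 1\}$, which contradicts the assumed failure of the inclusion for large $i$ once $i$ is chosen so that the Hausdorff distance is smaller than $\epsilon_0$ times the inradius.

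The main obstacle I anticipate is the convergence of \emph{sections} (rather than just the potentials): one must check that $S^c_h(u_i,0)$ converges in Hausdorff distance to $S^c_h(u_\infty,0)$, uniformly for $h$ in a compact range like $[1/2,1]$. This requires knowing that the centering point $p_i$ in the definition of the centered section converges, that the sets do not ``escape to infinity'' (controlled by Proposition~\ref{prop: C1-delta-estimate} giving uniform containment in a fixed ball), and that the sublevel sets of a $C^{1,\alpha}_{loc}$-convergent sequence of strictly convex functions converge — which holds because $\bar u_\infty$ has no flat pieces on the relevant sublevel sets, so $\{\bar u_\infty\le h\pm\eta\}$ pinch down as $\eta\to 0$. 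A secondary technical point is translating between the centered section $S^c_h$ and the extrinsic ball $D_r$ used in the monotonicity formula, but this is handled by Proposition~\ref{prop: comparison between sections and extrinsic ball} and Proposition~\ref{prop: properties-of-sections}. Once these continuity statements are in place, the contradiction argument closes.
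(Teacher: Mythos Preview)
Your proposal is correct and follows essentially the same route as the paper: argue by contradiction, normalize the centered sections $S^c_{h_i}(u,0)$, extract a subsequential blow-up which (via the monotonicity formula and its rigidity case) is homogeneous of degree $2$, and then derive a contradiction from $S^c_{1/2}(u_\infty,0)=(1/2)^{1/2}S^c_1(u_\infty,0)$. The paper packages the blow-up step as a citation of Theorem~\ref{thm: homog-of-blowup-lebesgue-measure} rather than unpacking the monotonicity argument inline, and it asserts the Hausdorff convergence of centered sections without further comment---the point you correctly flag as the main technical check.
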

	\begin{proof}
		Since the two inclusion follows from exactly the same argument, we will only prove the second inclusion. Suppose it is not true, then there exist sequence $h_i\to 0$ such that 
			\[S^c_\frac{h_i}{2}(u, 0)\nsubseteq (1+\epsilon)(\frac{1}{2})^{\frac{1}{2}}S^c_{h_i}(u, 0).\]
	It follows that 
			\[A_{h_i}(S^c_\frac{h_i}{2}( u, 0))\nsubseteq (1+\epsilon)(\frac{1}{2})^{\frac{1}{2}}A_{h_i}(S^c_{h_i}( u, 0)).\]
            But by Theorem~\ref{thm: homog-of-blowup-lebesgue-measure}, we know that after taking a subsequence we can extract a homogeneous blow-up limit $((\mathtt C, cdx), (\mathtt C', c'dy), u_{\infty}, v_{\infty})$, hence we have the convergence of centered sections $A_{h_i}(S^c_\frac{h_i}{2}(u, 0))\to S_{\frac 1 2}^c( u_{\infty}, 0)$ and $A_{h_i}(S^c_{h_i}( u, 0))\to S_1^c(u_{\infty}, 0)$, which implies
            \[S_{\frac 1 2}^c( u_{\infty}, 0) \nsubseteq (1+\frac{\epsilon}{2})(\frac 1 2)^{\frac 1 2}S_1^c( u_{\infty}, 0).\]
            But this is a contradiction because $u_{\infty}$ is homogeneous of degree 2 and hence $S_{\frac 1 2}^c(u_{\infty}, 0) = (\frac 1 2)^{\frac 1 2}S_1^c(u_{\infty}, 0)$. 
	\end{proof}

        \begin{corr}\label{corr: pointwise-C-1-alpha-estimate}
            Given any optimal transport map $((\Omega, g(x)dx), (\Omega', g'(y)dy), u, v)$ with $0\in \overline{\Omega}$ and $g(x), g'(y)$ are positive and H\"older continuous in $\overline{\Omega}, \overline{\Omega'}$. Then for any $\eps>0$, there exist $h_0>0$ depending on $n, \eps$, and $((\Omega, g(x)dx), (\Omega', g'(y)dy), u, v)$ such that for all $0<h<h_0$, we have
            \[h^{\frac 1 2+\eps}S_1^c(u, 0)\subset S^c_h(u, 0)\subset h^{\frac 1 2-\eps}S_1^c(u, 0).\]
        \end{corr}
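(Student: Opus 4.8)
The plan is to integrate the one-step improvement of Proposition~\ref{prop: C-1-alpha bound on sections} over dyadic scales. Fix $\eps>0$ and choose $\delta=\delta(\eps)>0$ small enough that $1+\delta<2^{\eps}$ and $1-\delta>2^{-\eps}$ (any $\delta<\min(2^{\eps}-1,\,1-2^{-\eps})$ works). Applying Proposition~\ref{prop: C-1-alpha bound on sections} with this $\delta$ produces a scale $h_1=h_1(n,\eps,u,v)>0$, which we may assume $\leq 1$, such that
\[
(1-\delta)2^{-1/2}\,S^c_s(u,0)\ \subseteq\ S^c_{s/2}(u,0)\ \subseteq\ (1+\delta)2^{-1/2}\,S^c_s(u,0)\qquad\text{for all }0<s<h_1 .
\]
Since the centered sections $S^c_s(u,0)$ are convex bodies containing the origin (their barycenter), the relation ``$A\subseteq cB$'' is preserved under dilation about the origin, so these inclusions can be iterated.

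First I would carry out the iteration. Given $0<h<h_1/2$, pick the integer $m\geq 1$ with $h_12^{-m-1}\leq h<h_12^{-m}$ and set $t:=2^m h\in[h_1/2,h_1)$; applying the displayed inclusions at the scales $t,\,t/2,\dots,t/2^{m-1}$ (all $<h_1$) yields
\[
(1-\delta)^m 2^{-m/2}\,S^c_t(u,0)\ \subseteq\ S^c_h(u,0)\ \subseteq\ (1+\delta)^m 2^{-m/2}\,S^c_t(u,0).
\]
Using $1+\delta<2^{\eps}$, $1-\delta>2^{-\eps}$ together with $h/h_1< 2^{-m}\leq 2h/h_1$, the scalar prefactors obey
\[
(1-\delta)^m 2^{-m/2}\ \geq\ h_1^{-1/2-\eps}\,h^{1/2+\eps},\qquad (1+\delta)^m 2^{-m/2}\ \leq\ \sqrt2\,h_1^{\eps-1/2}\,h^{1/2-\eps}.
\]

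Next I would compare the sections $S^c_t(u,0)$ for $t$ in the fixed range $[h_1/2,h_1)$ with $S^c_1(u,0)$. Since $\bar u$ is strictly convex with no line in its graph, the uniform $C^{1,\delta}$-estimate of Proposition~\ref{prop: C1-delta-estimate} (together with strict convexity to bound the sections from outside) provides radii $0<\rho_0\leq R_0$ and $0<\rho_1\leq R_1$, depending only on $n,\eps,u,v$, with $B_{\rho_0}\subseteq S^c_t(u,0)\subseteq B_{R_0}$ for every $t\in[h_1/2,h_1)$ and $B_{\rho_1}\subseteq S^c_1(u,0)\subseteq B_{R_1}$. Chaining inclusions through these balls gives a constant $M=M(n,\eps,u,v)\geq 1$ with $M^{-1}S^c_1(u,0)\subseteq S^c_t(u,0)\subseteq M\,S^c_1(u,0)$, and combining with the previous step we obtain, for all $0<h<h_1/2$,
\[
c_*\,h^{1/2+\eps}\,S^c_1(u,0)\ \subseteq\ S^c_h(u,0)\ \subseteq\ C_*\,h^{1/2-\eps}\,S^c_1(u,0),
\]
with $c_*,C_*>0$ depending only on $n,\eps,u,v$.

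Finally, to remove the constants $c_*,C_*$, I would run the whole argument with $\eps/2$ in place of $\eps$, obtaining $c_*'\,h^{1/2+\eps/2}S^c_1(u,0)\subseteq S^c_h(u,0)\subseteq C_*'\,h^{1/2-\eps/2}S^c_1(u,0)$ for $h$ below the corresponding threshold $h_1'$; writing $h^{1/2\mp\eps/2}=h^{\pm\eps/2}\,h^{1/2\mp\eps}$ and choosing $h_0\leq h_1'$ small enough that $C_*'h_0^{\eps/2}\leq 1$ and $c_*'h_0^{-\eps/2}\geq 1$ yields exactly $h^{1/2+\eps}S^c_1(u,0)\subseteq S^c_h(u,0)\subseteq h^{1/2-\eps}S^c_1(u,0)$ for all $0<h<h_0$, as claimed. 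The argument is essentially bookkeeping once Proposition~\ref{prop: C-1-alpha bound on sections} is in hand; the only points requiring care are that every scale appearing in the iteration stays strictly below $h_1$ (which is why we restrict to $h<h_1/2$ and keep $t<h_1$) and the uniform two-sided ball bound for the ``unit-range'' sections $S^c_t(u,0)$, $t\asymp h_1$, which is where strict convexity of $\bar u$ enters. I do not expect a serious analytic obstacle: the content is entirely in the already-established single-scale improvement.
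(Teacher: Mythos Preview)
Your proposal is correct and follows exactly the approach the paper intends: the paper's own proof is the single sentence ``This follows by iteratively applying Proposition~\ref{prop: C-1-alpha bound on sections} to $u$ for $\epsilon$ small enough,'' and you have carefully filled in the dyadic iteration, the comparison of the ``unit-range'' sections $S^c_t(u,0)$ with $S^c_1(u,0)$, and the $\eps/2$ trick to absorb the constants. The only place where your justification is slightly loose is the uniform two-sided ball bound $B_{\rho_0}\subset S^c_t(u,0)\subset B_{R_0}$ for $t\in[h_1/2,h_1)$: invoking Proposition~\ref{prop: C1-delta-estimate} directly is a bit awkward since it concerns non-centered sections, but the claim itself follows cleanly from John's Lemma (barycenter at $0$), the outer radius bound from strict convexity of $\bar u$, and the volume lower bound from Lemma~\ref{lem: uniform density} together with Proposition~\ref{prop: properties-of-sections}(3).
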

        \begin{proof}
            This follows by iteratively applying Proposition~\ref{prop: C-1-alpha bound on sections} to $u$ for $\epsilon$ small enough. 
        \end{proof}

        Now we make this bound effective. 
        \begin{prop}\label{prop: effective-almost-C-1-1-eps}
            Let $((\Omega, g(x)dx), (\Omega', g'(y)dy), u, v)$ be an optimal transport map where the densities $(g(x), g'(y))$ satisfy
	    \[C^{-1}\leq g(x), g'(y)\leq C\]
    	   and
	    \[[g]_{\alpha, \overline{\Omega}}+     [g']_{\alpha, \overline{\Omega'}}\leq C\]
            for some $\alpha\in (0, 1)$ and $C>1$. Suppose $0\in\overline\Omega\cap\overline{\Omega'}$, and $0 = u(0) = |\nabla u|(0)$, and there exist $R>1$ such that 
            \begin{equation}
                B_{R^{-1}}(0)\subset S^c_1(u, 0)\subset B_{R}(0). 
            \end{equation}
            Then for any $\eps>0$, there exist $0<h_0<\frac 1 2$ depending only on $n, \alpha$, $C$, $\eps$, and $R$ such that we have
            \[h^{\frac{1}{2}+\eps}S_1^c(u, 0)\subset S^c_h(u, 0)\subset h^{\frac 1 2-\eps}S_1^c(u, 0)\]
            for some $h\in [h_0, \frac 1 2]$. 
        \end{prop}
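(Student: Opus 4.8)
The plan is to argue by contradiction via a compactness argument, reducing matters to Corollary~\ref{corr: pointwise-C-1-alpha-estimate} applied to a limiting optimal transport map. Suppose the statement fails for some fixed $\eps>0$. Then for a sequence $h_0^{(j)}\downarrow 0$ there are optimal transport maps $((\Omega_j,g_j(x)dx),(\Omega_j',g_j'(y)dy),u_j,v_j)$, all satisfying the hypotheses with the \emph{same} constants $\alpha,C,R$, such that for every $h\in[h_0^{(j)},\tfrac12]$ at least one of the inclusions
\[
h^{\frac12+\eps}S_1^c(u_j,0)\subset S_h^c(u_j,0)\subset h^{\frac12-\eps}S_1^c(u_j,0)
\]
fails. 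I want to contradict this for $j$ large.

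First I would set up the compactness. Starting from $B_{R^{-1}}(0)\subset S_1^c(u_j,0)\subset B_R(0)$ and the structural estimates of Proposition~\ref{prop: properties-of-sections} (part $(1)$, applied at $0$, also bounds the centering slopes $p_j$ of $S_1^c(u_j,0)$ by $|p_j|\le CR$ since $0$ lies in the section; parts $(3)$--$(4)$ pass to non-centered sections of $v_j$, and part $(1)$ for $v_j$ closes the loop), one obtains $B_{K^{-1}}(0)\subset S_1^c(v_j,0)\subset B_K(0)$ with $K=K(n,C,R)$. Proposition~\ref{prop: C1-delta-estimate} then yields uniform $C^{1,\delta}$ bounds for $\bar u_j,\bar v_j$ on every ball, $\delta=\delta(n,C)$. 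After extending the densities to $\R^n$ with the same H\"older bounds and passing to a subsequence, we get $\bar u_j\to\bar u_\infty$, $\bar v_j\to\bar v_\infty$ in $C^{1,\delta'}_{loc}$, $g_j\to g_\infty$, $g_j'\to g_\infty'$ locally uniformly, and $\Omega_j\to\Omega_\infty$, $\Omega_j'\to\Omega_\infty'$ locally in the Hausdorff sense. Exactly as in the proof of Theorem~\ref{thm: homog-of-blowup-lebesgue-measure}, the limit $((\Omega_\infty,g_\infty dx),(\Omega_\infty',g_\infty' dy),u_\infty,v_\infty)$ is an optimal transport map; it still satisfies $C^{-1}\le g_\infty,g_\infty'\le C$ and $[g_\infty]_\alpha+[g_\infty']_\alpha\le C$, has $0\in\overline{\Omega_\infty}\cap\overline{\Omega_\infty'}$ and $u_\infty(0)=|\nabla u_\infty|(0)=0$, and — since the sets $\{\bar u_j-p_j\cdot x\le 1\}=S_1^c(u_j,0)$ are uniformly bounded with volume bounded below, and the barycenter is continuous under Hausdorff convergence — satisfies $B_{R^{-1}}(0)\subset S_1^c(u_\infty,0)\subset B_R(0)$, with $p_j\to p_\infty$ along the subsequence.

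Now I would apply Corollary~\ref{corr: pointwise-C-1-alpha-estimate} to the limit with exponent $\tfrac{\eps}{2}$: there is $\bar h_0>0$ (depending on the limit) so that $h^{\frac12+\eps/2}S_1^c(u_\infty,0)\subset S_h^c(u_\infty,0)\subset h^{\frac12-\eps/2}S_1^c(u_\infty,0)$ for all $0<h<\bar h_0$. Fix once and for all $h_*:=\tfrac12\min(\bar h_0,\tfrac12)\in(0,\tfrac12)$. By the same considerations as above (uniform $C^{1,\delta}$ bounds control the boundedness of $S_{h_*}^c(u_j,0)$ via Proposition~\ref{prop: properties-of-sections}, and continuity of the barycenter gives convergence of the centering affine functions) one gets $S_1^c(u_j,0)\to S_1^c(u_\infty,0)$ and $S_{h_*}^c(u_j,0)\to S_{h_*}^c(u_\infty,0)$ in the Hausdorff sense. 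Since $h_*^{\frac12+\eps}<h_*^{\frac12+\eps/2}$ forces $h_*^{\frac12+\eps}S_1^c(u_\infty,0)$ to lie compactly inside the interior of $S_{h_*}^c(u_\infty,0)$, and symmetrically $h_*^{\frac12-\eps/2}<h_*^{\frac12-\eps}$ forces $S_{h_*}^c(u_\infty,0)$ to lie compactly inside the interior of $h_*^{\frac12-\eps}S_1^c(u_\infty,0)$, Hausdorff convergence gives, for $j$ large,
\[
h_*^{\frac12+\eps}S_1^c(u_j,0)\subset S_{h_*}^c(u_j,0)\subset h_*^{\frac12-\eps}S_1^c(u_j,0).
\]
But for $j$ large we also have $h_*\in[h_0^{(j)},\tfrac12]$, so by construction at least one of these inclusions must fail — a contradiction. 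As the only non-explicit input was $\bar h_0$, which entered solely through the fixed choice $h_*$ and was eliminated by the contradiction, the resulting $h_0$ depends only on $n,\alpha,C,\eps,R$.

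The main obstacle I anticipate is the bookkeeping around \emph{centered} sections: they are not nested in the height parameter, so the uniform boundedness of $S_{h_*}^c(u_j,0)$ and the convergence of their centering affine functions — which I use freely above — require care and rely on Proposition~\ref{prop: properties-of-sections}, Proposition~\ref{prop: engulfing}, and continuity of the barycenter under Hausdorff limits (the same ingredients that appear in the proof of Proposition~\ref{prop: C-1-alpha bound on sections}). The secondary technical point is verifying that the limiting data genuinely forms an optimal transport map obeying the same hypotheses, but this is the same argument as in the blow-up construction of Theorem~\ref{thm: homog-of-blowup-lebesgue-measure}.
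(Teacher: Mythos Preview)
Your proposal is correct and follows essentially the same compactness-and-contradiction scheme as the paper: negate, extract a subsequential limit via the uniform $C^{1,\delta}$ bounds, apply Corollary~\ref{corr: pointwise-C-1-alpha-estimate} to the limiting map with a slightly smaller exponent, and then use Hausdorff convergence of the centered sections at a fixed height to contradict the assumed failure. You are somewhat more explicit than the paper about the bookkeeping (bounding $S_1^c(v_j,0)$, controlling the centering slopes, and the ``compactly inside'' buffer needed to pass the inclusions through Hausdorff limits), but the argument is the same.
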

        \begin{proof}
            Suppose this is not true, then there exist a sequence of optimal transport maps $((\Omega_i, g_i(x)dx), (\Omega_i', g'_i(y)dy), u_i, v_i)$ satisfying the stated bounds such that given any $h\in (0, \frac 1 2]$, for $i$ sufficiently large, we have either 
            \[h^{\frac 1 2 +\eps}S_1^c(u, 0)\nsubseteq S^c_h(u_i, 0) \text{ or } S^c_h(u_i, 0) \nsubseteq h^{\frac 1 2-\eps}S_1^c(u, 0).\]
            However, this sequence of optimal transport maps is compact. Indeed by Lemma~\ref{lem: uniform density}, $(\Omega_i, \Omega'_i)$ converges up to a subsequence in the locally Hausdorff sense to some $(\Omega_{\infty}, \Omega'_{\infty})$. By the H\"older bounds, the densities $(g_i(x), g'_i(y))$ converges in $C^{\alpha'}$ for any $0<\alpha'<\alpha$ to $(g_{\infty}(x), g'_{\infty}(y))$, which is positive and H\"older continuous. Moreover, by Proposition~\ref{prop: C1-delta-estimate} the potentials $(\overline u_i, \overline v_i)$ converge in $C^{1, \gamma}_{loc}$ to a limiting potential $(\overline u_{\infty}, \overline v_{\infty})$, which gives an optimal transport map $((\Omega_{\infty}, g_{\infty}(x)dx), (\Omega_{\infty}', g'_{\infty}(y)dy), u_{\infty}, v_{\infty})$. It follows from Corollary~\ref{corr: pointwise-C-1-alpha-estimate} that for some $0<\eps'<\eps$, there exist some $0<h<\frac 1 2$ for which  
            \[h^{\frac 1 2+\eps'}S_1^c(u_{\infty}, 0)\subset S^c_h(u_{\infty}, 0)\subset h^{\frac 1 2 -\eps'}S_1^c(u_{\infty}, 0). \]           
            But this contradicts our assumption since we have 
            \[A_iS_h^c(u_i, 0)\to S_h^c(u_{\infty}, 0)\]
            and 
            \[A_iS_1^c(u_i, 0)\to S_1^c(u_{\infty}, 0).\]
        \end{proof}

        \begin{prop}\label{prop: effective-C-1-1-eps}
            Let $((\Omega, g(x)dx), (\Omega', g'(y)dy), u, v)$ be an optimal transport map where the densities $(g(x), g'(y))$ satisfy
	    \[C^{-1}\leq g(x), g'(y)\leq C\]
    	   and
	    \[[g]_{\alpha, \overline{\Omega}}+[g']_{\alpha, \overline{\Omega'}}\leq C\]
            for some $\alpha\in (0, 1)$ and $C>1$. Suppose $0\in\overline\Omega\cap\overline{\Omega'}$, and $0 = u(0) = |\nabla u|(0)$, and there exist $R>1$ such that 
            \begin{equation}
                B_{R^{-1}}(0)\subset S^c_1(u, 0)\subset B_{R}(0). 
            \end{equation}
            Then for any $\eps>0$, there exist $M>1$ depending only on $n, \alpha, C$, and $R$ such that
            \[M^{-1}h^{\frac 1 2+\eps}S_1^c(u, 0)\subset S^c_h(u, 0)\subset Mh^{\frac 1 2 -\eps}S_1^c(u, 0)\]
            for all $h\leq 1$. 
        \end{prop}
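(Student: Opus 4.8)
The plan is to upgrade the single-scale estimate of Proposition~\ref{prop: effective-almost-C-1-1-eps} to an all-scales estimate by iterating it on renormalized maps, composing the inclusions it produces, and filling in the remaining heights with the engulfing property. Concretely: apply Proposition~\ref{prop: effective-almost-C-1-1-eps} to $u$ with the given $\eps$ to obtain $h_1\in[h_0,\frac{1}{2}]$ (with $h_0=h_0(n,\alpha,C,\eps,R)$) satisfying $h_1^{\frac{1}{2}+\eps}S_1^c(u,0)\subset S_{h_1}^c(u,0)\subset h_1^{\frac{1}{2}-\eps}S_1^c(u,0)$; renormalize by the normalization matrix $A_1$ of $S_{h_1}^c(u,0)$ and the height $h_1$, passing to the optimal transport map $u_1=h_1^{-1}\bar u(A_1^{-1}\cdot)$ with densities $g_1=g\circ A_1^{-1}$ and $g_1'(y)=h_1^n(\det A_1)^2\,g'(h_1A_1y)$; and repeat. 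Since $S_t^c(u_k,0)=A_{h_k}(S_{h_kt}^c(u_{k-1},0))$, writing $H_k=h_1\cdots h_k$ and $\mathcal A_k=A_{h_k}\cdots A_{h_1}$ one gets the exact identities $u_k=H_k^{-1}u_0(\mathcal A_k^{-1}\cdot)$ and $S_1^c(u_k,0)=\mathcal A_k(S_{H_k}^c(u,0))$.

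The heart of the matter is to verify that $h_k\geq\underline h(n,\alpha,C,R,\eps)>0$ for all $k$, i.e.\ that the renormalized maps admit a uniform constant in Proposition~\ref{prop: effective-almost-C-1-1-eps}. Several inputs are uniform for free: the radius, since $B_1\subset S_1^c(u_k,0)\subset B_{n^{3/2}}$ by John's lemma; the doubling constant, which is affine invariant and hence controlled by $C$; and the lower density bound $g_k\geq C^{-1}$, since $g_k=g\circ\mathcal A_k^{-1}$. For the upper density bound one uses that the accumulated Jacobian factor carried by $g_k'$ is $\prod_{j\leq k}h_j^n(\det A_{h_j})^2=H_k^n(\det\mathcal A_k)^2$; since $\mathcal A_k$ sends $S_{H_k}^c(u,0)$ into the region between $B_1$ and $B_{n^{3/2}}$ we have $\det\mathcal A_k\sim|S_{H_k}^c(u,0)|^{-1}$, so this factor is $\sim H_k^n|S_{H_k}^c(u,0)|^{-2}$, and the uniform density estimate for the \emph{original} map (Lemma~\ref{lem: uniform density}, giving $|S_{H_k}^c(u,0)|\sim H_k^{n/2}|S_1^c(u,0)|$) forces it to be $\sim|S_1^c(u,0)|^{-2}\sim_R 1$. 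The subtle point is the density regularity used inside Proposition~\ref{prop: effective-almost-C-1-1-eps}: it enters only through the weak sub-homogeneity of Lemma~\ref{lem: weak-sub-homogeneous-for-C-alpha-density}, and although the H\"older seminorms of $g_k,g_k'$ may grow with $k$, a change of variables identifies $\int_{D_r(u_k,0)}x\cdot\nabla g_k\,dx$ and $\int_{D_r(u_k,0)}g_k\,dx$ with $|\det\mathcal A_k|$ times the corresponding integrals for $g_0$ over $D_{\sqrt{H_k}r}(u_0,0)$; hence their ratio is $\leq B(\sqrt{H_k}r)^\eps\leq Br^\eps$ by Lemma~\ref{lem: weak-sub-homogeneous-for-C-alpha-density} applied to the original densities, and likewise on the target side. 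Consequently the effective monotonicity (Proposition~\ref{prop: effective-monotonicity-for Holder-density}), the uniform density estimate, and Proposition~\ref{prop: effective-almost-C-1-1-eps} itself all hold for $u_k$ with constants depending only on $n,\alpha,C,R$.

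Granting the uniform $\underline h$, the rest is bookkeeping. Unwinding the $\mathcal A_j$ and inducting on $k$, the single-scale inclusions for $u_0,u_1,\dots,u_{k-1}$ compose --- with \emph{no} accumulation of multiplicative constants, the exponents merely adding in $H_k=\prod_j h_j$ --- to $H_k^{\frac{1}{2}+\eps}S_1^c(u,0)\subset S_{H_k}^c(u,0)\subset H_k^{\frac{1}{2}-\eps}S_1^c(u,0)$ for every $k$. Then, given $h\leq 1$, pick $k\geq 0$ with $H_{k+1}\leq h\leq H_k$ (convention $H_0=1$), so that $h/H_k\in[\underline h,1]$; the engulfing property (Proposition~\ref{prop: engulfing}, with both centers at $0$) yields, as usual, a comparison constant $M_0=M_0(n,C,\underline h)$ with $M_0^{-1}S_{H_k}^c(u,0)\subset S_h^c(u,0)\subset M_0S_{H_k}^c(u,0)$, and combining with the displayed inclusion, using $H_k\geq h$ on one side and $H_k\leq\underline h^{-1}h$ on the other, gives the conclusion with $M=M_0\underline h^{-1/2}$. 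The one genuine obstacle is the uniform control in the second paragraph: naively re-deriving the uniform density estimate for each $u_k$ from scratch would lose a definite factor per step and make $\underline h$ --- hence the threshold $h_0$ for $u_k$ --- degenerate geometrically in $k$, breaking the scheme, so all the needed bounds must instead be traced back to the original map $u$.
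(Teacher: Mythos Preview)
Your overall scheme --- iterate Proposition~\ref{prop: effective-almost-C-1-1-eps} on renormalized maps and compose the resulting inclusions --- is exactly the paper's approach, and your bookkeeping at the end (composing to get the estimate at the scales $H_k$, then using engulfing to interpolate) is correct.

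The gap is in your justification of uniformity. You claim that the H\"older hypothesis in Proposition~\ref{prop: effective-almost-C-1-1-eps} ``enters only through the weak sub-homogeneity of Lemma~\ref{lem: weak-sub-homogeneous-for-C-alpha-density}.'' This is not true: the proof of Proposition~\ref{prop: effective-almost-C-1-1-eps} is a compactness argument, and the H\"older bound on $(g_i,g_i')$ is used directly to extract a subsequence of densities converging in $C^{\alpha'}$ to a positive H\"older limit $(g_\infty,g_\infty')$, so that Corollary~\ref{corr: pointwise-C-1-alpha-estimate} applies to the limiting map. That corollary in turn rests on Theorem~\ref{thm: homog-of-blowup-lebesgue-measure}, whose proof uses pointwise continuity of the densities (to get $g_i(x)\to g(0)$), not merely weak sub-homogeneity. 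So uniform weak sub-homogeneity of $g_k,g_k'$ is not sufficient to run Proposition~\ref{prop: effective-almost-C-1-1-eps} for $u_k$ with uniform constants.

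The fix is that the H\"older seminorms of $g_k,g_k'$ are in fact uniformly bounded --- your worry that they ``may grow with $k$'' is unfounded. By the uniform density estimate one writes $\mathcal A_k=H_k^{-1/2}\mathcal M_k$ with $\det\mathcal M_k\sim 1$, and the $C^{1,\delta}$ estimate of Proposition~\ref{prop: C1-delta-estimate} (as used in the proof of Theorem~\ref{thm: homog-of-blowup-lebesgue-measure}) gives $\|\mathcal M_k\|+\|\mathcal M_k^{-1}\|\lesssim H_k^{-1/2+\delta}$ for some $\delta=\delta(n,C,R)>0$. Hence $\|\mathcal A_k^{-1}\|=H_k^{1/2}\|\mathcal M_k^{-1}\|\lesssim H_k^{\delta}\leq 1$ and likewise $\|H_k\mathcal A_k\|\lesssim H_k^{\delta}\leq 1$, so $[g_k]_\alpha\leq[g]_\alpha\|\mathcal A_k^{-1}\|^\alpha$ and $[g_k']_\alpha\lesssim[g']_\alpha\|H_k\mathcal A_k\|^\alpha$ are uniformly bounded (indeed decay). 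With this in hand, all hypotheses of Proposition~\ref{prop: effective-almost-C-1-1-eps} hold for each $u_k$ with constants depending only on $n,\alpha,C,R$, and your iteration closes.
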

        \begin{proof}
            By the uniform density estimate (Lemma~\ref{lem: uniform density}), for any sequence $h_i\to 0$ and a sequence of normalization matrix $A_{h_i} = h_i^{\frac 1 2}M_{h_i}$ with $\det M_{h_i} = 1$, the rescaled optimal transport maps $((\Omega_i, g_i(x)dx), (\Omega'_i, g'_i(y)dy), u_i, v_i)$ with $g_i(x) = g(h_i^{\frac 1 2}M_{h_i}^{-1}x)$, $g'_i(y) = g'(h_i^{\frac 1 2}M_{h_i}y)$ satisfy the hypothesis of Proposition~\ref{prop: effective-almost-C-1-1-eps} uniformly for some new constants $\alpha, C, R$. Therefore, we can iterate Proposition~\ref{prop: effective-almost-C-1-1-eps} to get the result. 
        \end{proof}
        
	From this, we immediately obtain the global $C^{1, 1-\epsilon}$ regularity of optimal transport maps. 
	\begin{theorem}[Global $C^{1, 1-\epsilon}$ regularity]\label{thm: global-C-1-1-eps-regularity}
		Let $((\Omega, g(x)dx), (\Omega', g'(y)dy), u, v)$ be an optimal transport map where the densities $(g(x), g'(y))$ satisfy
	    \[C^{-1}\leq g(x), g'(y)\leq C\]
    	   and
	    \[[g]_{\alpha, \overline{\Omega}}+[g']_{\alpha, \overline{\Omega'}}\leq C\]
            for some $\alpha\in (0, 1)$ and $C>1$. Then for any $\eps>0$, we have $u\in C^{1, 1-\eps}(\overline\Omega)$ and $v\in C^{1, 1-\eps}(\overline{\Omega'})$ and moreover, we have the estimate
        \[\|\nabla u\|_{C^{1-\eps}(\overline\Omega)}+\|\nabla v\|_{C^{1-\eps}(\overline{\Omega'})}\leq M\]
        where $M$ depends only on $n, \alpha, C$, $\eps$, and the inner and outer radius of $\Omega$ and $\Omega'$. 
	\end{theorem}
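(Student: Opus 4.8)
The plan is to upgrade the effective section estimate of Proposition~\ref{prop: effective-C-1-1-eps} into a pointwise modulus of continuity for $\nabla u$ at an arbitrary point of $\overline\Omega$, and then to patch these local estimates together using the convexity of $u$ and the trivial bound $\|\nabla u\|_{L^\infty(\overline\Omega)}\le \diam(\Omega')$. Since $u$ is convex it suffices to find constants $M$ and $\rho_0$, depending only on $n,\alpha,C,\eps$ and the inner and outer radii of $\Omega$ and $\Omega'$, such that $|\nabla u(x)-\nabla u(x_0)|\le M|x-x_0|^{1-\eps}$ for all $x_0,x\in\overline\Omega$ with $|x-x_0|\le\rho_0$; pairs at distance $\ge\rho_0$ are controlled by the gradient bound, and the estimate for $v$ follows by the symmetric argument. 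Fixing $x_0$ and subtracting the supporting linear function of $u$ at $x_0$, we may assume $x_0=0$ and $u(0)=|\nabla u|(0)=0$.

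The next step is to arrange that $B_{R^{-1}}(0)\subset S_1^c(u,0)\subset B_R(0)$, and likewise for $v$, with $R$ depending only on $n$, $C$ and the radii, so that Proposition~\ref{prop: effective-C-1-1-eps} applies. The inner inclusion is elementary: $\nabla\bar u$ takes values in $\overline{\Omega'}$, so $\bar u$ is $\diam(\Omega')$-Lipschitz, and the centering slope $p$ of $S_1^c(u,0)$ also lies in $\overline{\Omega'}$, whence $\bar u(x)-p\cdot x\le 2\diam(\Omega')|x|$, which is $\le 1$ on a ball of controlled radius. For the outer inclusion one invokes the doubling hypothesis: Lemma~\ref{lem: uniform density} together with Proposition~\ref{prop: properties-of-sections}(2) forces $|S_1^c(u,0)|\sim 1$, and Proposition~\ref{prop: properties-of-sections}(1) then controls the polar body $(S_1^c(u,0))^\circ$, hence the diameter of $S_1^c(u,0)$. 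If $S_1^c(u,0)$ is not already comparable to a ball, one first applies the affine normalization of John's lemma; the doubling constant is affine invariant and the normalizing matrix has norm controlled by the above, so the rescaled densities still satisfy the hypotheses of Proposition~\ref{prop: effective-C-1-1-eps} with constants depending only on the data.

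With this in place, Proposition~\ref{prop: effective-C-1-1-eps} applied with exponent $\eps/4$ gives, for every $h\le 1$,
\[
B_{c_1 h^{\frac12+\frac{\eps}{4}}}(0)\ \subset\ M^{-1}h^{\frac12+\frac{\eps}{4}}S_1^c(u,0)\ \subset\ S_h^c(u,0)\ \subset\ S_h(u,0),
\]
with $c_1$ depending only on the data; that is, $0\le u\le h$ on $B_{c_1 h^{1/2+\eps/4}}(0)$. Given small $\rho>0$, choose $h$ so that $c_1 h^{1/2+\eps/4}=2\rho$ and apply the elementary estimate that a nonnegative convex function $w$ satisfies $|\nabla w|\le \frac{2}{r}\sup_{B_r(0)}w$ on $B_{r/2}(0)$, taken for $w=u$ with $r=2\rho$; this yields $|\nabla u(x)|\le h/\rho=C_2\,\rho^{\frac{1-\eps/2}{1+\eps/2}}$ for $|x|\le\rho$. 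Since $\frac{1-\eps/2}{1+\eps/2}\ge 1-\eps$ and $\rho\le\diam(\Omega)$, this gives $|\nabla u(x)-\nabla u(0)|\le M|x|^{1-\eps}$ on a ball $B_{\rho_0}(0)$ with $\rho_0$ and $M$ depending only on the data, completing the reduction of the first paragraph.

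The only genuinely nontrivial point in this scheme is the uniform normalization $B_{R^{-1}}(0)\subset S_1^c(u,0)\subset B_R(0)$, valid uniformly over base points $x_0\in\overline\Omega$: this is where boundedness of $\Omega,\Omega'$ together with the doubling condition enter, and it amounts to Caffarelli's strict convexity estimate up to the boundary, recorded in the form of Propositions~\ref{prop: properties-of-sections} and~\ref{prop: C1-delta-estimate}. Everything else is a formal consequence of Proposition~\ref{prop: effective-C-1-1-eps} and the convexity of $u$.
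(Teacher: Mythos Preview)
Your approach is the same as the paper's: verify that Proposition~\ref{prop: effective-C-1-1-eps} applies at every base point with constants depending only on the data, then translate the section estimate into a H\"older bound on $\nabla u$. The paper's proof is in fact terser than yours---it just records $|\nabla u|\le R$ and $S_{h_0}(u,x_0)\supset B_r(x_0)$ for $h_0>\osc_{\overline\Omega}u$, then cites Proposition~\ref{prop: effective-C-1-1-eps}.

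Two points in your write-up need repair. First, in the displayed chain you use $S_h^c(u,0)\subset S_h(u,0)$; this is false because the centered section uses a slope $p_h\ne\nabla u(0)=0$. Proposition~\ref{prop: properties-of-sections}(3) only gives $S_{ch}^c\cap\overline\Omega\subset S_h$, so you get $u\le Ch$ on $B_{c_1h^{1/2+\eps/4}}\cap\Omega$, not on the full ball, and your convex gradient estimate needs the full ball. The fix is to work with $\bar u$: from Proposition~\ref{prop: properties-of-sections}(1) and the inner inclusion you already have, $|p_h|\lesssim h^{1/2-\eps/4}$, whence $0\le\bar u\le h+|p_h|\,c_1h^{1/2+\eps/4}\lesssim h$ on the full ball. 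Alternatively, use duality directly: $\nabla u(S_h(u,0))\subset S_{Ch}(v,0)$ by Proposition~\ref{prop: properties-of-sections}(4), and Proposition~\ref{prop: effective-C-1-1-eps} applied to $v$ bounds the latter.

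Second, your outer-inclusion argument does not give what you claim. Proposition~\ref{prop: properties-of-sections}(1) combined with $\nabla\bar u\in\overline{\Omega'}$ bounds $(S_1^c)^\circ$ from \emph{above}, which by polarity recovers only the inner inclusion; a volume bound $|S_1^c|\sim 1$ never controls diameter. Your fallback---normalize by John's lemma and say the Hölder constants are preserved because ``the normalizing matrix has norm controlled''---is circular, since bounding that norm is exactly the outer inclusion you are trying to establish. The paper avoids this by observing that for $h_0>\osc_{\overline\Omega}u$ the \emph{non-centered} section $S_{h_0}(u,x_0)$ equals $\Omega$ for every $x_0$; thus a single, $x_0$-independent affine normalization of $\Omega$ puts the initial section in round position, and the H\"older constants of $g,g'$ move only by a fixed factor depending on the radii.
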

	\begin{proof}
            Since the inner and outer radius is controlled, let us denote them by $r$ and $R$, so we have $\Omega, \Omega'\subset B_R(0)$ and $B_r(x_0)\subset \Omega$. Then from the outer radius bound for $\Omega'$, we know that $|\nabla u|\leq R$. Moreover, the inner radius bound implies that for some $h_0>R^2\geq \text{osc}_{\overline\Omega} u$, we have $S_{h_0}(u, x)\supset B_r(x_0)$ for all $x\in\overline\Omega$, which implies
            \[|S_{h_0}(u, x)|\geq cr^n>0.\]
            Therefore by Proposition~\ref{prop: effective-C-1-1-eps}, we have $\|\nabla u\|_{C^{\alpha}(\overline{\Omega})}\leq C$. The same argument applies to $v$. 
	\end{proof}
        The argument of \cite[Theorem 1.2]{Chen-Liu-Wang}, \cite[Theorem 1.1]{Savin-Yu} can be applied to also give $W^{2, p}$-estimates. 
	\begin{theorem}[Global $W^{2, p}$ regularity]\label{thm: global-W-2-p-regularity}
		Let $((\Omega, g(x)dx), (\Omega', g'(y)dy), u, v)$ be an optimal transport map where the densities $(g(x), g'(y))$ satisfy
	    \[C^{-1}\leq g(x), g'(y)\leq C\]
    	   and
	    \[[g]_{\alpha, \overline{\Omega}}+[g']_{\alpha, \overline{\Omega'}}\leq C\]
            for some $\alpha\in (0, 1)$ and $C>1$. Then for any $p>1$, $u\in W^{2, p}(\overline\Omega)$ and $v\in W^{2, p}(\overline{\Omega'})$, and moreover, we have the estimate
        \[\|\nabla u\|_{W^{1, p}(\overline\Omega)}+\|\nabla v\|_{W^{1, p}(\overline{\Omega'})}\leq M\]
        where $M$ depends only on $n, \alpha, C$, $p$, and the inner and outer radius of $\Omega$ and $\Omega'$. 
	\end{theorem}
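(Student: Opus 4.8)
The plan is to run Caffarelli's Calder\'on--Zygmund covering argument for the Monge--Amp\`ere equation, exactly as in \cite[Theorem 1.2]{Chen-Liu-Wang} and \cite[Theorem 1.1]{Savin-Yu}, using as the main new input the $\eps$-rounded section estimate of Proposition~\ref{prop: effective-C-1-1-eps}. As in the proof of Theorem~\ref{thm: global-C-1-1-eps-regularity}, one first reduces to the situation $\Omega,\Omega'\subset B_R$ with $C^{-1}\le g,g'\le C$, $[g]_{\alpha,\overline\Omega}+[g']_{\alpha,\overline{\Omega'}}\le C$, and with an $h_0\sim 1$ such that $B_r(x_0)\subset S_{h_0}(u,x)$ for all $x\in\overline\Omega$; then all centered sections $S_h^c(u,x)$ with $h\le h_0$ are defined and, by Proposition~\ref{prop: engulfing}, satisfy the engulfing property, so that $(\overline\Omega,\{S_h^c(u,\cdot)\})$ is a space of homogeneous type for the measure $\mu=g\,dx$, which is comparable to Lebesgue measure. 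Combining Proposition~\ref{prop: effective-C-1-1-eps} with Propositions~\ref{prop: properties-of-sections} and~\ref{prop: comparison between sections and extrinsic ball} gives, uniformly in $x$ and in $h\le h_0$, the volume bound $c\,h^{\frac n2+C\eps}\le|S_h^c(u,x)|\le C\,h^{\frac n2-C\eps}$ together with the quasi-homothety $M^{-1}h^{\frac12+\eps}S_1^c(u,x)\subset S_h^c(u,x)\subset Mh^{\frac12-\eps}S_1^c(u,x)$, so that the normalization matrices obey $\|A_h\|^{2}\le C\,h^{-1-C\eps}$. Since $\det D^2u=g/g'(\nabla u)\in[c,C]$ and $u$ is strictly convex, $D^2u$ has no singular part (interior regularity, cf.\ \cite{Caffarelli4}), and so $u\in W^{2,p}$ amounts to $\|D^2u\|\in L^p$; it therefore suffices to prove that for every $p<\infty$
\[
\big|\{x\in\Omega:\ \|D^2u(x)\|>t\}\big|\ \le\ C_p\,t^{-p}\qquad\text{for all }t\ge 1,
\]
with $C_p$ depending only on $n,\alpha,C,p$ and the inner and outer radii, the corresponding statement for $v$ following by symmetry.

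The heart of the matter is a \textbf{good-section decrement} in the spirit of Caffarelli's good-section lemma: there exist a universal constant $N_1=N_1(n,\alpha,C)$, a large integer $N$, and a modulus $\omega$ with $\omega(h)\le Ch^{\beta}$ for some $\beta=\beta(n,\alpha,C)>0$, such that for every centered section $S=S_h^c(u,x_0)$, $h\le h_0$, the $\mu$-measure of the set of $x\in\tfrac12 S$ for which $S_{h/N}(u,x)$ is \emph{not} comparable to a ball of radius $(h/N)^{1/2}$ with constant $N_1$ is at most $\omega(h)\,\mu(S)$. If $S_{h'}(u,x)$ is comparable to a ball of radius $(h')^{1/2}$ with constant $N_1$ for every $h'\le h$, then $u$ is twice differentiable at $x$ with $\|D^2u(x)\|\le C(N_1)$. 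Feeding the decrement into a Besicovitch-type covering and stopping-time iteration on the sections (as in \cite{Caffarelli2} and \cite[Section~3]{Jhaveri-Savin}), and using the volume bounds above to control overlaps, one finds that the set of points at which $\|D^2u\|>C(N_1)$ but this is detected only at scales $\le h_k:=h_0N^{-k}$ has $\mu$-measure $\lesssim\big(\prod_{j=1}^{k}\omega(h_j)\big)\mu(S_{h_0}^c(u,x_0))\lesssim N^{-\beta k(k+1)/2}$. Since the associated scale factor satisfies $\|A_{h_k}\|^{2}\lesssim h_k^{-1-C\eps}=h_0^{-1-C\eps}N^{(1+C\eps)k}$, this superpolynomial decay, summed over $k$ and over the finitely many sections needed to cover $\Omega$, yields $|\{\|D^2u\|>t\}|\le C_p t^{-p}$ for every $p$, which completes the proof.

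The \textbf{main obstacle}, and the only step requiring more than bookkeeping, is the good-section decrement at boundary points $x_0\in\partial\Omega$, where there is no paraboloid to compare with. I would establish it by compactness. If the decrement failed along a sequence of sections, then after normalizing by $A_{h_i}$ and passing to a subsequence, Lemma~\ref{lem: uniform density}, Proposition~\ref{prop: C1-delta-estimate} and Theorem~\ref{thm: homog-of-blowup-lebesgue-measure} produce a blow-up $((\mathtt C,c\,dx),(\mathtt C',c'\,dy),u_\infty,v_\infty)$ with $u_\infty$ homogeneous of degree $2$ on a convex cone $\mathtt C$. For such a $u_\infty$ the sections are comparable to balls off a set of arbitrarily small measure: the centered sections at the vertex are exactly self-similar, $S_h^c(u_\infty,0)=h^{1/2}S_1^c(u_\infty,0)$, and $S_1^c(u_\infty,0)$ is a bounded convex set, hence comparable to an ellipsoid by Proposition~\ref{prop: John's Lemma}; at interior points $x\in\mathtt C$, $u_\infty$ is smooth by Caffarelli's interior theory and its small sections are nearly ellipsoids of locally bounded eccentricity; and at smooth points of $\partial\mathtt C$ the blow-up of $u_\infty$ is again a paraboloid, so $u_\infty$ is $C^{1,1}$ there. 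Hence the points whose sections are not uniformly balanced lie in a neighbourhood of the lower-dimensional singular set of $\partial\mathtt C$, which has small Lebesgue measure. Since centered sections vary continuously under $C^{1,\gamma}_{loc}$-convergence of the potentials, this transfers to the approximating solutions; and because the normalized densities have H\"older seminorms tending to $0$, the fraction lost at each step tends to $0$ with $h$ --- rather than merely staying below $1-\delta_0$ --- which is exactly what yields $\omega(h)\to 0$ and hence the full range of $p$. The interior case is Caffarelli's classical good-section lemma for continuous right-hand sides, and the remaining details are as in \cite{Caffarelli2,Chen-Liu-Wang,Savin-Yu}.
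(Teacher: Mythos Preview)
Your high-level plan matches the paper exactly: the paper's proof is a single sentence stating that the result follows from the argument of \cite[Theorem~1.1]{Savin-Yu} with Proposition~\ref{prop: effective-C-1-1-eps} as the new input, and you have correctly identified both the framework and the key ingredient.

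Your detailed sketch of the boundary step, however, overcomplicates matters and contains a potential circularity. You reduce the good-section decrement at a boundary point to the corresponding statement for the degree-$2$ homogeneous blow-up $u_\infty$ on a cone $\mathtt C$, and then assert that for $u_\infty$ the sections are round off a set of small measure. But for $n\ge 3$ there is no classification of such $u_\infty$ (Proposition~\ref{prop: quadratic-poly} is strictly two-dimensional), and establishing the needed control on $D^2u_\infty$ near $\partial\mathtt C$ is essentially the original problem again, now on a possibly singular domain; nothing has been gained. Moreover your claimed rate $\omega(h)\le Ch^\beta$ is not delivered by a soft compactness argument, which at best yields $\omega(h)<1-\delta_0$ for small $h$. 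A more direct route uses Proposition~\ref{prop: effective-C-1-1-eps} quantitatively: it gives $\|A_h\|\,\|A_h^{-1}\|\lesssim h^{-C\eps}$ uniformly in the base point, so the interior $C^{2,\alpha}$ estimate applied in a normalized section compactly contained in $\Omega$ yields $\|D^2u(x)\|\lesssim d(x,\partial\Omega)^{-C'\eps}$ at each interior point; since $|\{x\in\Omega:d(x,\partial\Omega)<\rho\}|\lesssim\rho$ for bounded convex $\Omega$, one obtains $|\{\|D^2u\|>t\}|\lesssim t^{-1/(C'\eps)}$. Choosing $\eps$ small in terms of $p$ finishes, with no blow-up analysis required.
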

        \begin{proof}
            This follows from the argument of \cite[Theorem 1.1]{Savin-Yu} using Proposition~\ref{prop: effective-C-1-1-eps}. 
        \end{proof}

        \subsection{$C^{2, \alpha}$-regularity for domains with $C^{1, \alpha}$-boundary}
        In this section, we show that if $\Omega, \Omega'$ have $C^{1, \beta}$-boundary, and $g(x), g'(y)$ are $C^{\alpha}$, then in fact the solution to optimal transport problem  is $C^{2, \min(\alpha, \beta)}$. This is sharp, and improves previous results of Caffarelli \cite{Caffarelli3} and Chen-Liu-Wang \cite{Chen-Liu-Wang}, which required the domains to have $C^{1, 1}$-boundary. 

        First, we prove a lemma which says that if the boundary is $C^{1, \alpha}$, then any blow-up has to live on a half-space. 
        \begin{lem}\label{lem: blow-up-is-half-space}
	Let $((\Omega, g(x)dx), (\Omega', g'(y)dy), u, v)$ be an optimal transport map where the densities $g(x), g'(y)$ are positive and H\"older continuous in $\overline\Omega, \overline{\Omega'}$ respectively. 
    Suppose $0\in\partial\Omega\cap\partial\Omega'$ with $u(0) = |\nabla u|(0) = 0$,  $\Omega\subset \{x_n\geq 0\}$ and locally near $0$,
    \[\overline\Omega\cap B_1 =  \{(x', x_n)\in \mathbb R^{n-1}\times \mathbb R: x_n\geq \phi(x')\}\cap B_1\]
    where $\phi:B_1^{n-1}\to \mathbb R$ is convex and for all $x'\in B_{1}^{n-1}$, we have
	\[0\leq\phi(x')\leq C|x'|^{1+\alpha}.\]
	Given $h_i\to 0$, let $A_{h_i}$ be the sequence of normalizing matrices for $S_{h_i}^c(u, 0)$, then for any $R>1$, one has
	\[\lim_{h_i\to 0}\frac{|A_{h_i}\Omega\cap B_R|}{|B_R|} \to \frac 1 2.\]
    In particular, the domains $A_{h_i}\Omega$ and $A_{h_i}\{x_n> 0\}$ both converge to some half-space $H$. 
    \end{lem}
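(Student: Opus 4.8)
The plan is to exploit that the hypothesis traps $\Omega$ between the half-space $\{x_n\geq 0\}$ and the region $\{x_n\geq C|x'|^{1+\alpha}\}$ inside $B_1$, so that the ``defect'' $\{x_n\geq 0\}\setminus\Omega$ is thin near $0$. Since $0$ is the barycenter of $S^c_{h_i}(u,0)$, the normalizing matrix $A_{h_i}$ acts linearly; in particular $A_{h_i}\Omega\subseteq A_{h_i}\{x_n\geq 0\}$, and $A_{h_i}\{x_n\geq 0\}$ is a half-space through the origin, so $|A_{h_i}\{x_n\geq 0\}\cap B_R|=\tfrac12|B_R|$. Hence it suffices to prove that $|A_{h_i}(\{x_n\geq 0\}\setminus\Omega)\cap B_R|\to0$ for each fixed $R>1$; the stated limit then follows immediately, and the identification of the limiting domain follows by a compactness argument.

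First I would record the relevant size bounds on $A_{h_i}$. Fix $\epsilon'>0$ small, to be chosen depending only on $n,\alpha$. By Corollary~\ref{corr: pointwise-C-1-alpha-estimate} there is $h_0>0$ so that $h^{1/2+\epsilon'}S^c_1(u,0)\subseteq S^c_h(u,0)\subseteq h^{1/2-\epsilon'}S^c_1(u,0)$ for all $h<h_0$. Since the John ellipsoid of $S^c_h(u,0)$ lies inside $S^c_h(u,0)$ and is mapped to $B_1$ by $A_h$ (Proposition~\ref{prop: John's Lemma}), the right-hand inclusion yields $\|A_h^{-1}\|\leq Ch^{1/2-\epsilon'}$, while the relation $(\det A_h)|S^c_h(u,0)|\sim1$ together with the left-hand inclusion yields $\det A_h\leq Ch^{-n/2-n\epsilon'}$, with $C$ independent of $h$. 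Now fix $R>1$. For $h_i$ small, $A_{h_i}^{-1}(B_R)\subseteq B_{\rho_i}$ with $\rho_i:=CR\,h_i^{1/2-\epsilon'}<1$, so the graph hypothesis gives
\[
\big(\{x_n\geq 0\}\setminus\Omega\big)\cap A_{h_i}^{-1}(B_R)\subseteq\{\,0\leq x_n<C|x'|^{1+\alpha},\ |x'|\leq\rho_i\,\},
\]
a set of volume at most $C\rho_i^{1+\alpha}|B^{n-1}_{\rho_i}|\leq C\rho_i^{n+\alpha}$. Changing variables by the linear map $A_{h_i}$ and using the two bounds,
\[
\big|A_{h_i}\big(\{x_n\geq 0\}\setminus\Omega\big)\cap B_R\big|=\det(A_{h_i})\,\big|\big(\{x_n\geq 0\}\setminus\Omega\big)\cap A_{h_i}^{-1}(B_R)\big|\leq C\,h_i^{-n/2-n\epsilon'}\big(CR\,h_i^{1/2-\epsilon'}\big)^{n+\alpha}=C\,h_i^{\alpha/2-(2n+\alpha)\epsilon'}.
\]
Choosing $\epsilon'<\alpha/(2(2n+\alpha))$ makes the exponent positive, so this tends to $0$, and therefore $|A_{h_i}\Omega\cap B_R|\to\tfrac12|B_R|$, which is the first assertion.

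For the last sentence, pass to a subsequence so that the inner normals of the half-spaces $A_{h_i}\{x_n\geq 0\}$ converge; then $A_{h_i}\{x_n\geq 0\}$ converges locally in the Hausdorff sense to a closed half-space $H$ through $0$. By Hausdorff-compactness of convex sets a further subsequence gives $A_{h_i}\Omega\to\Omega_\infty$, a closed convex set with $\Omega_\infty\subseteq H$, and by continuity of volume together with the first part, $|\Omega_\infty\cap B_R|=|H\cap B_R|$ for all $R$; a closed convex subset of a half-space with full Lebesgue measure in every ball must be the whole half-space, so $\Omega_\infty=H$. (This is consistent with, and sharpens, the conclusion of Theorem~\ref{thm: homog-of-blowup-lebesgue-measure} that the blow-up domain is a cone: here that cone is identified as the half-space $H$.) I expect the only delicate point to be the scaling bookkeeping above: the normalization inflates volumes by the large factor $\det A_{h_i}\sim h_i^{-n/2}$ while $B_R$ pulls back to a ball of radius $\sim h_i^{1/2}$, so one must verify that the thickness of the defect in the $x_n$-direction — of order $(\text{radius})^{1+\alpha}$ — beats this inflation. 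This is precisely where $C^{1,\alpha}$ (rather than merely $C^1$) control of $\phi$, together with the almost-$C^{1,1}$ section estimate of Corollary~\ref{corr: pointwise-C-1-alpha-estimate}, is needed.
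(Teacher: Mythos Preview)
Your proof is correct and follows essentially the same approach as the paper: both arguments show that the defect $(\{x_n\geq 0\}\setminus\Omega)\cap A_{h_i}^{-1}(B_R)$ has volume of order $(\text{radius})^{n+\alpha}$, and then use the almost-$C^{1,1}$ section estimate to verify that the normalization does not inflate this defect too much. The only cosmetic difference is bookkeeping: the paper writes $A_h=h^{-1/2}M_h$ with $\det M_h=1$ and uses $\|M_h\|\ll h^{-\epsilon}$ (obtaining the error exponent $\alpha/2-\epsilon(n+\alpha)$), whereas you work directly with the bounds $\|A_h^{-1}\|\leq Ch^{1/2-\epsilon'}$ and $\det A_h\leq Ch^{-n/2-n\epsilon'}$ (obtaining $\alpha/2-(2n+\alpha)\epsilon'$); these are equivalent parametrizations of the same estimate.
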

    \begin{proof}
        Without loss of generality, we can assume $A_h = h^{-\frac 1 2}M_h$ with $\det M_h=1$ and from Proposition~\ref{prop: effective-C-1-1-eps} we also have $\|M_h\|\ll h^{-\eps}$ for any $\eps>0$. Therefore, we have
	\[A_h\Omega = M_h\{x_n\geq h^{-\frac 1 2}\phi(h^{\frac 1 2}x')\}\]
	and we have
	\[|A_h\Omega\cap B_R| = |\{x_n\geq h^{-\frac 1 2}\phi(h^{\frac 1 2}x')\}\cap M_h^{-1}B_R|\supset |\{x_n\geq Ch^{\frac{\alpha}{2}}|x'|^{1+\alpha}\}\cap M_h^{-1}B_R|.\]
    We can bound $|A_h\Omega\cap B_R|$ as follows
    \begin{align}
    \frac{1}{2}|B_R|&\geq |A_h\Omega\cap B_R|\\
    &\geq |\{x_n\geq Ch^{\frac{\alpha}{2}}|x'|^{1+\alpha}\}\cap M_h^{-1}B_R|\\
    &= |\{x_n\geq 0\}\cap M_h^{-1}B_R|-|\{0\leq x_n\leq Ch^{\frac{\alpha}{2}}|x'|^{1+\alpha}\}\cap M_h^{-1}B_R|\\
    &\geq \frac 1 2 |B_R|- |\{0\leq x_n\leq Ch^{\frac{\alpha}{2}}|x'|^{1+\alpha}\}\cap B_{h^{-\eps}R}|\\
    & \geq \frac 1 2 |B_R|-Ch^{\frac{\alpha}{2}}\int_{B_{h^{-\eps}R}^{n-1}}|x'|^{1+\alpha}dx'\\
    & \geq  \frac 1 2 |B_R|-C(n, \alpha, R)h^{\frac{\alpha}{2}-\eps(n+\alpha)}
    \end{align}
    where from third line to fourth line, we used $\|M_h\|\ll h^{-\eps}$. It follows that if $\eps$ is sufficiently small, then as $h\to 0$, we have
    \[|A_h\Omega\cap B_R|\to \frac 1 2 |B_R|. \]
    It's clear the $A_{h_i}\{x_n\geq 0\}$ must converge to some half-space $H$, and by the volume bound, the $A_{h_i}\Omega$ is then a subset of $H$, but has the same volume, therefore it must also be all of $H$ as well. 
    \end{proof}

    \begin{lem}[Obliqueness]\label{lem: obliqueness}
        Let $((\Omega, g(x)dx), (\Omega', g'(y)dy), u, v)$ be an optimal transport map where the densities $g(x), g'(y)$ are positive and H\"older continuous in $\overline\Omega, \overline{\Omega'}$ respectively. Suppose $u(0) = |\nabla u|(0) = 0$, and the domains $\Omega$ and $\Omega'$ are $C^{1, \alpha}$ at $0$ for some $\alpha>0$. If $l_{\Omega}, l_{\Omega'}$ are the defining functions for the supporting hyperplane of $\Omega$ and $\Omega'$ at $0$, then we have
        \[l_{\Omega}\cdot l_{\Omega'}>0.\]
    \end{lem}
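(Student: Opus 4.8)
The strategy is to argue by contradiction via a blow-up at $0$, which reduces the statement to the special case of a homogeneous optimal transport map between two half-spaces; in that model case obliqueness follows from a short convexity argument. Throughout, after a rotation I assume $\nabla l_\Omega = e_n$, so $\Omega \subset \{x_n\ge 0\}$ near $0$, and I write $\nu := \nabla l_\Omega = e_n$ and $\nu' := \nabla l_{\Omega'}$ (normalized to be unit inward normals), so that $l_\Omega\cdot l_{\Omega'}$ is a positive multiple of $\langle\nu,\nu'\rangle$, which we must show is positive. Assume for contradiction that $\langle\nu,\nu'\rangle \le 0$.

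By Theorem~\ref{thm: homog-of-blowup-lebesgue-measure} there is a sequence $h_i\to 0$ along which the rescalings (with normalizing matrices $A_{h_i}=h_i^{-1/2}M_{h_i}$, $\det M_{h_i}=1$) converge to a blow-up $((\mathtt C, c\,dx),(\mathtt C',c'\,dy),u_\infty,v_\infty)$ that is homogeneous of degree $2$, with $c=g(0)>0$ and $c'=g'(0)>0$. Since $\partial\Omega$ is $C^{1,\alpha}$ at $0$, Lemma~\ref{lem: blow-up-is-half-space} shows $\mathtt C$ is a half-space $H\ni 0$, equal to $\lim_i A_{h_i}\{x_n\ge 0\}$; applying the same lemma to the inverse transport map $v$ (using that $\partial\Omega'$ is $C^{1,\alpha}$ at $0$, and that the target rescalings $h_i^{-1}A_{h_i}^{-1}(\Omega')$ are, up to bounded factors, normalizations of the sections $S^c_{h_i}(v,0)$ by Proposition~\ref{prop: properties-of-sections}) shows $\mathtt C'$ is a half-space $H'\ni 0$. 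Write $\nu_H,\nu_{H'}$ for the inward unit normals, so $\Omega_\infty=\mathrm{int}\,H=\{\langle\nu_H,\cdot\rangle>0\}$, $\partial H=\nu_H^\perp$, and similarly for $H'$. Since $H=\lim_i A_{h_i}\{\langle\nu,\cdot\rangle\ge 0\}$ and $H'=\lim_i(h_i^{-1}A_{h_i}^{-1})\{\langle\nu',\cdot\rangle\ge 0\}$, after passing to a further subsequence so the relevant unit vectors converge we get $\nu_H=\lim_i A_{h_i}^{-T}\nu/|A_{h_i}^{-T}\nu|$ and $\nu_{H'}=\lim_i A_{h_i}\nu'/|A_{h_i}\nu'|$ (using that $A_{h_i}$ is symmetric and scaling does not affect a half-space). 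The elementary identity $\langle A^{-T}a,Ab\rangle=\langle a,b\rangle$ gives $\langle A_{h_i}^{-T}\nu,A_{h_i}\nu'\rangle=\langle\nu,\nu'\rangle$ for every $i$; dividing by the positive number $|A_{h_i}^{-T}\nu|\,|A_{h_i}\nu'|$ and letting $i\to\infty$ yields $\langle\nu_H,\nu_{H'}\rangle\le 0$. It remains to derive a contradiction from a degree-$2$ homogeneous transport map between half-spaces $H,H'$ with $\langle\nu_H,\nu_{H'}\rangle\le 0$.

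Here I split into two cases. If $\langle\nu_H,\nu_{H'}\rangle<0$, then $-\nu_H\in\Omega'_\infty$, so $\nabla v_\infty(-\nu_H)\in\Omega_\infty$ and hence $b:=\langle\nabla v_\infty(-\nu_H),\nu_H\rangle>0$; by degree-$1$ homogeneity of $\nabla v_\infty$ we get $\frac{d}{dt}v_\infty(-t\nu_H)=\langle\nabla v_\infty(-t\nu_H),-\nu_H\rangle=-tb$ for $t>0$, which is strictly decreasing, contradicting convexity of $t\mapsto v_\infty(-t\nu_H)$. If $\langle\nu_H,\nu_{H'}\rangle=0$, then $\nu_{H'}\in\nu_H^\perp=\partial H$ is a direction tangent to $\partial H$; for $x\in\partial H$ the point $\nabla\bar u_\infty(x)$ lies in $\overline{\Omega'_\infty}$ and cannot lie in $\Omega'_\infty$, since the Legendre duality $\nabla\bar v_\infty\circ\nabla\bar u_\infty=\mathrm{id}$ (valid because $\bar u_\infty,\bar v_\infty\in C^{1,\alpha}_{loc}$ are mutually dual) would then force $x=\nabla\bar v_\infty(\nabla\bar u_\infty(x))\in\Omega_\infty$; hence $\langle\nu_{H'},\nabla u_\infty(x)\rangle=0$ for all $x\in\partial H$, which (as $\nu_{H'}$ is tangent to $\partial H$) says $u_\infty$ is constant in the direction $\nu_{H'}$ along $\partial H$, so $u_\infty(t\nu_{H'})=u_\infty(0)=0$ for all $t\in\mathbb R$, contradicting strict convexity of $u_\infty$ together with $u_\infty(0)=|\nabla u_\infty|(0)=0$ (which forces $u_\infty>0$ on $\overline H\setminus\{0\}$). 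In either case we reach a contradiction, so $l_\Omega\cdot l_{\Omega'}>0$.

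The step I expect to be most delicate is the reduction to the model case: one must confirm that Lemma~\ref{lem: blow-up-is-half-space} applies on the target side, where the rescalings are driven by $h_i^{-1}A_{h_i}^{-1}$ rather than $A_{h_i}$, and one must control the limiting normals even when the matrices $M_{h_i}$ degenerate as $h_i\to 0$; it is precisely the identity $\langle A^{-T}a,Ab\rangle=\langle a,b\rangle$ that makes this robust, since it bypasses any need to understand $\lim M_{h_i}$. By contrast, the model-case argument, even in the borderline direction $\langle\nu_H,\nu_{H'}\rangle=0$, uses only the $C^{1,\alpha}_{loc}$ regularity of the convex extensions and Legendre duality, so it does not circularly presuppose the boundary $C^2$ regularity that is the ultimate goal.
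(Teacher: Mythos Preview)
Your proof is correct and follows the same blow-up strategy as the paper: reduce to a homogeneous optimal transport map between half-spaces via Lemma~\ref{lem: blow-up-is-half-space}, and transport the sign of $\langle\nu,\nu'\rangle$ to the blow-up using the affine-invariance identity $\langle A^{-T}\nu, A\nu'\rangle=\langle\nu,\nu'\rangle$. The only difference is that the paper asserts obliqueness in the half-space model case as known, while you supply a direct argument for it via the two cases $\langle\nu_H,\nu_{H'}\rangle<0$ and $\langle\nu_H,\nu_{H'}\rangle=0$; both treatments are valid.
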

    \begin{proof}
        Suppose this is not true, then $l_{\Omega}\cdot l_{\Omega'} = 0$. Then we consider a blow up of \\$((\Omega, g(x)dx), (\Omega', g'(y)dy), u, v)$ at $0$ along some subsequence $h_i\to 0$. From Lemma~\ref{lem: blow-up-is-half-space}, we know that
        \[A_{h_i}\Omega\to H\]
        \[h_iA_{h_i}^{-1}\Omega'\to H'\]
        where $H, H'$ are half-spaces defined by defining functions $l_H$ and $l_{H'}$ so that $H = \{x\in \mathbb R^n: l_H(x)\geq 0\}$ and $H' = \{y\in \mathbb R^n: l_{H'}(y)\geq 0\}$, and 
        \[\lim_{h_i\to 0}\frac{l_{\Omega}\circ{A_{h_i}^{-1}}}{|l_{\Omega}\circ{A_{h_i}^{-1}}|}= \frac{l_H}{|l_H|}\]
        and
        \[\lim_{h_i\to 0}\frac{l_{\Omega'}\circ A_{h_i}}{|l_{\Omega'}\circ A_{h_i}|}= \frac{l_{H'}}{|l_{H'}|}.\]
        Moreover, since the blow-up is a homogeneous optimal transport map from $(H, cdx)$ to $(H', c'dy)$, we know that
        \[l_H\cdot l_{H'}>0.\]
        However, we also have 
        \[\frac{l_H\cdot l_{H'}}{|l_H||l_{H'}|} = \lim_{h_i\to 0}\frac{l_{\Omega}\circ{A_{h_i}^{-1}}\cdot l_{\Omega'}\circ A_{h_i}}{|l_{\Omega}\circ{A_{h_i}^{-1}}||l_{\Omega'}\circ A_{h_i}|} =0\]
        which is a contradiction. 
    \end{proof}

    Once the obliqueness is established, the arguments of Chen-Liu-Wang \cite[Section 6]{Chen-Liu-Wang} can be applied to give $C^{2, \alpha}$-estimates. 
    \begin{theorem}\label{thm: global-C-2-a-regularity}
        Suppose $((\Omega, g(x)dx), (\Omega', g'(y)dy), u, v)$ is an optimal transport map where the densities $(g(x), g'(y))$ satisfy
	    \[C^{-1}\leq g(x), g'(y)\leq C\]
    	   and
	    \[[g]_{\alpha, \overline{\Omega}}+[g']_{\alpha, \overline{\Omega'}}\leq C\]
            for some $\alpha\in (0, 1)$ and $C>1$.
            Suppose in addition $\Omega, \Omega'$ have $C^{1, \beta}$-boundary for some $\beta\in(0, 1)$. Then $u\in C^{2, \gamma}(\overline\Omega)$ and $v\in C^{2, \gamma}(\overline{\Omega'})$ for $\gamma = \min(\alpha, \beta)$ and
        \[\|\nabla u\|_{C^{1, \gamma}(\overline{\Omega})}+\|\nabla v\|_{C^{1, \gamma}(\overline{\Omega'})}\leq M\]
        for $M$ that depends only on $n, \alpha, \beta, C$, and the domains $\Omega, \Omega'$. 
    \end{theorem}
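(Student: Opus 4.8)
The plan is to reduce the global estimate to a local one near an arbitrary boundary point and then run the perturbation scheme of Chen--Liu--Wang \cite[Section~6]{Chen-Liu-Wang}, tracking how the $C^{1,\beta}$ (rather than $C^{1,1}$) regularity of $\partial\Omega,\partial\Omega'$ enters. Interior points require nothing new: since $g,g'\in C^{\alpha}$, Caffarelli's interior theory \cite{Caffarelli,Caffarelli3} gives $u\in C^{2,\alpha}_{loc}(\Omega)$, $v\in C^{2,\alpha}_{loc}(\Omega')$ with estimates depending only on the ellipticity, the H\"older norms of the densities, and the distance to the boundary. So fix $x_0\in\partial\Omega$; after a shift, a vertical shift, subtracting the affine function $u(x_0)+\nabla u(x_0)\cdot(x-x_0)$ (legitimate since $u\in C^{1,1-\eps}(\overline\Omega)$ by Theorem~\ref{thm: global-C-1-1-eps-regularity}), and a rotation, we may assume $x_0=0$, $u(0)=|\nabla u|(0)=0$, $\Omega\subset\{x_n\ge 0\}$, and
\[
\overline\Omega\cap B_1=\{(x',x_n):x_n\ge\phi(x')\}\cap B_1,\qquad 0\le\phi(x')\le C|x'|^{1+\beta},
\]
with $\phi$ convex; likewise $\Omega'$ at $y_0=\nabla u(0)\in\partial\Omega'$, with graph function bounded by $C|y'|^{1+\beta}$. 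From Theorem~\ref{thm: global-C-1-1-eps-regularity} we also record the bound $\|A_h\|+\|A_h^{-1}\|\le Ch^{-1/2-\eps}$ for the normalizing matrices of the centered sections $S^c_h(u,x_0)$.

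The two structural ingredients we feed into the scheme are a \emph{uniform obliqueness} estimate and the \emph{half-space structure of boundary blow-ups}. Lemma~\ref{lem: obliqueness} gives, at each boundary point, $l_\Omega\cdot l_{\Omega'}>0$; to upgrade this to a bound $l_\Omega\cdot l_{\Omega'}\ge\delta_0>0$ uniform over $x_0\in\partial\Omega$ we argue by compactness exactly as in the proofs of Propositions~\ref{prop: effective-almost-C-1-1-eps} and~\ref{prop: effective-C-1-1-eps}: a sequence of base points with obliqueness tending to $0$ produces, after translating the base point to the origin, a sequence of optimal transport maps whose boundary graph functions have a uniform $C^{1,\beta}$ modulus (inherited from the fixed domains $\Omega,\Omega'$), hence subconverge in $C^{1}_{loc}$, and whose potentials subconverge in $C^{1,\gamma}_{loc}$ by Lemma~\ref{lem: uniform density} and Proposition~\ref{prop: C1-delta-estimate}, to a limiting optimal transport map between $C^{1,\beta}$ domains with obliqueness $0$ --- contradicting Lemma~\ref{lem: obliqueness}. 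With uniform obliqueness in hand, Lemma~\ref{lem: blow-up-is-half-space} together with Theorem~\ref{thm: homog-of-blowup-lebesgue-measure} shows that every blow-up at a boundary point is a homogeneous degree-$2$ optimal transport map between two half-spaces; a boundary Liouville/Pogorelov argument (as in \cite{Chen-Liu-Wang}) then identifies this blow-up with an affine map, and the standard iteration on dyadic sections yields, uniformly over boundary points, a pointwise $C^{1,1}$ estimate: $S^c_h(u,x_0)$ is comparable with universal constants to a half-ellipsoid $x_0+\sqrt h\,E\cap\{x_n\ge 0\}$ with $E$ of bounded eccentricity.

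Given these facts the argument of \cite[Section~6]{Chen-Liu-Wang} applies. One flattens $\partial\Omega$ near $0$ by the $C^{1,\beta}$ diffeomorphism associated to the graph $\phi$; in the new variables $\partial\Omega$ is $C^{1,\beta}$ and equation~\eqref{eq: OT-MA-equation} acquires coefficients and right-hand side that are $C^{\min(\alpha,\beta)}$ (the density contributes $C^{\alpha}$, the flattening $C^{\beta}$). Rescaling the section $S^c_h(u,0)$ to unit size using the uniform pointwise $C^{1,1}$ estimate and applying a partial Legendre transform in a direction oblique to both boundaries (legitimate by the uniform obliqueness), the second boundary condition $\nabla u(\Omega)=\Omega'$ becomes a uniformly oblique, $C^{1,\min(\alpha,\beta)}$-regular Neumann-type condition on a $C^{1,\min(\alpha,\beta)}$ portion of the boundary. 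The nonlinear iteration then compares $u$ on successive dyadic sections with its quadratic blow-up, the error at scale $h$ being controlled by the H\"older moduli of the data and the flatness defect $\phi(x')\lesssim|x'|^{1+\beta}$, i.e.\ of order $h^{\min(\alpha,\beta)/2}$ relative to the section size; Schauder estimates for oblique derivative problems close the iteration and give $D^2u\in C^{\min(\alpha,\beta)}$ up to $\partial\Omega$ together with the quantitative bound $\|\nabla u\|_{C^{1,\gamma}(\overline\Omega)}\le M$, $\gamma=\min(\alpha,\beta)$. Running the identical argument for $v$ (whose data and target domain satisfy the same hypotheses) completes the proof.

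The main obstacle I anticipate is bookkeeping the passage from $C^{1,1}$ to $C^{1,\beta}$ boundary inside the Chen--Liu--Wang iteration: one must verify that the boundary flatness defect $O(|x'|^{1+\beta})$, which is the sole source of the $\beta$-loss, produces exactly an exponent-$\beta$ (and no worse) deterioration at each step of the Schauder scheme, and --- crucially --- that the obliqueness constant and the eccentricity of the rescaled sections remain bounded uniformly along the dyadic sequence of scales, which is precisely where uniform obliqueness and the half-space structure of blow-ups from Lemma~\ref{lem: blow-up-is-half-space} are indispensable. A secondary technical point is the boundary Liouville step identifying a homogeneous degree-$2$ optimal transport map between half-spaces with an affine map, which underlies the uniform pointwise $C^{1,1}$ estimate at boundary points.
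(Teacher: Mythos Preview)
Your high-level strategy matches the paper's: establish obliqueness (Lemma~\ref{lem: obliqueness}) and then import the machinery of \cite[Section~6]{Chen-Liu-Wang}. Where you diverge is in the description of that machinery and in the handling of the $C^{1,\beta}$ defect.

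Your account of the Chen--Liu--Wang argument---flattening by a $C^{1,\beta}$ diffeomorphism, partial Legendre transform, reduction to an oblique Neumann problem, Schauder estimates---is closer to the Delano\"e/Urbas approach than to what \cite[Section~6]{Chen-Liu-Wang} actually does. Their scheme is a barrier-based iteration: on each centered section one compares $u$ to a model quadratic solution via explicit upper and lower barriers (their Lemma~6.2), showing the sections become geometrically close to half-ellipsoids at a definite rate, and then iterates. There is no partial Legendre transform and no linear Schauder step. This matters because the place where $C^{1,\beta}$ (rather than $C^{1,1}$) regularity enters is precisely in the construction of those barriers, not in a change-of-variables Jacobian.

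The paper's proof pinpoints this. With only $C^{1,\beta}$ boundary, the quantity $a_h$ controlling the vertical offset of the boundary inside $S^c_h$ satisfies $a_h \le h^{(1+\beta)/2-\eps}$ rather than $h^{1-\eps}$, and correspondingly $0\le D_nu\le C_1h^{(1+\beta)/2-\eps}$ on the section. One must then replace the lower barrier in \cite[Lemma~6.2]{Chen-Liu-Wang} by
\[
\check w = (1+h^{\delta})^{1/n}w-(1+h^{\delta})^{1/n}h+h+C_1\bigl(x_n-Ch^{1/2-\eps}\bigr)h^{(1+\beta)/2-\eps},
\]
and the argument closes provided $\delta<\beta/2$; this is exactly the bookkeeping you flagged as the anticipated obstacle, and it is the entire content of the paper's proof beyond invoking obliqueness. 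The additional steps you outline (uniform pointwise $C^{1,1}$ via a boundary Liouville theorem, compactness for uniform obliqueness) are either already implicit in the Chen--Liu--Wang iteration or unnecessary once the barrier modification is in place.
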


    \begin{proof}
        Once we have Lemma~\ref{lem: obliqueness}, we can follow the proof from \cite[Section 6]{Chen-Liu-Wang} with minor modifications. We'll explain the main modification: since $\partial\Omega, \partial\Omega'$ are only $C^{1, \beta}$, Lemma 6.2 of \cite{Chen-Liu-Wang} only holds under the additional assumption $\delta<\frac{\beta}{2}$. Looking through the proof of \cite[Lemma 6.2]{Chen-Liu-Wang}, we see that if we only have $C^{1, \beta}$-boundary, then we only have $a_h\leq h^{\frac{1+\beta}{2}-\eps}$ (instead of $a_h\leq h^{1-\eps}$) and
        \[0\leq D_nu\leq C_1h^{\frac{1+\beta}{2}-\eps}.\]
        Therefore, for the lower barrier, we must use
        \[\check w := (1+h^{\delta})^{\frac 1 n}w-(1+h^{\delta})^{\frac 1 n}h+h+C_1(x_n-Ch^{\frac 1 2 -\eps})h^{\frac{1+\beta}{2}-\eps},\] 
        and the rest of the proof of \cite[Lemma 6.2]{Chen-Liu-Wang} holds assuming $\delta<\frac{\beta}{2}$. 
        The rest of the arguments follow exactly as in \cite[Section 6]{Chen-Liu-Wang}. 
    \end{proof}
        
\section{Roundness of sections}\label{sec: roundness}

In this section we are interested in the studying the shapes of sections, which can be regarded as a weak form of $C^{1, 1}$ regularity.  We make the following definition.

        \begin{defn}\label{def: round}
            Let $((\Omega, g(x)dx), (\Omega', g'(y)dy), u, v)$ be an optimal transport map with homogeneous densities $(g(x), g'(y))$ of degree $(l, k)$. Suppose $0\in\partial\Omega\cap\partial\Omega'$ and $0 = u(0)=|\nabla u|(0)$. We say that the sections of $(u, v)$ at $0$ are {\bf round} if there is a constant $C>1$ and $h_0>0$ such that for all $0<h<h_0$, we have
            \[B_{C^{-1}h^{\frac{1}{1+\frac{n+l}{n+k}}}}(0)\subset S^c_h(u, 0)\subset  B_{Ch^{\frac{1}{1+\frac{n+l}{n+k}}}}(0)\]
            and
            \[B_{C^{-1}h^{\frac{1}{1+\frac{n+k}{n+l}}}}(0)\subset S^c_h(v, 0)\subset  B_{Ch^{\frac{1}{1+\frac{n+k}{n+l}}}}(0).\]
        \end{defn}
        If the sections of $u$ at $0$ are round, and $((\mathtt C, d\mu), (\mathtt C', d\nu), u_{\infty}, v_{\infty})$ is a blow-up of $u$ about $0$, then it is clear that $(\mathtt C, \mathtt C')$ is affine equivalent to the tangent cone of $(\Omega, \Omega')$ about $0$. Thus, the roundness of sections is related to the existence of homogeneous optimal transport maps between the tangent cones of $(\Omega, \Omega')$. Using this idea, we first prove a negative result saying that roundness does not hold if homogeneous optimal transport maps between tangent cones do not exist. 
        \begin{theorem}\label{thm: non-round}
            Let $((\Omega, g(x)dx), (\Omega', g'(y)dy), u, v)$ be an optimal tranport map with homogeneous densities $(g(x), g'(y))$ of degree $(l, k)$, such that $0\in \partial\Omega\cap\partial\Omega'$ and $u(0) = |\nabla u|(0) = 0$. Let $(\mathtt C, \mathtt C')$ be the tangent cones of $(\Omega, \Omega')$ at $0$. If there is no homogenous optimal transport map from $(\mathtt C, g(x)dx)$ to $(\mathtt C', g'(y)dy)$, then the centered sections of $(u, v)$ are not round at $0$. 
        \end{theorem}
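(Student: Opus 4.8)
The plan is to argue by contraposition: assuming that the centered sections of $(u,v)$ at $0$ are round, I will produce a homogeneous optimal transport map from $(\mathtt C, g(x)dx)$ to $(\mathtt C', g'(y)dy)$, contradicting the hypothesis. The first observation is that roundness, as formulated in Definition~\ref{def: round}, contains precisely the hypothesis on the sections $S_h^c(u,0)$ that is required to run Theorem~\ref{thm: homog-of-blowup-density}: namely the two-sided inclusion $B_{C^{-1}h^{1/(1+\frac{n+l}{n+k})}}(0)\subset S_h^c(u,0)\subset B_{Ch^{1/(1+\frac{n+l}{n+k})}}(0)$ for $0<h<h_0$. In particular the normalizing matrices of the sections can be taken to be scalar multiples of the identity, so no affine degeneration occurs along a blow-up.

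Next, fix any sequence of scales $h_i\to 0$. By Theorem~\ref{thm: homog-of-blowup-density}, after passing to a subsequence the rescaled optimal transport maps converge to a blow-up of the form $((\mathtt C, g(x)dx), (\mathtt C', g'(y)dy), u_\infty, v_\infty)$, where $(\mathtt C, \mathtt C')$ are exactly the tangent cones of $(\Omega, \Omega')$ at $0$ and $(u_\infty, v_\infty)$ are homogeneous of degrees $1+\frac{n+l}{n+k}$ and $1+\frac{n+k}{n+l}$ respectively. Since a blow-up is by definition an optimal transport map, this limiting object is a homogeneous optimal transport map from $(\mathtt C, g(x)dx)$ to $(\mathtt C', g'(y)dy)$. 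This directly contradicts the assumption that no such homogeneous map exists, and therefore the sections cannot be round at $0$.

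The only point that requires care is that the blow-up really lives on the tangent cones of $\Omega$ and $\Omega'$, and that its densities are genuinely $g$ and $g'$ rather than affinely distorted versions — this is the step where roundness is used decisively, and it is precisely what is recorded in the statement of Theorem~\ref{thm: homog-of-blowup-density}. Beyond invoking that theorem, the remaining check is merely bookkeeping: that the homogeneity exponent $1+\frac{n+l}{n+k}$ produced by the blow-up matches the exponent built into Definition~\ref{def: round}, which it does by construction. Consequently the proof is short, essentially a packaging of Theorem~\ref{thm: homog-of-blowup-density}, and I do not anticipate any genuine obstacle.
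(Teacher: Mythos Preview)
Your proposal is correct and follows essentially the same approach as the paper: assume roundness, observe that this gives exactly the section-shape hypothesis of Theorem~\ref{thm: homog-of-blowup-density}, apply that theorem to obtain a homogeneous optimal transport map on the tangent cones $(\mathtt C,\mathtt C')$ with the original densities, and derive a contradiction. The paper's proof is just a terser version of what you wrote.
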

        \begin{proof}
            Suppose that the centered sections are round, then $A_h= h^{-\frac{1}{1+\frac{n+l}{n+k}}}Id$ is a family of normalization matrices for $S_h^c(u, 0)$, and by Theorem~\ref{thm: homog-of-blowup-density} the blow-up at $0$ exists and must be homogeneous optimal transport map from $(\mathtt C, g(x)dx)$ to $(\mathtt C', g'(y)dy)$. But this contradicts our hypothesis that no such homogeneous map exist. 
        \end{proof}

        A very natural question is then the following. 
        \begin{ques}\label{ques: roundness}
            Suppose there exists a homogenous optimal transport map between $\mathtt C$ and $\mathtt C'$. Are the sections of $u$ necessarily round at $0$?
        \end{ques}
        We will address this question in subsequent sections. 
        \subsection{Strongly oblique case} Let $(\mathtt C, \mathtt C')$ be the tangent cone of $(\Omega, \Omega')$ at $0$ respectively, in this section, we prove a theorem that guarantees the roundness of sections under a {\em strong obliqueness} condition on the geometry of $(\mathtt C, \mathtt C')$. 
        
        \begin{theorem}\label{thm: strong-oblique-implies-round}
            Let $((\Omega, g(x)dx), (\Omega', g'(y)dy), u, v)$ be an optimal tranport map with homogeneous densities $(g(x), g'(y))$ of degree $(l, k)$, such that $0\in \partial\Omega\cap\partial\Omega'$ and $u(0) = |\nabla u|(0) = 0$. Let $(\mathtt C, \mathtt C')$ be the tangent cones of $(\Omega, \Omega')$ at $0$. If $\overline{\mathtt C'}\setminus \{0\}\subset int(\mathtt C^{\circ})$, then the centered sections of $(u, v)$ are round at $0$. In particular, all blow-ups are affine equivalent to a homogenous optimal transport map between $\mathtt C$ and $\mathtt C'$. 
        \end{theorem}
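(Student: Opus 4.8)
The plan is to establish roundness of the centered sections by contradiction, via the blow-up machinery of Section~\ref{Sec: homogeneity-of-blow-ups}. First I would note that the condition $\overline{\mathtt C'}\setminus\{0\}\subset \operatorname{int}(\mathtt C^{\circ})$ is symmetric in $(\mathtt C,\mathtt C')$: it is equivalent to $\langle x,y\rangle>0$ for all $x\in \mathtt C\setminus\{0\}$ and $y\in\overline{\mathtt C'}\setminus\{0\}$, hence also to $\mathtt C\setminus\{0\}\subset \operatorname{int}((\mathtt C')^{\circ})$. So it suffices to prove the sections $S^c_h(u,0)$ are round, the bound for $S^c_h(v,0)$ following by applying the argument to the inverse transport map $v$, whose tangent cones are $(\mathtt C',\mathtt C)$ and which satisfies $v(0)=|\nabla v|(0)=0$ automatically. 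Suppose then that $S^c_h(u,0)$ is not round, and let $A_h$ be symmetric positive definite normalization matrices with $B_{R^{-1}}\subset A_hS^c_h(u,0)\subset B_R$. Failure of roundness means there is a sequence $h_i\to 0$ with $\|A_{h_i}\|\,\|A_{h_i}^{-1}\|\to\infty$: if this quantity stayed bounded, then $A_h$ would be comparable to a multiple of the identity, a blow-up would exist by compactness, the monotonicity formula (Theorem~\ref{thm: monotonicity}) would force it to be homogeneous of degree $1+\tfrac{n+l}{n+k}$, and comparing sections at nearby heights through this homogeneity would pin $A_h\sim h^{-\theta}\operatorname{Id}$ with $\theta=(1+\tfrac{n+l}{n+k})^{-1}$, i.e.\ roundness. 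Since the $S^c_h(u,0)$ shrink to $\{0\}$ by strict convexity, all eigenvalues of $A_{h_i}$ tend to $\infty$.

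Next I would extract a blow-up: by the $C^{1,\delta}$-estimate (Proposition~\ref{prop: C1-delta-estimate}) and Blaschke selection, along a subsequence the rescaled maps converge to an optimal transport map $((\mathtt C_\infty,g_\infty\,dx),(\mathtt C'_\infty,g'_\infty\,dy),u_\infty,v_\infty)$ with $\mathtt C_\infty=\lim_iA_{h_i}\Omega$, $\mathtt C'_\infty=\lim_ih_i^{-1}A_{h_i}^{-1}\Omega'$, and with $A_{h_i}S^c_{h_i}(u,0)$ converging to a convex body $S_\infty$ satisfying $B_{R^{-1}}\subset S_\infty\subset B_R$. Since $\operatorname{vol}(S_\infty)>0$, the limiting source measure $\mu_\infty$ has positive mass, so $\mathtt C_\infty=\operatorname{supp}\mu_\infty$ is not contained in any proper affine subspace; likewise, using $v_\infty$, neither is $\mathtt C'_\infty$. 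The theorem therefore reduces to the following purely geometric statement: \emph{if $\mathtt C,\mathtt C'$ are strict convex cones with $\langle x,y\rangle>0$ for all $x\in\mathtt C\setminus\{0\}$, $y\in\overline{\mathtt C'}\setminus\{0\}$, and $A_i$ are symmetric positive definite with $\|A_i\|\,\|A_i^{-1}\|\to\infty$, then, after rescaling, $\lim_iA_i\mathtt C$ or $\lim_iA_i^{-1}\mathtt C'$ is contained in a proper subspace} (along any subsequence for which the limits exist). Because $\Omega\subset\mathtt C$ and $\Omega'\subset\mathtt C'$ by definition of the tangent cone, this forces one of $\mathtt C_\infty\subset\lim_iA_{h_i}\mathtt C$ or $\mathtt C'_\infty\subset\lim_ih_i^{-1}A_{h_i}^{-1}\mathtt C'$ to be degenerate---a contradiction.

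The proof of the geometric statement is the crux. I would diagonalize $A_i=O_iD_iO_i^{t}$, pass to a subsequence with $O_i\to O$, and use that strong obliqueness is unchanged under applying the same rotation to $\mathtt C$ and $\mathtt C'$ to reduce to $A_i=D_i=\operatorname{diag}(d^i_1\le\cdots\le d^i_n)$ with $d^i_n/d^i_1\to\infty$. After a further subsequence, group the indices into clusters of comparable eigenvalues with consecutive clusters separated by diverging ratios; let $W$ be the span of the coordinate axes in the smallest cluster and $W'$ that of the largest. Both are nonzero (there are at least two clusters) and $W\perp W'$ (eigenspaces of a symmetric matrix). If $\mathtt C\cap W=\{0\}$, then for any $x$ with $\pi_Wx\neq 0$ the vectors $D_i^{-1}x$ tend to infinity in a direction converging into $W\setminus\{0\}$ (the smallest eigendirections of $D_i$ are magnified most by $D_i^{-1}$), so $D_i^{-1}x\notin\mathtt C$ for large $i$, whence $\lim_iD_i\mathtt C\subset W^{\perp}$, a proper subspace. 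Symmetrically, if $\mathtt C'\cap W'=\{0\}$ then $\lim_iD_i^{-1}\mathtt C'\subset (W')^{\perp}$. Finally both alternatives cannot fail at once: nonzero vectors $r\in\mathtt C\cap W$ and $y\in\mathtt C'\cap W'$ would have $\langle r,y\rangle=0$ since $W\perp W'$, contradicting strong obliqueness. This proves roundness, and once roundness holds the choice $A_h=h^{-\theta}\operatorname{Id}$ is admissible, so Theorem~\ref{thm: homog-of-blowup-density} identifies every blow-up with a homogeneous optimal transport map between $\mathtt C$ and $\mathtt C'$ (affine equivalent to such a map for a general admissible normalization).

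I expect the main difficulty, beyond the geometric lemma above, to lie in the blow-up step: verifying that centered sections exist for $u$ and for the limit $v_\infty$, that they converge together with the potentials in $C^{1,\delta}_{loc}$, and that the possibly degenerating homogeneous densities $g_i(x)=c_ig(A_{h_i}^{-1}x)$ converge weakly to a nontrivial limit---all routine given Section~\ref{Sec: homogeneity-of-blow-ups}, but requiring care. The one genuinely new input is the observation that, because the normalizing maps are symmetric, the ``fast'' and ``slow'' eigenspaces of any degeneration are mutually orthogonal, so strong obliqueness---a strict positivity of the pairing between $\mathtt C$ and $\mathtt C'$---cannot survive it.
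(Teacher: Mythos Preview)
Your approach is genuinely different from the paper's, and the geometric lemma at its core is correct and elegant: the observation that for symmetric $A_i$ the ``slow'' and ``fast'' eigenspaces $W,W'$ are orthogonal, so strong obliqueness forbids $\mathtt C$ from meeting $W$ and $\mathtt C'$ from meeting $W'$ simultaneously, is a nice idea. The paper instead argues directly: strong obliqueness gives $x\cdot y\geq\delta|x||y|$ for $x\in\Omega,\ y\in\Omega'$, and a one-line convexity estimate then shows that the \emph{non-centered} sections satisfy $B_{C^{-1}r(h)}\cap\Omega\subset S_h(u,0)\subset B_{Cr(h)}\cap\Omega$; the monotonicity formula and duality of sections pin down $r(h)\sim h^{\theta}$, and finally uniform density is \emph{derived} to upgrade to centered sections. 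No blow-up is taken.

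Your argument, however, has a genuine gap in the blow-up step. You invoke Section~\ref{Sec: homogeneity-of-blow-ups} to extract a limit optimal transport map with full-dimensional domain and target, but Theorem~\ref{thm: homog-of-blowup-density} (the homogeneous-density case) already \emph{assumes} roundness of the sections, precisely what you are trying to prove; and Theorem~\ref{thm: homog-of-blowup-lebesgue-measure} is for nondegenerate H\"older densities, where uniform density (Lemma~\ref{lem: uniform density}) is available. With homogeneous densities and non-scalar $A_{h_i}$, neither result applies. Concretely, the inference ``$\operatorname{vol}(S_\infty)>0$, hence $\mu_\infty$ has positive mass, hence $\mathtt C_\infty$ is full-dimensional'' conflates the centered section with the domain: $S^c_{h_i}(u,0)$ typically extends outside $\Omega$, so $A_{h_i}S^c_{h_i}\supset B_{R^{-1}}$ says nothing about $A_{h_i}\Omega$ containing a ball. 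Without a uniform density bound $|S^c_h\cap\Omega|\geq c|S^c_h|$---which the paper obtains only \emph{after} establishing relative roundness of the non-centered sections---you cannot rule out that $A_{h_i}\Omega$ collapses to a hyperplane. Similarly, the rescaled densities $c_ig(A_{h_i}^{-1}x)$ need not converge to anything doubling, so even the existence of a limiting optimal transport map in the sense of the paper's definition is in question. The same circularity affects your treatment of the bounded-eccentricity case: you appeal to a homogeneous blow-up to identify the scale $\lambda_h\sim h^{-\theta}$, but the relevant blow-up theorem presupposes that identification.
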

        \begin{proof}
            First we show the (non-centered) sections $S_h(u, 0)$ and $S_h(v, 0)$ are round. More precisely, we claim there exist $C>1$ depending on the cones $(\mathtt C, \mathtt C')$ such that for all $h<1$, there exist $r(h), r'(h)>0$ such that
            \[B_{C^{-1}r(h)}\cap\Omega\subset S_h(u, 0)\subset B_{Cr(h)}\cap\Omega\]
            and 
            \[B_{C^{-1}r'(h)}\cap\Omega'\subset S_h(v, 0)\subset B_{Cr'(h)}\cap\Omega'. \]
            To see this, we first use the geometric condition $\overline{\mathtt C'}\setminus \{0\}\subset \text{int}(\mathtt C^{\circ})$, which implies there exist $\delta>0$ such that for all $x\in \Omega$ and $y\in \Omega'$,
            \[x\cdot y\geq \delta|x||y|.\]
            Now fix $x_0$ so that $u(x_0) = h$, then by the convexity of $u$, we have
            \[u(x)\geq u(x_0)+\nabla u(x_0)\cdot (x-x_0)\geq h+\delta|\nabla u(x_0)||x|-|\nabla u(x_0)||x_0|\]
            which implies that \[S_h(u, 0)\subset \{\delta|\nabla u(x_0)||x|-|\nabla u(x_0)||x_0|\leq 0\}\cap\Omega = B_{\delta^{-1}|x_0|}(0)\cap \Omega.\] 
            From this we see that if $x_0, x_1$ are two points in $\partial S_h(u, 0)\cap\Omega$, then $|x_1|\leq \delta^{-1}|x_0|$, and the claim follows. 

            Next we show that the centered sections $S_h^c(u, 0)$ and $S_h^c(v, 0)$ are round as well. We note that $B_{C^{-1}r(h)}\cap\Omega\subset S_h(u, 0)\subset B_{Cr(h)}\cap\Omega$ implies that $|S_h(u, 0)|\sim r(h)^n$ and $|S_h(v, 0)|\sim r'(h)^n$, and hence for all $h$ sufficiently small, Proposition~\ref{prop: properties-of-sections} gives
            \[r(h)^nr'(h)^n\sim |S_h(u, 0)||S_h(v, 0)|\leq |S_{Ch}^c(u, 0)||\nabla u(S_{Ch}^c(u, 0))|\sim h^n\]
            which implies
            \[r'(h)r(h)\lesssim h. \]
            Moreover, we also have
            \[ r(h)^{n+l}\lesssim \int_{B_{C^{-1}r(h)}}g(x)dx\leq \int_{S_h(u, 0)}g(x)dx\leq \int_{B_{Cr(h)}}g(x)dx\lesssim r(h)^{n+l}\]
            and 
            \[r'(h)^{n+k}\lesssim \int_{B_{C^{-1}r'(h)}}g'(y)dy\leq \int_{S_h(v, 0)}g'(y)dy\leq \int_{B_{Cr'(h)}}g'(y)dy\lesssim r'(h)^{n+k}, \]
            hence by the monotonicity formula (Theorem~\ref{thm: monotonicity}), we have
            \[r(h)^{n+l}\sim \int_{S_h(u, 0)}g(x)dx\geq \int_{D_{h^{\frac 1 2}}(u, 0)}g(x)dx\gtrsim h^{\frac{n+l}{1+\frac{n+l}{n+k}}}\]
            and
            \[r'(h)^{n+k}\sim \int_{S_h(v, 0)}g'(y)dy\geq \int_{D_{h^{\frac 1 2}}(v, 0)}g'(y)dy\gtrsim h^{\frac{n+k}{1+\frac{n+k}{n+l}}}, \]
            which implies
            \[r(h)r'(h)\gtrsim h. \]
            Hence we have $r(h)\sim h^{\frac{1}{1+\frac{n+l}{n+k}}}$ and $r'(h)\sim  h^{\frac{1}{1+\frac{n+k}{n+l}}}$. This implies $|S_h(u, 0)||S_h(v, 0)|\sim h^n$, which implies by Proposition~\ref{prop: properties-of-sections} that there is a uniform density bound
            \[\frac{|S_h^c(u, 0)\cap\Omega|}{|S_h^c(u, 0)|}, \frac{|S_h^c(v, 0)\cap\Omega'|}{|S_h^c(v, 0)|}\geq \delta >0. \]
            If we let $l_1(h), \ldots, l_n(h)$ denote the lengths of the principle axis of the John ellipsoid of $S_h^c(u, 0)$, then since $B_{cr(h)}(x')\subset S_{C^{-1}h}(u, 0)\subset S_{h}^c(u, 0)$, we have that
            \[l_i(h)\gtrsim r(h) \text{ for all }i = 1, 2, \ldots, n.\]
            Moreover the uniform density bound implies that 
            \[\prod_{i=1}^nl_i(h)\sim |S_h^c(u, 0)|\sim |S_h(u, 0)|\sim r(h)^n.\]
            Altogether, we see that $l_i(h)\sim r(h)$ for all $i = 1, \ldots, n$, which implies that the centered sections of $u$ are round. The same argument applied to $v$ implies that the centered sections of $v$ are round as well. 
        \end{proof}

        \subsection{2D domains with Lebesgue measure}
        In this section, we focus on the case when $n=2$. For simplicity, we also focus on the situation $g = g' =1$. In particular, we'll give a complete answer to Question~\ref{ques: roundness} when $(\Omega, \Omega')$ are planar polytopes.

        In this case, we have the following observation. 
        \begin{prop}\label{prop: quadratic-poly}
            The only quadratic solutions to $\det D^2u =1$ on any cone $\mathtt C\subset \mathbb R^2$ are given by restrictions of a positive definite quadratic polynomial. 
        \end{prop}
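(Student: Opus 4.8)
\emph{Plan.} Write the quadratic solution in the form $u(x) = \tfrac{1}{2}x^{\top}Ax + b\cdot x + c$ for a symmetric $2\times 2$ matrix $A$, a vector $b\in\mathbb R^2$ and $c\in\mathbb R$, so that $D^2u\equiv A$ is constant and the Monge--Amp\`ere equation $\det D^2u=1$ on $\mathtt C$ reduces to the single scalar condition $\det A = 1$. Since $n=2$, this forces the two eigenvalues $\lambda_1,\lambda_2$ of $A$ to satisfy $\lambda_1\lambda_2 = 1 > 0$; hence they are both positive or both negative, so there is no degenerate eigenvalue and $A$ is either positive definite or negative definite. Equivalently, $u$ is either strictly convex or strictly concave on all of $\mathbb R^2$.

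By our definition of an optimal transport map, the potential $u$ is (strictly) convex on $\mathtt C$. Because $D^2u\equiv A$ is constant and $\mathtt C$ is an open, genuinely $2$-dimensional set (a tangent cone of a planar convex body with nonempty interior), convexity of $u$ on $\mathtt C$ is equivalent to $A$ being positive semidefinite; combined with $\det A=1$ this gives $A$ positive definite. (Alternatively: a quadratic polynomial is strictly convex precisely when its leading quadratic form is positive definite, so strict convexity on $\mathtt C$ already rules out the concave branch directly.) Therefore $u$ is the restriction to $\mathtt C$ of the positive-definite quadratic polynomial $\tfrac{1}{2}x^{\top}Ax + b\cdot x + c$, which is exactly the claim.

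For completeness one checks the converse direction, which confirms that the description is sharp and holds on an \emph{arbitrary} cone: if $A$ is positive definite with $\det A = 1$, then $\nabla u(x) = Ax + b$ is an affine isomorphism preserving Lebesgue measure (as $\det A = 1$), so it pushes $dx|_{\mathtt C}$ forward to $dy|_{A\mathtt C + b}$, and $\bar u = u$ is its own minimal convex extension; hence $u$ genuinely solves the optimal transport problem on $\mathtt C$. There is essentially no obstacle in this proposition; the only point deserving a moment's attention is the middle step — that convexity of $u$ on a cone $\mathtt C$ which may be a proper sub-cone of $\mathbb R^2$ (rather than all of $\mathbb R^2$ or a half-plane) still forces the constant Hessian $A = D^2u$ to be positive semidefinite — and this is immediate once one uses that $\mathtt C$ is open and $2$-dimensional.
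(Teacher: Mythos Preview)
Your argument assumes from the outset that $u$ is a quadratic \emph{polynomial}, i.e.\ that it has the form $u(x)=\tfrac12 x^\top A x + b\cdot x + c$ with constant Hessian $A$. But that is the conclusion of the proposition, not its hypothesis. In this paper ``quadratic'' means \emph{homogeneous of degree $2$}: the proposition is invoked in Corollary~\ref{corr: 2D-tangent-cones} for the potentials $(u,v)$ of a homogeneous optimal transport map between cones with $l=k=0$, which by Theorem~\ref{thm: homog-of-blowup-lebesgue-measure} are homogeneous of degree $1+\tfrac{n+l}{n+k}=2$, but are not a priori polynomials. A general degree-$2$ homogeneous function on a planar cone has the form $u(x_1,x_2)=x_2^{2}\,\varphi(x_1/x_2)$ for an arbitrary function $\varphi$ of one variable, and its Hessian is in general nonconstant; so reducing to a constant matrix $A$ with $\det A=1$ is exactly the content that is missing.

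The paper's proof does precisely this reduction: writing $f(t)=\sqrt{u(t,1)}$ so that $u(x,y)=y^2 f(x/y)^2$, the equation $\det D^2u=1$ becomes the ODE $f''=c/f^{3}$, which one integrates (multiply by $f'$, then separate) to get $f(t)=\sqrt{A(t-t_0)^2+C}$ and hence $u(x,y)=A(x-t_0y)^2+Cy^2$. Your write-up would be complete if you inserted this ODE step (or an equivalent argument showing that a $2$-homogeneous solution of $\det D^2u=1$ on an open planar cone has constant Hessian) before the linear-algebra discussion; the remainder of what you wrote --- that $\det A=1$ together with convexity forces $A$ positive definite --- is then fine.
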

        \begin{proof}
            Suppose that $u:\mathtt C\to \mathbb R$ is such a solution, then without loss of generality, we can assume that the cone $\mathtt C$ lies in the upper half space $\{x_2\geq 0\}$. If we set $f(t) := \sqrt{u(t, 1)}$, then $u(x, y) = y^2f(\frac{x}{y})^2$, and this means that $f(t)$ must satisfy the ODE
            \[f''(t) = \frac{c}{f(t)^3},\]
            which can be integrated. First multiply both side by $f'(t)$, which gives
            \[((f'(t))^2)' +c (f(t)^{-2})' = 0. \]
            Integrating this, we have
            \[f'(t)^2+c(f(t))^{-2} = a \implies f'(t) = \sqrt{a-c(f(t))^{-2}}, \]
            for some constant $c$ and $a$. Integrating this again gives
            \[f(t) = \sqrt{A(t-t_0)^2+C}\]
            for some constants $A, t_0, C$, which means 
            \[u(x, y) = A(x-t_0y)^2+Cy^2, \]
            which is a positive definite quadratic polynomial. 
        \end{proof}
        A corollary of this is that we can classify precisely the cones $(\mathtt C, \mathtt C')\subset (\mathbb R^2, \mathbb R^2)$ for which a homogenous optimal transport map of the form $((\mathtt C, dx), (\mathtt C', dy), u, v)$ exist. 
        \begin{corr}\label{corr: 2D-tangent-cones}
            Let $(\mathtt C, \mathtt C')\subset (\mathbb R^2_x, \mathbb R^2_y)$ be two convex cones, then a homogenous optimal transport map of the form $((\mathtt C, dx), (\mathtt C', dy), u, v)$ exists iff we are in one of the following four cases:
            \begin{enumerate}
                \item (Half-space case) Both $\mathtt C$ and $\mathtt C'$ are half-spaces and $l\cdot l'
                 >0$, where $l$ and $l'$ are the linear functions defining the half-space $\mathtt C$ and $\mathtt C'$ respectively. In this case, there is a one-parameter family of optimal maps between $\mathtt C$ and $\mathtt C'$. 
                \item (Acute/strongly oblique case) Both $\mathtt C$ and $\mathtt C'$ are strict cones and $\overline{\mathtt C'}\setminus \{0\}\subset \text{int}(\mathtt C^{\circ})$. In this case, there is a unique homogeneous optimal transport map that maps $\mathtt C$ to $\mathtt C'$. 
                \item (Right angle case) Both $\mathtt C$ and $\mathtt C'$ are strict cones and $\mathtt C' = \mathtt C^{\circ}$, in which case there is a 1-parameter family of homogenous optimal transport maps from $\mathtt C$ to $\mathtt C'$. 
                \item (Obtuse case) Both $\mathtt C$ and $\mathtt C'$ are strict cones and $\overline{\mathtt C^{\circ}}\setminus\{0\}\subset \text{int}(\mathtt C')$. In this case, there is a unique homogeneous optimal transport map that maps $\mathtt C$ to $\mathtt C'$. 
            \end{enumerate}
        \end{corr}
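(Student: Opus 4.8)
The plan is to use Proposition~\ref{prop: quadratic-poly} to reduce the existence of a homogeneous optimal transport map to a statement of linear algebra, and then run a case analysis in planar convex geometry. Since for $g=g'=1$ the only degree compatible with $\det D^2u=1$ is $2$ (and this is exactly the homogeneity produced by the monotonicity formula, cf.\ Theorem~\ref{thm: homog-of-blowup-lebesgue-measure}), any homogeneous $u$ here is homogeneous of degree $2$, which forces $u(0)=|\nabla u|(0)=0$, hence $u\geq 0$ by convexity; by Proposition~\ref{prop: quadratic-poly} (applied to the homogeneous function $u$) we get $u(x)=\frac{1}{2}\langle Qx,x\rangle$ for a symmetric positive-definite matrix $Q$, and $\det Q=\det D^2u=1$, while $\mathtt C'=\nabla u(\mathtt C)=Q\mathtt C$. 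Conversely, for any symmetric positive-definite $Q$ with $\det Q=1$ and $Q\mathtt C=\mathtt C'$, the pair $u(x)=\frac{1}{2}\langle Qx,x\rangle$, $v(y)=\frac{1}{2}\langle Q^{-1}y,y\rangle$ is readily checked to define a homogeneous optimal transport map (Lebesgue measure on a convex cone is doubling, the Monge--Amp\`ere equations hold with constant right-hand side, and the minimal convex extensions are flat outside $\overline{\mathtt C}$, $\overline{\mathtt C'}$). So the corollary is equivalent to classifying the pairs $(\mathtt C,\mathtt C')$ of (proper) convex cones in $\mathbb R^2$ admitting a symmetric positive-definite $Q$ with $\det Q=1$ and $Q\mathtt C=\mathtt C'$, together with the dimension of the set of such $Q$.

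Since $Q$ is an invertible linear map, it preserves being a half-space and preserves containing no line, so $\mathtt C$ is a half-space iff $\mathtt C'$ is, which splits the problem in two. If $\mathtt C=\{l\geq 0\}$ and $\mathtt C'=\{l'\geq 0\}$ are half-spaces, then (using that $Q$ is symmetric) $Q\mathtt C=\{\langle Q^{-1}l,\cdot\rangle\geq 0\}$, so $Q\mathtt C=\mathtt C'$ holds iff $l'$ is a positive multiple of $Q^{-1}l$, which forces $l\cdot l'=\langle l',Q l'\rangle\cdot(\text{positive})>0$; conversely, for any $l,l'$ with $l\cdot l'>0$ one solves for a symmetric positive-definite $Q$ with $Ql'$ a positive multiple of $l$ (the parallelism condition and $\det Q=1$ each cut the $3$-dimensional space of symmetric matrices by one, and positivity is an open condition that is satisfied), giving a one-parameter family of maps. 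This is exactly case (1).

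The heart of the matter is when $\mathtt C,\mathtt C'$ are strict cones, i.e.\ planar wedges of opening angle in $(0,\pi)$. After a simultaneous rotation I may assume $\mathtt C$ has bisector $e_1$ and half-angle $\alpha\in(0,\pi/2)$; then $\mathtt C^\circ$ has the \emph{same} bisector and half-angle $\pi/2-\alpha$, so each boundary ray of $\mathtt C^\circ$ is orthogonal to one boundary ray of $\mathtt C$. Since $Q$ is invertible and orientation-preserving, it must carry the two boundary rays $a_1,a_2$ of $\mathtt C$ onto the two boundary rays $b_1,b_2$ of $\mathtt C'$ respecting cyclic order, so $Qa_i=s_ib_i$ with $s_i>0$; symmetry of $Q$ then amounts to the single relation $s_1\langle b_1,a_2\rangle=s_2\langle b_2,a_1\rangle$, the constraint $\det Q=1$ fixes the remaining scale, and — since a symmetric matrix of determinant $1$ is positive-definite as soon as it is positive on one vector — positivity of $Q$ reduces to $\langle Qa_1,a_1\rangle>0$. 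Unwinding: the ``cross'' inner products $\langle b_1,a_2\rangle,\langle b_2,a_1\rangle$ both vanish precisely when the boundary rays of $\mathtt C'$ coincide with those of $\mathtt C^\circ$, i.e.\ $\mathtt C'=\mathtt C^\circ$, in which case the symmetry relation is vacuous, $Q$ is automatically symmetric, and $\det Q=1$ leaves a one-parameter family (case (3)); they are both positive precisely when $\overline{\mathtt C'}\setminus\{0\}\subset\text{int}(\mathtt C^\circ)$, giving a unique $Q$ (case (2)); they are both negative precisely when $\overline{\mathtt C^\circ}\setminus\{0\}\subset\text{int}(\mathtt C')$, again a unique $Q$ (case (4)); and if they have opposite signs or exactly one of them vanishes, the relation has no solution with $s_1,s_2>0$, so no such $Q$ exists. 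A short check shows the auxiliary condition $\langle Qa_1,a_1\rangle>0$ holds automatically in cases (2), (3), (4) and fails in every remaining configuration.

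I expect the main obstacle to be the bookkeeping in this last step: one must track the cyclic orientation of the boundary rays under $Q$ and pin down the precise angular ranges involved, so as to be certain that the three sign regimes of the cross inner products match exactly the trichotomy $\overline{\mathtt C'}\setminus\{0\}\subset\text{int}(\mathtt C^\circ)$, $\mathtt C'=\mathtt C^\circ$, $\overline{\mathtt C^\circ}\setminus\{0\}\subset\text{int}(\mathtt C')$, and that the degenerate configurations — where a boundary ray of $\mathtt C'$ meets one of $\mathtt C^\circ$, or where $\mathtt C'$ is aimed too far from $\mathtt C$ for positivity of $Q$ — genuinely admit no homogeneous map. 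Everything else is routine linear algebra.
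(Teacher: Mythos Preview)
Your reduction via Proposition~\ref{prop: quadratic-poly} to classifying symmetric positive-definite $Q$ with $\det Q=1$ and $Q\mathtt C=\mathtt C'$ is exactly the paper's starting point, and your half-space analysis is fine. The difference lies in how the strict-cone case is handled. The paper takes a square root: writing $Q=A^TA$ with $\det A=1$ and acting by $A$ sends $(\mathtt C,\mathtt C',u,v)$ to $(\tilde{\mathtt C},\tilde{\mathtt C'},\tfrac{1}{2}|x|^2,\tfrac{1}{2}|y|^2)$ with $\tilde{\mathtt C}=A\mathtt C$ and $\tilde{\mathtt C'}=A^{-T}\mathtt C'=A^{-T}Q\mathtt C=A\mathtt C=\tilde{\mathtt C}$. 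So after normalisation the two cones \emph{coincide}, and the trichotomy (2)/(3)/(4) is read off from whether the single cone $\tilde{\mathtt C}$ is acute, right, or obtuse; uniqueness reduces to which positive-definite unimodular $\tilde Q$ preserve $\tilde{\mathtt C}$, which is immediate. This bypasses entirely the boundary-ray bookkeeping you anticipate.

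Your direct approach via the cross inner products $\langle b_1,a_2\rangle,\langle b_2,a_1\rangle$ is valid but, as you suspected, the correspondence is not quite as clean as stated: ``both negative'' does \emph{not} by itself characterise case~(4) --- there are configurations with both cross products negative where $\mathtt C'$ lies entirely outside $\mathtt C^\circ$ rather than containing it, and in those the resulting $Q$ has $\langle Qa_1,a_1\rangle<0$. So the positivity check is not merely auxiliary but essential to the trichotomy, and carrying it through for all angular configurations is exactly the tedious case analysis the paper's normalisation avoids. Your approach works, but buys nothing over the square-root trick and costs the bookkeeping.
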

        \begin{proof}[Proof of Corollary]
            
            Assume such a homogenous optimal transport map $((\mathtt C, dx), (\mathtt C', dy), u, v)$ exist, then Proposition~\ref{prop: quadratic-poly} implies $u$ and $v$ are positive quadratic polynomials which are Legendre dual. Hence there exists an affine transformation $A$ with $\det A = 1$ that puts $u$ and $v$ into its standard form, which means
            \[A\cdot ((\mathtt C, dx), (\mathtt C', dy), u, v) = ((\tilde{\mathtt C}, dx), (\tilde{\mathtt C'}, dy), \frac{|x|^2}{2}, \frac{|y|^2}{2}). \]
            In this new form, the optimal transport map is just given by the identity map, hence we have $\tilde{\mathtt C} = \tilde{\mathtt C'}$, and the four cases correspond to different types of cones $\tilde{\mathtt C}$. If $\tilde{\mathtt C}$ is a half-space (which, after a rotation can be assumed to be the upper half-space), that puts us into case 1. In this case, there is a 1-parameter family of optimal transport maps given by
            \[u_{\lambda}(x) = \frac{\lambda x_1^2}{2}+\frac{x_2^2}{2\lambda}. \]
            If $\tilde{\mathtt C}$ is an acute cone (i.e. cone angle $\theta<\frac{\pi}{2}$), then we are in case 2, and in this case the optimal transport map is unique. If $\tilde{\mathtt C}$ has cone angle $\theta = \frac{\pi}{2}$, then we are in case 3. In this case after rotation it can be assumed that $\tilde{\mathtt C} = \{x_1>0, x_2>0\}$) and again there is a 1-parameter family of optimal transport maps given by $u_{\lambda}(x) = \frac{\lambda x_1^2}{2}+\frac{x_2^2}{2\lambda}$. Finally if $\tilde{\mathtt C}$ is an obtuse cone, (i.e. cone angle $\theta>\frac{\pi}{2}$) then we are in case 4, and in this case, the homogeneous optimal transport map is unique as well. 
        \end{proof}

        The next theorem says that if we are in case 4, the sections are always round. 
        \begin{theorem}\label{thm: 2D-case-4-round}
            Let $((\Omega, dx), (\Omega', dy), u, v)$ be an optimal transport map between convex domains in $\mathbb R^2$ satisfying $0\in\partial\Omega\cap\partial\Omega'$ and $0 = u(0) = |\nabla u|(0)$. Let $(\mathtt C, \mathtt C')$ be the tangent cones of $(\Omega, \Omega')$ at $0$. If $(\mathtt C, \mathtt C')$ are both strict cones and $\overline{\mathtt C^{\circ}}\setminus\{0\}\subset \text{int}(\mathtt C')$, then the sections of $(u, v)$ are round at $0$. 
        \end{theorem}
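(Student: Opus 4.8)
The plan is to argue by contradiction, using a blow-up together with the classification in Corollary~\ref{corr: 2D-tangent-cones}. Suppose the sections of $(u,v)$ at $0$ are not round. Since the inverse optimal transport map has tangent cones $(\mathtt C',\mathtt C)$, which by polarity satisfy $\overline{(\mathtt C')^{\circ}}\setminus\{0\}\subset \text{int}(\mathtt C)$ — again the ``case $4$'' condition — it suffices to treat the failure of roundness of the sections $S_h^c(u,0)$, the same argument applied to the inverse map then handling $v$. Write the John normalization of $S_h^c(u,0)$ as $A_h=h^{-1/2}M_h$ with $M_h$ symmetric positive definite and $\det M_h=1$; failure of roundness produces $h_i\to 0$ along which the largest eigenvalue $\lambda_i$ of $M_{h_i}$ tends to $\infty$. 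Passing to a subsequence, Theorem~\ref{thm: homog-of-blowup-lebesgue-measure} gives a blow-up $((\mathtt D,c\,dx),(\mathtt D',c'\,dy),u_\infty,v_\infty)$ with $\mathtt D,\mathtt D'$ convex cones of nonempty interior and $0$ on their boundary and with $u_\infty,v_\infty$ homogeneous of degree $2$; since $g=g'=1$, the rescaled densities converge to equal positive constants, so $\det D^2u_\infty$ is a positive constant and Proposition~\ref{prop: quadratic-poly} shows $u_\infty$ is the restriction to $\mathtt D$ of a positive definite quadratic. Hence $(\mathtt D,\mathtt D')$ is one of the four types in Corollary~\ref{corr: 2D-tangent-cones}.

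The second step is to pin down the degeneration of the blow-up cones. Because $\Omega\subseteq\mathtt C$ with $\mathtt C$ the tangent cone, and because $\|M_{h_i}\|\ll h_i^{-\eps}$ for every $\eps>0$ by Proposition~\ref{prop: effective-C-1-1-eps}, the rescaled domains $A_{h_i}\Omega$ have the same locally Hausdorff limit as $M_{h_i}\mathtt C$, and $\Omega_i'$ the same limit as $M_{h_i}^{-1}\mathtt C'$ — \emph{provided} the sections reach the two boundary arcs of $\Omega$ (resp. $\Omega'$) at the right rate, which is the point where the hypothesis enters (see below). Granting this, write $M_{h_i}=O_i\,\operatorname{diag}(\lambda_i,\lambda_i^{-1})\,O_i^{t}$ with $O_i\in O(2)$, $\lambda_i\to\infty$, and (after a further subsequence) $O_i\to O_\infty$; put $v=O_\infty e_1$, $w=O_\infty e_2$, so $v\perp w$. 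A strict cone in $\mathbb R^2$ that is stretched in a direction $v$ by a factor $\to\infty$ (and compressed in $v^{\perp}$) converges, \emph{when the limit has nonempty interior}, to the half-plane with boundary line $\mathbb R v$. Applying this to $\mathtt C$ under $M_{h_i}$ (stretch direction $v$) and to $\mathtt C'$ under $M_{h_i}^{-1}$ (stretch direction $w$) forces $\mathtt D$ to be the half-plane with $\partial\mathtt D=\mathbb R v$ and $\mathtt D'$ the half-plane with $\partial\mathtt D'=\mathbb R w$; in particular $\partial\mathtt D\perp\partial\mathtt D'$, so, with $l,l'$ the linear functionals defining $\mathtt D,\mathtt D'$, one has $l\cdot l'=0$. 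But by Corollary~\ref{corr: 2D-tangent-cones} a pair of half-planes carries a homogeneous optimal transport map only when $l\cdot l'>0$ (case $1$), while cases $2$--$4$ require both cones to be strict. This contradicts the existence of the blow-up, so the sections of $u$ — and, by the symmetric argument, of $v$ — are round.

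The crux is the claim that $A_{h_i}\Omega$ converges to all of $\lim_i M_{h_i}\mathtt C$ rather than to a proper subcone; concretely this amounts to a lower bound $S_h(u,0)\supseteq c\,(B_{\sqrt h}\cap\mathtt C)$, together with the analogue for $v$. This is the ``dual'' counterpart of the bilinear estimate $x\cdot y\ge\delta|x||y|$ used in the proof of Theorem~\ref{thm: strong-oblique-implies-round}: here the obtuse condition $\overline{(\mathtt C')^{\circ}}\setminus\{0\}\subset\text{int}(\mathtt C)$ and its polar $\overline{\mathtt C^{\circ}}\setminus\{0\}\subset\text{int}(\mathtt C')$ are what control the sections from the inside, and I would combine them with the monotonicity formula (Theorem~\ref{thm: monotonicity}), which gives $|D_r(u,0)|\sim r^2$ hence $|S_h(u,0)|\sim h$, and with Propositions~\ref{prop: properties-of-sections} and~\ref{prop: comparison between sections and extrinsic ball}, to show that the centered sections of $u$ and $v$ have inner radius $\gtrsim\sqrt h$ inside $\mathtt C$ and $\mathtt C'$ respectively. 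I expect this lower bound — which guarantees that the blow-up cones are the full degenerate limits — to be the main obstacle; once it is available, the cone-limit computation of the second step and the appeal to Corollary~\ref{corr: 2D-tangent-cones} are routine.
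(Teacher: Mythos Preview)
Your blow-up strategy is natural, but as written it has a genuine gap, and it is different from what the paper does.

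The central problem is your ``crux.'' You say that showing $A_{h_i}\Omega\to\lim_i M_{h_i}\mathtt C$ ``concretely amounts to'' the lower bound $S_h(u,0)\supseteq c\,(B_{\sqrt h}\cap\mathtt C)$. But that lower bound, combined with the area bound $|S_h(u,0)|\sim h$ that you already have from monotonicity, immediately forces $\mathrm{diam}\,S_h(u,0)\lesssim\sqrt h$, which is exactly the roundness you are trying to prove. So the sketch you give for the crux is circular: the monotonicity formula only gives $|S_h|\sim h$, and for a convex set in the plane this says nothing about the inner radius --- the whole difficulty is that the section could be $\sqrt h\lambda\times\sqrt h\lambda^{-1}$. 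The obtuse hypothesis $\overline{\mathtt C^\circ}\setminus\{0\}\subset\mathrm{int}(\mathtt C')$ does not give you a bilinear bound $x\cdot y\geq\delta|x||y|$ on $\mathtt C\times\mathtt C'$ (that is the acute case), so the mechanism of Theorem~\ref{thm: strong-oblique-implies-round} is unavailable, and you have not supplied a replacement.

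There is also a secondary gap in your cone-limit step. The claim that a strict cone stretched along $v$ converges to the half-plane with boundary $\mathbb R v$ (whenever the limit has nonempty interior) is false: if $v$ and $w$ are the two edges of a right-angle cone, the stretched cone is unchanged, so the limit is a quarter-plane. In that scenario your ``two perpendicular half-planes'' contradiction does not fire; one can still reach a contradiction by showing the hypothesis forbids both $w\in\partial\mathtt C$ and $v\in\partial\mathtt C'$ simultaneously, but you do not argue this, and it requires using $\overline{\mathtt C^\circ}\setminus\{0\}\subset\mathrm{int}(\mathtt C')$ in a way your write-up never invokes.

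The paper proceeds differently and more directly. After an affine normalization making $\mathtt C$ the first quadrant and $\mathtt C'=\{y\geq -ax,\ x\geq -ay\}$ with $0<a<1$, it analyzes the non-centered sections $S_h(u,0)$ by elementary convex geometry: trapping $S_h(u,0)$ between the triangle on the two boundary-hitting points and a parallelogram determined by the supporting hyperplanes, it shows that if $S_h(u,0)$ is highly eccentric then its long axis must point within any prescribed $\theta$ of a coordinate axis. The same argument for $v$ forces the long axis of $S_h(v,0)$ to point along an edge of $\mathtt C'$. Since the edges of $\mathtt C$ and $\mathtt C'$ are \emph{not} perpendicular (this is where $0<a<1$, i.e.\ the obtuse hypothesis, enters), one finds $x_h\in S_h(u,0)$ and $y_h\in S_h(v,0)$ with $|x_h|,|y_h|\gg h^{1/2}$ and $|x_h\cdot y_h|\geq\delta|x_h||y_h|\gg h$, contradicting the section duality of Proposition~\ref{prop: properties-of-sections}. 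No blow-up is taken and no appeal to Corollary~\ref{corr: 2D-tangent-cones} is needed; the hypothesis is used once, precisely to rule out perpendicularity of the long-axis directions.
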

        \begin{proof}            
            First we normalize $(\mathtt C, \mathtt C')$. By applying an affine transformation $A$ with $\det A = 1$, we can arrange that \[\mathtt C = \{x\geq 0, y\geq 0\}\] 
            and 
            \[\mathtt C' = \{y\geq -ax, x\geq -ay\}\] for some $0<a<1$. 
            Moreover since $(\Omega, \Omega')$ is asymptotic to $(\mathtt C, \mathtt C')$ near zero, that means we can write 
            \[\Omega = \{(x, y): y\geq f_+(x), x\geq f_-(y)\}\]
            and
            \[\Omega' = \{(x, y):y\geq-ax+g_+(x), x\geq -ay+g_-(y)\}\]
            where $f_{\pm}, g_{\pm}\geq 0$ are convex functions with $\lim_{t\to 0}\frac{f_{\pm}(t)}{t} = \lim_{t\to 0}\frac{g_{\pm}(t)}{t} = 0$. From here, the basic idea is to show that if the sections are not round, then the direction of the long axis must align with the boundary of the cone $\mathtt C$. By symmetry, this must be true for both $u$ and $v$, but that is inconsistent with the condition $\overline{\mathtt C^{\circ}}\setminus\{0\}\subset \text{int}(\mathtt C')$. 
            
            Let $E_h$ be the John Ellipsoid of $S_h(u, 0)$, which by uniform density (Lemma~\ref{lem: uniform density}) is comparable to $S^c_h(u, 0)$. Suppose for contradiction that $E_h$ is highly eccentric for $h\ll 1$. If we let $v_1(h), v_2(h)$ denote unit vectors in the directions of the long and short axis of $E_h$ respectively, and $l_1(h), l_2(h)$ be the lengths of the long and short axis, then this means we have $v_1\perp v_2$, and $l_1\gg l_2$ for $h\ll 1$. Now we claim that for any $\theta>0$, there exists $h_0>0$ such that for all $0<h<h_0$, the axis vectors must lie $\theta$ close to the two coordinate axis, that is $v_1(h), v_2(h)\in B_{\theta}(e_i)\cup B_{\theta}(e_2)$ for $h$ sufficiently small. To see this we first note that since $l_1(h)\gg l_2(h)$ for $h$ sufficiently small, therefore $\partial S_h(u, 0)\cap \partial\Omega$ is two points, which we call $(x^{\pm}, y^{\pm})$ satisfying $f_{+}(x_+) = y_+$ and $f_-(y_-) = x_-$. We denote 
            \[T_h := \text{Convex Hull}(\{(0, 0), (x_+, y_+), (x_-, y_-)\})\]
            and 
            \[R_h := \{x\geq 0, y\geq 0, y\geq a^{-1}(x-x_+), x\geq a^{-1}(y-y_-) \}, \]
            then we have
            \[T_h\subset S_h(u, 0)\subset R_h.\]
            Moreover without loss of generality, we can assume $x_+\geq y_-$ (otherwise we flip the two coordinate axis), then there exist $c>0$, and $x'\in T_h$ such that $B_{c|y_-|(x')}\subset T_h$, and there exist $C>1$ such that $R_h\subset B_{C|x_+|}(0)$. 
            Without lost of generality, we can assume that $x_+\geq y_-$ (if not, then we can flip the two axis), then for any $C>0$, there is sufficiently small $h$ for which $x_+\geq Cy_-$. If not, then $x_+\sim y_-$ and
            \[B_{c|x_+|}(x')\subset T_h \subset S_h(u, 0)\subset R_h\subset B_{C|x_+|}(0),\] 
            which would imply that $S_h(u, 0)$ are round. Next we claim for any $\delta>0$, there exist sufficiently small $h$ for which $S_h(u, 0)\subset P_{\delta|x_+|}$, where
            \[P_{\delta|x_+|} = R_h\cap\{y\leq \delta|x_+|\}. \]
            If not, then there exist $(x_0, \delta|x_+|)\in S_h$ with $x_0\leq 2|x_+|$, which means
            \[ \text{ConvexHull}\{(0, 0), (x_+, y_+), (x_0, \delta|x_+|)\}\subset S_h(u, 0),\]
            and if $x_+$ is sufficiently small, we have $y_+<\frac{\delta}{2}x_+$, therefore
            \[B_{c\delta |x_+|}(x')\subset \text{ConvexHull}\{(0, 0), (x_+, y_+), (x_0, \delta|x_+|)\}\subset S_h(u, 0)\]
            which would mean that the sections $S_h(u, 0)$ are round. 
            
            Thus for $h$ sufficiently small
            \[T_h\subset S_h(u, 0)\subset P_{\delta|x_+|}\subset E(C|x_+|, \delta|x_+|).\]
            If we pick $\delta$ sufficiently small, then it follows from this that the long axis of the John ellipsoid of $S_h(u, 0)$ must point $\theta$-close to the $e_1$-direction, which means $v_1\in B_{\theta}(e_1)$. 

            Now we show this is a contradiction. The same argument applied to both $u$ and $v$ shows that we can find sequence $h_i\to 0$ for which $S_{h_i}(u, 0)$ and $S_{h_i}(v, 0)$ are becoming more and more eccentric, and the long axis of $S_{h_i}(u, 0)$ point towards on the coordinate axis, while the long axis of $S_{h_i}(v, 0)$ point towards an axis of $\mathtt C'$, moreover since we know by Lemma~\ref{lem: uniform density} that $|S_h(u, 0)|\sim |S_h(v, 0)|\sim h$, this implies there exist $x_h\in S_h(u, 0)$ pointing in the direction of the long axis (i.e. $\frac{x_h}{|x_h|}\in B_{\theta}(e_i)$) with $|x
            _h|\gg h^{\frac 1 2}$, and there exist $y_h\in S_h(v, 0)$ pointing in the direction of the long axis (i.e. $\frac{y_h}{|y_h|}\in B_{\theta}((\frac{1}{\sqrt{1+a^2}}, -\frac{a}{\sqrt{1+a^2}}))$) such that $|y_h|\gg h^{\frac 1 2}$. Since the axis of $\mathtt C$ and $\mathtt C'$ are not perpendicular, it follows that for $h$ sufficiently small
            \[|x_h\cdot y_h| \geq \delta |x_h||y_h|\gg h,\]
            but this contradicts Proposition~\ref{prop: properties-of-sections}. 
        \end{proof}
        As a consequence of this, we can give a complete answer to Question~\ref{ques: roundness} for polygonal domains in the plane. 
        \begin{theorem}\label{thm: planar-C-1-1-reg-characterization}
            Let $((\Omega, dx), (\Omega', dy), u, v)$ be an optimal transport map between two polygonal domains in $\mathbb R^2$. Suppose $0\in\partial\Omega\cap\partial\Omega'$ with $0 = u(0)= |\nabla u|(0)$ and let $(\mathtt C, \mathtt C')$ be the tangent cones of $(\Omega, \Omega')$ at $0$. Then the sections at $0$ are round iff there is a homogeneous optimal transport map between $(\mathtt C, \mathtt C')$. (i.e. iff we are in one of the situations of Corollary~\ref{corr: 2D-tangent-cones}.)
        \end{theorem}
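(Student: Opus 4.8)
The plan is to establish the two implications separately. The implication ``round $\Rightarrow$ a homogeneous optimal transport map exists between $(\mathtt C,\mathtt C')$'' is immediate: it is the contrapositive of Theorem~\ref{thm: non-round}, applied with the homogeneous densities $g\equiv g'\equiv 1$ (homogeneous of degree $(0,0)$), which says that if no homogeneous optimal transport map from $(\mathtt C,dx)$ to $(\mathtt C',dy)$ exists then the centered sections of $(u,v)$ at $0$ are not round. For the reverse implication, suppose such a homogeneous map exists. By Corollary~\ref{corr: 2D-tangent-cones} the pair $(\mathtt C,\mathtt C')$ then falls into one of the four cases listed there, and I would treat the four cases in turn.

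In the acute/strongly oblique case (Case $2$) the hypothesis $\overline{\mathtt C'}\setminus\{0\}\subset \text{int}(\mathtt C^{\circ})$ is exactly that of Theorem~\ref{thm: strong-oblique-implies-round}, and in the obtuse case (Case $4$) the hypothesis $\overline{\mathtt C^{\circ}}\setminus\{0\}\subset \text{int}(\mathtt C')$ is exactly that of Theorem~\ref{thm: 2D-case-4-round}; in both cases those theorems give the roundness of the centered sections of $u$ and of $v$ at $0$ directly. So the real content is the two borderline cases: the half-plane case (Case $1$) and the right-angle case (Case $3$).

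For Cases $1$ and $3$ I would argue by reflection. Using the determinant-one affine normalization from the proof of Corollary~\ref{corr: 2D-tangent-cones} --- which preserves $g\equiv g'\equiv 1$ and, being a fixed map, preserves roundness up to a harmless constant --- we may assume $\mathtt C=\mathtt C'=Q:=\{x_1>0,\ x_2>0\}$ in Case $3$ and $\mathtt C=\mathtt C'=\{x_2>0\}$ in Case $1$. Since $\Omega,\Omega'$ are polygons and $0$ is a vertex (Case $3$) or a relative interior point of an edge (Case $1$), $\Omega$ and $\Omega'$ coincide with these cones in a small ball $B_{\rho}$ about $0$. Now $\bar u$ and $\bar v$ are $C^{1}_{loc}$ and $(\bar u,\bar v)=(v^{\star},u^{\star})$, so $\nabla\bar u$ and $\nabla\bar v$ are mutually inverse homeomorphisms between $\overline{\Omega}$ and $\overline{\Omega'}$; in particular $\nabla\bar u$ carries $\partial\Omega\cap B_{\rho}$ homeomorphically onto $\partial\Omega'$ near $0$ and fixes $0$. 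In Case $3$ the two edges of $\partial\Omega$ at $0$ must therefore go to the two edges of $\partial\Omega'$ at $0$, and the ``crossed'' matching is excluded because it would force $u\equiv 0$ along $\partial\Omega$ near $0$, contradicting the strict convexity of $u$; hence $\nabla u$ maps each edge to the corresponding edge, which gives the Neumann conditions $\partial_1 u=0$ on $\{x_1=0\}$ and $\partial_2 u=0$ on $\{x_2=0\}$ near $0$. (In Case $1$ one gets similarly $\partial_2 u=0$ on $\{x_2=0\}$ near $0$; alternatively, Case $1$ can be handled by applying the local $C^{2,\gamma}$ estimates behind Theorem~\ref{thm: global-C-2-a-regularity}, since there the boundary is flat and obliqueness holds by hypothesis.) I would then even-reflect, setting $\widehat u(x):=u(|x_1|,|x_2|)$ in Case $3$ and $\widehat u(x):=u(x_1,|x_2|)$ in Case $1$. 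The vanishing of the normal derivative makes $\widehat u$ a $C^{1}$ convex function on $B_{\rho}$, and since $\nabla u$ sends each coordinate-axis segment into a line through the origin (a Lebesgue null set), the Monge--Amp\`ere measure of $\widehat u$ carries no mass on the reflected axes or at $0$, so $\det D^2\widehat u=1$ on $B_{\rho}$ in the Alexandrov sense. By Caffarelli's interior regularity theory $\widehat u$ is strictly convex and, the right-hand side being the constant $1$, smooth near $0$; hence $D^2u(0)=D^2\widehat u(0)$ exists and is positive definite, so $S^c_h(u,0)$ is comparable to the fixed ellipsoid $\{\,x\cdot D^2u(0)\,x\le 2h\,\}$ and is therefore round. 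Applying the same argument to $v$ (whose tangent-cone data at $0$ is of the same type), or invoking Legendre duality, gives roundness of the sections of $v$ as well, completing Cases $1$ and $3$.

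The main obstacle is the reflection step, and within it two points. First, one must deduce the Neumann conditions $\partial_i u=0$ on the appropriate edges from the boundary behaviour of the Brenier map; this rests on $\nabla\bar u$ being a homeomorphism of the closures mapping boundary to boundary, together with the fact that in the normalized picture the edges are coordinate segments and the crossed matching is ruled out by strict convexity. Second, one must check that the even reflection $\widehat u$ is a genuine Alexandrov solution, i.e.\ that its Monge--Amp\`ere measure picks up no singular part along the reflected axes or at $0$ --- which is exactly where the vanishing of the normal derivative is used. Once these are in hand, the remaining ingredients --- Caffarelli's interior regularity and the citations to Theorems~\ref{thm: strong-oblique-implies-round} and~\ref{thm: 2D-case-4-round} --- are routine.
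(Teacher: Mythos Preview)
Your proposal is correct and follows essentially the same route as the paper: the necessary direction via Theorem~\ref{thm: non-round}, Cases~2 and~4 via Theorems~\ref{thm: strong-oblique-implies-round} and~\ref{thm: 2D-case-4-round}, and Cases~1 and~3 via reflection to turn the origin into an interior point. The only difference is cosmetic: in Case~3 the paper reflects across one edge to reduce to Case~1, and then reflects again, whereas you reflect across both coordinate axes simultaneously; your treatment is also more careful than the paper's in justifying the Neumann conditions (via the edge-to-edge matching and the exclusion of the crossed case by strict convexity) and in checking that the even reflection is an Alexandrov solution with no singular mass on the axes.
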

        \begin{proof}
            If there is no homogeneous optimal transport map between the tangent cones, by Theorem~\ref{thm: non-round}, the sections cannot be round at $0$. On the other hand, if there is a homogeneous optimal transport map between the tangent cones $(\mathtt C, \mathtt C')$, then we are in one of the four situations of Corollary~\ref{corr: 2D-tangent-cones}, so we consider the 4 cases seperately. In the first case, we can reflect the optimal transport map across the boundary of the half-space, which turns the origin into an interior point, which means the sections are round. In the second case, Theorem~\ref{thm: strong-oblique-implies-round} tells us that the sections are always round. In the third case, we can reflect the optimal transport map across one of the boundary sides to return to case 1. In the fourth case, Theorem~\ref{thm: 2D-case-4-round} tells us that the sections are always round. 
        \end{proof}

        \begin{remark}
            In the case when the domains $(\Omega, \Omega')$ are not polygonal, if we are in case 2 and 4 of Corollary~\ref{corr: 2D-tangent-cones}, then Theorem~\ref{thm: strong-oblique-implies-round} and Theorem~\ref{thm: 2D-case-4-round} still applies. The only cases where roundness is not known is cases 1 and 3. In those cases, any blow-up must live on the tangent cone, but there is a one-parameter family of homogeneous optimal transport maps between the tangent cones, and it is conceivable that there could be a solution that interpolates between different members of this family at different scales. 
        \end{remark}
	\subsection{Homogeneous densities}
        In this section, we focus on the situation when the domain $\Omega\subset \{x_n\geq 0\}$ is locally a strict cone, and the target is locally equal to the half-space $\{y_n\geq 0\}$ equipped with a homogenous density $g'(y) = y_n^k$. This situation arises in the study of complete Calabi-Yau metrics of Tian-Yau type in our previous work with S.-T. Yau \cite{Collins-Tong-Yau}, and as a limiting description of intermediate complex structure degeneration of Calabi-Yau manifolds, as shown in the work of Li \cite{Li-intermediate}. In fact, together with S.-T. Yau, we have constructed homogeneous optimal transport maps between the cones of this type \cite[Corollary 1.1]{Collins-Tong-Yau}. In this situation, we show that the roundness of sections always hold. 
        \begin{theorem}\label{thm: round-degenerate-density}
            Suppose that near the origin, $\Omega$ is equal to a strict cone $\mathtt C = \{x_n\geq \phi_{\mathtt C}(x')\}$, so $\Omega\cap B_{r_0} =\mathtt C\cap B_{r_0}$ and $\Omega'$ is locally a halfspace $\Omega'\cap B_{r_0} = \{y_n\geq 0\}\cap B_{r_0}$, and moreover assume the density $g(x)$ which is homogeneous of degree $l$ and $g'(y) = y_n^k$. If in addition $k> l\geq 0$, then the sections of $u$ are round. In particular, there exist $C>1$, and $h_0>0$ such that for all $0<h<h_0$, we have
            \[B_{C^{-1}h^{\frac{1}{1+\alpha}}} \subset S^c_h(u, 0)\subset B_{Ch^{\frac{1}{1+\alpha}}}\]
            and 
            \[B_{C^{-1}h^{\frac{1}{1+\frac{1}{\alpha}}}}\subset S_h^c(v, 0)\subset B_{Ch^{\frac{1}{1+\frac{1}{\alpha}}}},\]
            where $\alpha = \frac{n+l}{n+k}\in (0, 1]$. 
        \end{theorem}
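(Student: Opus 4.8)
The plan is to reduce the theorem to showing that the \emph{non-centered} sections are comparable to balls intersected with the domains, i.e. that there are $r(h)\sim h^{\frac{1}{1+\alpha}}$ and $r'(h)\sim h^{\frac{1}{1+1/\alpha}}$ with
\[
B_{C^{-1}r(h)}\cap\Omega\subset S_h(u,0)\subset B_{Cr(h)}\cap\Omega,\qquad B_{C^{-1}r'(h)}\cap\Omega'\subset S_h(v,0)\subset B_{Cr'(h)}\cap\Omega',
\]
together with the uniform density bounds $|S^c_h(u,0)\cap\Omega|\geq\delta|S^c_h(u,0)|$ and $|S^c_h(v,0)\cap\Omega'|\geq\delta|S^c_h(v,0)|$. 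Granting these, the passage to \emph{centered} sections is identical to the last paragraph of the proof of Theorem~\ref{thm: strong-oblique-implies-round}: since $\mathtt C$ is a fixed full-dimensional cone, $B_{C^{-1}r(h)}\cap\mathtt C$ contains an honest ball of radius $\sim r(h)$, which via $S_{ch}(u,0)\subset S^c_h(u,0)$ (Proposition~\ref{prop: properties-of-sections}(3)) forces every principal axis of the John ellipsoid of $S^c_h(u,0)$ to be $\gtrsim r(h)$, while $S^c_h(u,0)\cap\Omega\subset S_{Ch}(u,0)\subset B_{C'r(h)}$ together with the density bound forces $|S^c_h(u,0)|\sim r(h)^n$; hence all axes are $\sim r(h)$ and $S^c_h(u,0)\sim B_{r(h)}$, and similarly $S^c_h(v,0)\sim B_{r'(h)}$. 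The exponents $\tfrac{1}{1+\alpha}=\tfrac{n+k}{2n+l+k}$ and $\tfrac{1}{1+1/\alpha}=\tfrac{n+l}{2n+l+k}$ are then exactly the claimed ones.

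The lower bounds (the inner inclusions) come for free from the monotonicity formula. Applying Theorem~\ref{thm: monotonicity} to $u$ and to $v$ in a ball where $\Omega=\mathtt C$ and $\Omega'=\{y_n>0\}$, the quantities $r\mapsto r^{-2\kappa_*}\mu(D_r(u,0))$ and $r\mapsto r^{-2\kappa_*}\nu(D_r(v,0))$ are non-increasing with $2\kappa_*=\tfrac{2(n+l)(n+k)}{2n+l+k}$; hence, using Proposition~\ref{prop: comparison between sections and extrinsic ball}, $\mu(S_h(u,0))\gtrsim h^{\kappa_*}$ and $\nu(S_h(v,0))\gtrsim h^{\kappa_*}$. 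Since $g\leq C|x|^l$ and $0\in\overline{S_h(u,0)}$, this gives $(\diam S_h(u,0))^{n+l}\gtrsim h^{\kappa_*}$, i.e. $\diam S_h(u,0)\gtrsim h^{\frac{1}{1+\alpha}}$, and likewise $\diam S_h(v,0)\gtrsim h^{\frac{1}{1+1/\alpha}}$; feeding these back through Proposition~\ref{prop: properties-of-sections} and convexity of the sections upgrades the diameter bounds to the desired inner inclusions, and the comparison $\nabla u(S_h(u,0))\sim S_h(v,0)$ of Proposition~\ref{prop: properties-of-sections}(4) together with the identity $|S^c_h(u,0)|\,|S^c_h(v,0)|\sim h^n$ yields the uniform density bounds.

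The crux — and the step I expect to be the main obstacle — is the \emph{upper} (outer) inclusion, where the hypotheses $\Omega'=\{y_n>0\}$, $g'=y_n^k$ and $k>l$ (equivalently $\alpha<1$) must be used essentially: there is no obliqueness between $\mathtt C$ and the half-space, so Theorem~\ref{thm: strong-oblique-implies-round} is unavailable and this is really the weighted, higher-dimensional analogue of the obtuse case of Theorem~\ref{thm: 2D-case-4-round}. The mechanism I would exploit is that the vanishing of $g'=y_n^k$ on the flat face $\{y_n=0\}\subset\partial\Omega'$ prevents $S_h(v,0)$ from concentrating toward that face: if $S_h(v,0)\subset\{0\leq y_n\leq\eta\}\cap B_\rho$ then $\nu(S_h(v,0))\lesssim\eta^{k+1}\rho^{n-1}$, and combining this with $\nu(S_h(v,0))\gtrsim h^{\kappa_*}$, with $|S^c_h(u,0)|\,|S^c_h(v,0)|\sim h^n$ and $\nabla u(S_h(u,0))\sim S_h(v,0)$, with the monotonicity lower bound for $\mu$, and with convexity, should produce a closed chain of inequalities in the extents of $S_h(u,0)$ and $S_h(v,0)$ that — precisely when $k>l$ — forces both extents to be of the single stated scale. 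An alternative and probably cleaner execution is a barrier argument: build sub- and supersolutions on $\Omega$ out of the homogeneous optimal transport map from $(\mathtt C,g\,dx)$ to $(\{y_n>0\},y_n^k\,dy)$ of degree $1+\alpha$ constructed in \cite{Collins-Tong-Yau}, absorb the $o(|x|)$ discrepancy between $(\Omega,\Omega')$ and $(\mathtt C,\{y_n>0\})$ near $0$ using the $C^{1,\delta}$-estimate (Proposition~\ref{prop: C1-delta-estimate}), and compare with $u$; here $\alpha<1$ is exactly what makes a degree-$(1+\alpha)$ barrier dominate the linear error coming from $\nabla u(0)=0$. In either approach the essential new difficulty is obtaining two-sided shape control of the sections without obliqueness, which requires using the degenerate structure of the target.

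Finally, with the two-sided inclusions and the density bounds in hand, the roundness of the centered sections of both $u$ and $v$ and the explicit exponents follow as in the first paragraph.
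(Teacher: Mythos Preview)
Your overall framework is close to the paper's, and you correctly identify that the lower bounds come from the monotonicity formula while the upper bound is the crux. However, your proposal has a genuine gap at the crucial step, and the paper's mechanism is more specific than either of your two suggestions.

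The paper does not pass through the non-centered sections at all; it works directly with the John ellipsoid of $S^c_h(v,0)$, writing $S^c_h(v,0)\sim E(l_1,\ldots,l_{n-1},d)$ where $l_1\leq\cdots\leq l_{n-1}$ are the axes in $\{y_n=0\}$ and $d=d(h)$ is the extent in the $y_n$-direction, so that by duality $S^c_h(u,0)\sim E(h/l_1,\ldots,h/l_{n-1},h/d)$. The uniform density of $S^c_h(v,0)$ in $\Omega'$ is immediate since $\Omega'$ is locally a half-space and the centered section is centered at a point of $\partial\Omega'$; no appeal to $|S^c_h(u,0)|\,|S^c_h(v,0)|\sim h^n$ is needed for this.

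The key geometric observation you are missing is this: because $\mathtt C$ is a \emph{strict} cone, say $\mathtt C\subset\{x_n\geq R^{-1}|x'|\}$, the gradient $\nabla v$ at any boundary point $y\in\{y_n=0\}$ lies in $\partial\mathtt C$, so the supporting hyperplane of $S_h(v,0)$ at any such extremal point has slope at most $R$ in the $y_n$-direction. This forces
\[
d(h)\;\lesssim\; l_i(h)\qquad\text{for every }i=1,\ldots,n-1,
\]
i.e.\ the $y_n$-axis is the \emph{shortest} axis of the John ellipsoid. This is the single structural inequality that closes your ``chain.'' Your proposed mechanism (the weight $y_n^k$ preventing concentration toward $\{y_n=0\}$) points toward a \emph{lower} bound on $d$, which is a different constraint and does not by itself yield the required relation between $d$ and the $l_i$.

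With $d\lesssim l_i$ in hand, the rest is algebra. Mass balance $\mu(S_h(u,0))\sim\nu(S_h(v,0))$ and homogeneity give
\[
(l_1\cdots l_{n-1})^2\,d^{\,k+l+2}\;\sim\;h^{\,n+l},
\]
and since $d\lesssim l_i$ this yields the \emph{upper} bound $d^{\,2n+k+l}\lesssim h^{\,n+l}$, i.e.\ $d\lesssim h^{\frac{n+l}{2n+k+l}}$. The monotonicity formula gives $l_1\cdots l_{n-1}\,d^{\,k+1}\gtrsim h^{\kappa_*}$ with $\kappa_*=\frac{(n+l)(n+k)}{2n+k+l}$; dividing the square of this by the mass balance relation yields
\[
d^{\,k-l}\;\gtrsim\;h^{\,2\kappa_*-(n+l)}\;=\;h^{\frac{(n+l)(k-l)}{2n+k+l}},
\]
and it is precisely here, and only here, that $k>l$ is used to extract the matching \emph{lower} bound $d\gtrsim h^{\frac{n+l}{2n+k+l}}$. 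Then $d\sim l_i\sim h^{\frac{n+l}{2n+k+l}}$ and the roundness of both $S^c_h(v,0)$ and $S^c_h(u,0)$ follows immediately from the dual ellipsoid description.

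Two further remarks on your write-up. First, your step ``diameter lower bound $\Rightarrow$ inner ball inclusion'' is unjustified: a convex section can have large diameter while being arbitrarily thin, so this cannot be ``upgraded by convexity'' without additional input (and the paper avoids this issue entirely by controlling all axes simultaneously). Second, your barrier alternative via the homogeneous solution of \cite{Collins-Tong-Yau} is plausible in spirit but would require its own nontrivial comparison argument near a degenerate boundary; the supporting-hyperplane observation above is both simpler and exactly what the paper does.
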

    \begin{proof}
        Let us $E_h$ be the John ellipsoid of the section $S^c_h(v)$, then $E_h\cap\{y_n = 0\}$ is also an ellipsoid centered at $0$ in $\mathbb R^{n-1}$, let $l_1(h)\leq \cdots\leq l_{n-1}(h)$ be the lengths of the principle axis of this ellipsoid, which after a rotation by $A\in SO(n-1)$, we can assume is in the direction of the coordinate axis. We also denote $d(h): = \sup\{x_n: x\in S_h^c(v, 0)\}$ to be the height of $E_h$, then there exist $C>1$ such that
        \[C^{-1}E(l_1(h), \ldots, l_{n-1}(h), d(h))\subset S_h^c(v, 0)\subset CE(l_1(h), \ldots, l_{n-1}(h), d(h)). \]
        By Proposition~\ref{prop: properties-of-sections}, we also have
        \[C^{-1}E\left(\frac{h}{l_1(h)}, \ldots, \frac{h}{l_{n-1}(h)}, \frac{h}{d(h)}\right)\subset S_h^c(u, 0)\subset CE\left(\frac{h}{l_1(h)}, \ldots, \frac{h}{l_{n-1}(h)}, \frac{h}{d(h)}\right)\]
        for $C>1$ depending additionally on the doubling constant of $g$. 
        
        Since $\Omega'$ is locally a half-space, it follows that the centered sections of $v$ has a uniform lower density bound
        \[\frac{|S_h^c(v, 0)\cap\Omega'|}{|S_h^c(v, 0)|}\geq c>0\]
        and by Proposition~\ref{prop: properties-of-sections}, the centered sections of $u$ also has a similar density bound
        \[\frac{|S_h^c(v, 0)\cap\Omega'|}{|S_h^c(v, 0)|}\geq c>0.\]
        Moreover, since $\mathtt C$ is a strict cone, we have 
        \[\mathtt C\subset \mathtt C(B_R(0)) := \{(x', x_n)\in \mathbb R^n: x_n\geq \frac 1 R|x'|\}\]
        for some $R>1$. This implies the supporting hyperplanes of the sections $S_h(v, 0)$ at points $y\in S_h(v, 0)\cap\{y_n = 0\}$ have slope at most $R$, which implies
        \[d(h)\leq  CR l_i(h) \text{ for }i = 1, \ldots, n-1. \]
        Moreover, by Proposition~\ref{prop: properties-of-sections}, 
        \[\nu(S_h(v, 0))\sim \int_{E(l_1(h), \cdots, l_{n-1}(h), d(h))\cap\{y_n\geq 0\}}y_n^kdy\sim l_1(h)\cdots l_{n-1}(h)d(h)^{k+1}\]
        and 
        \[\mu(S_h(u, 0))\sim \int_{E(\frac{h}{l_1(h)}, \cdots, \frac{h}{l_{n-1}(h)}, \frac{h}{d(h)})\cap \mathtt C}g(x)dx\sim \frac{h}{l_1(h)}\cdots \frac{h}{l_{n-1}(h)}(\frac{h}{d(h)})^{l+1}. \]
        Therefore, the equation tells us that
        \[l_1(h)\cdots l_{n-1}(h)d(h)^{k+1}\sim\frac{h}{l_1(h)}\cdots \frac{h}{l_{n-1}(h)}(\frac{h}{d(h)})^{l+1} \implies (l_1(h)\cdots l_{n-1}(h))^2d(h)^{k+l+2}\sim h^{n+l}. \]
        This implies that 
        \[d(h)^{2n+k+l}\lesssim h^{n+l} \implies d(h)\lesssim h^{\frac{n+l}{2n+k+l}}. \]
        But the monotonicity formula tell us that 
        \[\mu(S_h(v, 0)) \sim l_1(h)\cdots l_{n-1}(h)d(h)^{k+1}\gtrsim h^{\frac{n+l}{1+\frac{n+l}{n+k}}}\implies d(h)^{\frac{k-l}{2}}\gtrsim h^{\frac{(n+l)(k-l)}{2(2n+k+l)}}\]
        since $k\geq l\geq 0$, this implies
        \[d(h)\gtrsim h^{\frac{n+l}{2n+k+l}}. \]
        Therefore we have $d(h)\sim l_i(h)\sim h^{\frac{n+l}{2n+k+l}}$, and the proposition is proved. 
\end{proof}

\begin{corr}
    Under the assumptions of Theorem~\ref{thm: round-degenerate-density}, blow-ups exists and are homogeneous optimal transport maps between the corresponding tangent cones. 
\end{corr}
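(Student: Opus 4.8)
The plan is to combine Theorem~\ref{thm: round-degenerate-density} with Theorem~\ref{thm: homog-of-blowup-density}; the corollary records nothing more than their juxtaposition. First I would observe that the roundness bound produced by Theorem~\ref{thm: round-degenerate-density},
\[
B_{C^{-1}h^{\frac{1}{1+\alpha}}} \subset S^c_h(u, 0) \subset B_{Ch^{\frac{1}{1+\alpha}}}, \qquad \alpha = \tfrac{n+l}{n+k},
\]
is, since $\frac{1}{1+\alpha} = \frac{1}{1+(n+l)/(n+k)}$, exactly the hypothesis on the shape of $S_h^c(u,0)$ required by Theorem~\ref{thm: homog-of-blowup-density}. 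I would also note that the present densities are admissible for that theorem: $g$ is homogeneous of degree $l$ and $C^\alpha$ on $\overline\Omega$ by the standing assumptions of Theorem~\ref{thm: round-degenerate-density}, while $g'(y) = y_n^k$ is homogeneous of degree $k$ and lies in $C^{\min(k,1)}$ up to the boundary of $\{y_n \geq 0\}$, hence is $C^{\alpha'}$ for some $\alpha' \in (0,1)$.

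With these two points in place, Theorem~\ref{thm: homog-of-blowup-density} applies directly: for any sequence $h_i \to 0$, after passing to a subsequence there is a blow-up $((\mathtt C, g(x)dx), (\mathtt C', g'(y)dy), u_\infty, v_\infty)$ of $((\Omega, g(x)dx), (\Omega', g'(y)dy), u, v)$ at $0$, obtained with the normalization matrices $A_{h_i} = h_i^{-1/(1+\alpha)}\,\mathrm{Id}$ (which the roundness bound makes admissible). Here $\mathtt C$ and $\mathtt C'$ are the tangent cones of $\Omega$ and $\Omega'$ at $0$ --- by hypothesis the strict cone $\{x_n \geq \phi_{\mathtt C}(x')\}$ and the half-space $\{y_n \geq 0\}$, respectively --- and $u_\infty$, $v_\infty$ are homogeneous of degrees $1 + \frac{n+l}{n+k}$ and $1 + \frac{n+k}{n+l}$. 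This is precisely the assertion of the corollary.

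There is no substantive obstacle: all the analytic work --- the roundness estimate via the monotonicity formula, and the blow-up compactness using Proposition~\ref{prop: C1-delta-estimate} together with convergence of minimal convex extensions in $C^{1,\alpha}_{loc}$ --- has already been carried out in the two cited theorems. The only detail one might wish to spell out is the verification that $g'(y) = y_n^k$ is H\"older continuous up to the boundary, so that the compactness arguments behind Theorem~\ref{thm: homog-of-blowup-density} apply verbatim.
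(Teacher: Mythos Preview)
Your proposal is correct and follows exactly the paper's approach: the paper's proof is the single sentence ``This follows immediately from Theorem~\ref{thm: homog-of-blowup-density},'' and your argument simply unpacks this by verifying that the roundness bound from Theorem~\ref{thm: round-degenerate-density} is precisely the section-shape hypothesis of Theorem~\ref{thm: homog-of-blowup-density}. The extra verification you provide (that $g'(y)=y_n^k$ is H\"older up to the boundary and that the normalization matrices $A_{h_i}=h_i^{-1/(1+\alpha)}\mathrm{Id}$ are admissible) is harmless elaboration of what the paper leaves implicit.
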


\begin{proof}
This follows immediately from Theorem~\ref{thm: homog-of-blowup-density}.
\end{proof}

\begin{remark}
    A consequence of this is that homogeneous solutions of the optimal transport problem constructed in \cite{Collins-Tong-Yau}, which is expected to serve as a model at infinity for ``generalized Tian-Yau metrics", arises as a blow-up limit for the optimal transport problem describing intermediate complex structure degenerations of Calabi-Yau hypersurfaces \cite{Li-intermediate}. This provides further evidence that generalized Tian-Yau metrics should exist and should arise as bubbles in the intermediate complex structure degeneration of Calabi-Yau hypersurfaces. 
\end{remark}

\section{Flat directions and mixed homogeneity}\label{sec: flat-side}
Let $\mathtt C = \{x_n\geq \phi_{\mathtt C}(x')\}\subset \mathbb R^m$ be a strict cone. In this section, we consider optimal transport maps from $(\mathbb R^{n-m}\times \mathtt C, dx)$ to $(\mathbb R^{n-1}\times \mathbb R_+, y_n^kdy)$. This situation arises in our previous work with S.-T. Yau \cite{Collins-Tong-Yau}. We'll see that in this case, the existence of flat-sides in the domain breaks the homogeneity, and the solutions exhibit mixed homogeneous behaviour. 

\subsection{A Pogorelov estimate along flat directions} We first prove Pogorelov-type estimates for solutions in the flat directions.  When $\mathtt{C}= \mathbb{R}_+$ is a one-dimensional cone, a Pogorelov-type estimate was established by Jhaveri-Savin \cite[Proposition 4.1]{Jhaveri-Savin} using a doubling trick.  The following estimate generalizes this result to arbitrary cones and general densities.

\begin{prop}\label{prop: VeryGenPog}
Let $\Omega\subset \mathbb{R}^{n}_{x}$ and $ \Omega' \subset \mathbb{R}_y^n$.  Suppose that the following structural conditions hold: 
\begin{itemize}
    \item[(i)] There is a smooth convex function $\phi_\Omega(x) =\phi_\Omega(x_2,\ldots, x_{n-1})$ such that $\phi_\Omega(0)=0$ and
    \[
    \Omega \cap B_1(0) = \{ x\in B_1(0) : x_n \geq \phi_\Omega(x_2,\ldots x_{n-1})\}
    \]
    \item[(ii)] There is a smooth convex function $\phi_{\Omega'}(y)=\phi_{\Omega'}(y_1,\ldots, y_{n-1})$ such that $\phi_{\Omega'}(0)=0$ and
    \[
    \Omega'\cap B_1(0) =  \{ y\in B_1(0) : y_n \geq \phi_{\Omega'}(y_1,\ldots y_{n-1})\}
    \]
    \item[(iii)] There is a smooth, positive function $g(x)$ defined on $\overline{\Omega\cap B_1(0)} $ and such that $g(x)=g(x_2,\ldots,x_n)$ is independent of $x_1$.
    \item[(iv)]There is a smooth, positive function $g'(y)$ defined on $\overline{\Omega'\cap B_1(0)} $ and such that $g'(y)$ is log-concave and satisfies
    \[
    \sum_{p=1}^{n}\frac{\del g'}{\del y_p}y_p\leq \kappa g'(y)
    \]
    for some $\kappa >0$ and for all $y\in \overline{\Omega'\cap B_1(0)}$.
\end{itemize}
Suppose that $u$ is a convex function satisfying $0 = u(0) = |\nabla u|(0)$ and 
\[
\begin{aligned}
g'(\nabla u)\det D^2u &= g(x),& \quad \text{ in } \Omega \cap B_1(0)\\
\frac{\del u}{\del x_n} &\geq \phi_{\Omega'}(\frac{\del u}{\del x_1}, \ldots, \frac{\del u}{\del x_{n-1}}),& \quad \text{ in } \Omega \cap B_1(0)\\
\frac{\del u}{\del x_n} &= \phi_{\Omega'}(\frac{\del u}{\del x_1}, \ldots, \frac{\del u}{\del x_{n-1}}),& \quad \text{ on } \del\Omega \cap B_1(0)
\end{aligned}
\]
Let $\ell$ be the tangent plane to $u$ at the origin, $h>0$ and suppose that
\[
S_{h}(u,0) = \{u< \ell+h\} \Subset B_1(0)\cap \Omega.
\]
Then we have a bound
\[
u_{11}|u-\ell-h| \leq C(n,\kappa, \|u_{1}-\ell_{1}\|_{L^{\infty}(S_{h}(0))}). 
\]
\end{prop}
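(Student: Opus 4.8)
Because $S_h(u,0)=\{u<\ell+h\}$ is compactly contained in $\Omega\cap B_1(0)$, this section does not touch the free boundary, so on a neighbourhood of $\overline{S_h(u,0)}$ the equation $g'(\nabla u)\det D^2u=g$ holds with $g,g'$ smooth and positive and $\nabla u$ valued in $\overline{\Omega'\cap B_1(0)}$, where hypotheses (iii)--(iv) are in force. I would first invoke the interior regularity theory for the Monge--Amp\`ere equation \cite{Caffarelli,Caffarelli2,Caffarelli3} to conclude that $u$ is smooth and uniformly convex on $\overline{S_h(u,0)}$, so that the maximum-principle computation below is legitimate. After subtracting the tangent plane at $0$ I may assume $\ell\equiv 0$, so $\ell_i=0$ (using $u(0)=|\nabla u|(0)=0$); set $w:=h-u>0$ on $S_h(u,0)$, vanishing on $\partial S_h(u,0)$, with $w_i=-u_i=-y_i$, $y=\nabla u(x)$. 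Write $L\phi=u^{ij}\phi_{ij}$ and $b:=\|u_1-\ell_1\|_{L^\infty(S_h(0))}=\|u_1\|_{L^\infty(S_h(0))}$.

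The plan is to run a Pogorelov-type argument on the auxiliary function
\[
P:=\log w+\log u_{11}+\tfrac{\beta}{2}u_1^2 ,
\]
with $\beta>0$ a small constant (of size $\sim b^{-2}$) to be chosen. Since $u_{11}$ is bounded below on $\overline{S_h(u,0)}$ while $\log w\to-\infty$ on $\partial S_h(u,0)$, $P$ attains an interior maximum at some $x_0$. At $x_0$ I would compute $LP$ using three facts, each exploiting one of the structural hypotheses: (a) differentiating $\log\det D^2u+\log g'(\nabla u)=\log g$ once in $x_1$ gives $Lu_1=-(\log g')_p u_{p1}=-\partial_{x_1}[\log g'(\nabla u)]$, the term $(\log g)_{x_1}$ dropping out because $g$ is independent of $x_1$ (hypothesis (iii)); (b) differentiating twice in $x_1$, then applying the standard Pogorelov concavity inequality and discarding $-(\log g')_{pq}u_{p1}u_{q1}/u_{11}\ge 0$ (log-concavity of $g'$) gives $L\log u_{11}\ge -(\log g')_p (u_{11})_p/u_{11}$; (c) the algebraic identity $u^{ij}u_{1i}u_{1j}=u_{11}$ gives $L(\tfrac12u_1^2)=u_{11}-u_1\,\partial_{x_1}[\log g'(\nabla u)]$. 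Combined with $L\log w=-n/w-u^{ij}w_iw_j/w^2$, these assemble $LP$.

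The key point, and the reason for the exact form of $P$, is a cancellation at $x_0$. There $\nabla P=0$, i.e. $(u_{11})_i/u_{11}=-w_i/w-\beta u_1 u_{1i}$; substituting this into the term $-(\log g')_p(u_{11})_p/u_{11}$ from (b) produces $(\log g')_p w_p/w+\beta u_1(\log g')_p u_{1p}$, and since $(\log g')_p u_{1p}=\partial_{x_1}[\log g'(\nabla u)]$ the second piece exactly kills the $-u_1\,\partial_{x_1}[\log g'(\nabla u)]$ from (c). What survives of the $g'$-dependence is $(\log g')_p w_p/w=-\big(\textstyle\sum_p(\log g')_p y_p\big)/w\ge -\kappa/w$, using $w_p=-y_p$ and the sub-homogeneity bound (iv). Expanding the remaining $-u^{ij}w_iw_j/w^2$ also via $\nabla P=0$, the inequality $LP(x_0)\le 0$ becomes
\[
0\ \ge\ -\frac{n+\kappa}{w}+\beta u_{11}-\beta^2 u_1^2 u_{11}+\mathcal G ,
\]
where $\mathcal G$ collects the good concavity term $u^{ik}u^{jl}u_{ij1}u_{kl1}/u_{11}$, negative multiples of $u^{ij}(u_{11})_i(u_{11})_j/u_{11}^2$, and a Cauchy--Schwarz cross term.

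The hard part will be the usual Pogorelov bookkeeping: showing $\mathcal G\ge -C(n)\beta^2 u_1^2 u_{11}$. This is the same manipulation as in the interior $C^2$-estimate for the Monge--Amp\`ere equation and in the doubling-trick proof of Jhaveri--Savin \cite[Prop.~4.1]{Jhaveri-Savin} for the one-dimensional cone; I would carry it out by working in coordinates diagonalizing $D^2u(x_0)$ with $e_1$ realizing $\max_{|\xi|=1}u_{\xi\xi}(x_0)$, so that the concavity term dominates the relevant multiple of $u^{ij}(u_{11})_i(u_{11})_j/u_{11}^2$ up to an error absorbed into $\beta^2u_1^2u_{11}$, the cross term being controlled by Cauchy--Schwarz (again via $u^{ij}u_{1i}u_{1j}=u_{11}$). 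Granting this, I would fix $\beta\sim b^{-2}$ small enough that $\beta u_{11}-(1+C(n))\beta^2u_1^2u_{11}\ge\tfrac12\beta u_{11}$ on $\overline{S_h(u,0)}$, whence $\tfrac12\beta(u_{11}w)(x_0)\le n+\kappa$, i.e. $(u_{11}w)(x_0)\le C(n,\kappa)b^2$; then for any $x\in S_h(u,0)$, $P(x)\le P(x_0)$ together with $|u_1(x)|\le b$ yields $\log\big((u_{11}w)(x)\big)\le\log\big((u_{11}w)(x_0)\big)+\tfrac{\beta}{2}b^2\le C(n,\kappa,b)$, which is the asserted estimate since $w=|u-\ell-h|$ on $S_h(u,0)$. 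Beyond the concavity bookkeeping, the other places needing care are justifying the interior smoothness and uniform convexity of $u$ on $\overline{S_h(u,0)}$, and checking that every $g'(\nabla u)$-term beyond the displayed cancellation is bounded using only $\kappa$ and the log-concavity, so that $\|g\|$ and $\|g'\|$ genuinely do not enter the final constant.
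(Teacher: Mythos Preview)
Your proposal has a genuine gap at the very first step: the claim that ``this section does not touch the free boundary'' is false. The condition $S_h(u,0)\Subset B_1(0)\cap\Omega$ only guarantees that the section stays away from $\partial B_1(0)$; here $\Omega\cap B_1(0)=\{x_n\ge\phi_\Omega(x_2,\dots,x_{n-1})\}$ is written with a closed inequality, and the origin itself lies on $\partial\Omega=\{x_n=\phi_\Omega\}$ (since $\phi_\Omega(0)=0$) while also belonging to $S_h(u,0)$ (since $u(0)=0<h$). So $S_h(u,0)\cap\partial\Omega$ is nonempty, and the maximum of your auxiliary function $P$ may well be attained on $\partial\Omega\cap S_h$. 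Your argument treats only the interior case, and the interior regularity you invoke from \cite{Caffarelli,Caffarelli2,Caffarelli3} does not give smoothness up to $\partial\Omega$; boundary $C^{2,\alpha}$ regularity (from \cite{Chen-Liu-Wang}) is needed even to make the computation meaningful there.

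The missing boundary analysis is in fact the essential content of the proposition, and it is precisely where hypotheses (i), (ii) and the boundary condition $u_n=\phi_{\Omega'}(u_1,\dots,u_{n-1})$ are used. The paper's proof shows that if the maximum $x_*$ of $\Phi=\log u_{11}+\log|u|+\tfrac12 u_1^2$ lies on $\partial\Omega$, then after suitable shears one differentiates the boundary condition to compute $u_{1n}(x_*)=0$ and, using convexity of $\phi_{\Omega'}$ together with $\phi_{\Omega'}(0)=0$, obtains $\Phi_n(x_*)\ge 0$. Since $x_*$ is a boundary maximum this forces $\Phi_n(x_*)=0$ and $\Phi_{nn}(x_*)\le 0$, after which the full first- and second-order conditions \eqref{eq: firstDerivTestPhi}--\eqref{eq: secondDerivTestPhi} hold at $x_*$ and the interior computation goes through verbatim. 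None of this appears in your proposal. (As a secondary remark, the interior computation in the paper is simpler than the one you sketch: with $\beta=1$ and after a shear diagonalizing $D^2u(x_*)$, the bookkeeping closes directly to $0\ge (n+\kappa)/u - u_1^2/(u^2u_{11})+u_{11}$ without needing $e_1$ to realize the top eigenvalue or an auxiliary Cauchy--Schwarz step.)
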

\begin{proof}
    By subtracting a linear function from $u$, we may assume that $u(0)= |\nabla u(0)|=0$.  Let $S_h= S_h(u,0)$ be the a section of height $h$ centered at $0$, and replace $u$ with $u-h$.  The argument is a Pogorelov type argument, making essential use of the boundary data.  Consider the quantity
\[
\Phi = \log u_{11} + \log|u| + \frac{1}{2}u_1^2
\]
    Let $x_* \in S_h$ be the point where $\Phi$ achieves its maximum.  Either $x_*$ is an interior point, or $x_*$ lies on $\del\Omega \cap S_h\cap B_1$. 

Suppose first that $x_*$ is an interior point. We may perform a shearing transformation preserving the $x_1$ direction, followed by a rotation in the $x_2,\ldots,x_n$ directions in order to diagonalize $D^2u(x_*)$.  We note that since the $x_1$-direction is preserved, the structural conditions $(i)-(iv)$ are preserved under this affine change of coordinates. Differentiating the equation in the $x_1$-direction and using that $D^2u$ is diagonal at $x_*$  we compute
\begin{equation}\label{eq: LinearizedEquation}
\sum_p \frac{u_{1pp}}{u_{pp}}  + \frac{1}{g'}\frac{\del g'}{\del y_1}u_{11} = 0
\end{equation}
\begin{equation}\label{eq: concavityEquation}
\sum_p \frac{u_{11pp}}{u_{pp}} + \frac{1}{g'}\frac{\del g'}{\del y_p}u_{11p} = \sum_{p,q}\frac{u_{1pq}^2} {u_{pp}u_{qq}}-u_{11}^2\frac{\del^2 \log g'}{\del y_1^2}
\end{equation}

Differentiating $\Phi$ at $x_*$ we have
\begin{equation}\label{eq: firstDerivTestPhi}
\Phi_{p} = \frac{u_{11p}}{u_{11}} + \frac{u_p}{u} + u_{1p}u_1=0
\end{equation}
\begin{equation}\label{eq: secondDerivTestPhi}
 \Phi_{pp} = \frac{1}{u_{11}}u_{11pp}-\frac{(u_{11p})^2}{u_{11}^2} + \frac{u_{pp}}{u} - \frac{u_p^2}{u^2} + u_{1pp}u_1 + u_{1p}^2 \leq0
\end{equation}
Multiplying~\eqref{eq: secondDerivTestPhi} by $u^{pp}= \frac{1}{u_{pp}}$ and summing over $p$ yields
\[
0 \geq \sum_{1\leq p\leq n} \frac{u_{11pp}}{u_{11}u_{pp}}-\sum_{p=1}^{n}\frac{(u_{11p})^2}{u_{11}^2u_{pp}} + \frac{n}{u} - \sum_{p=1}^{n}\frac{u_p^2}{u^2u_{pp}} + u_{1}\sum_{p=1}^{n}\frac{u_{1pp}}{u_{pp}} + u_{11} 
\]
We use~\eqref{eq: firstDerivTestPhi} to replace the fourth term on the right hand side for $p\ne 1$ to get
\[
0 \geq \sum_{1\leq p\leq n} \frac{u_{11pp}}{u_{11}u_{pp}}-\sum_{p=1}^{n}\frac{(u_{11p})^2}{u_{11}^2u_{pp}} + \frac{n}{u} - \sum_{p=2}^{n}\frac{u_{11p}^2}{u_{11}^2u_{pp}}-\frac{u_1^2}{u^2u_{11}} + u_{1}\sum_{p=1}^{n}\frac{u_{1pp}}{u_{pp}} + u_{11}
\]
We now use~\eqref{eq: concavityEquation} to replace the fourth order terms
\begin{equation}\label{eq: PogorelovInequality1}
\begin{aligned}
  0 &\geq \sum_{1 \leq p,q\leq n}\frac{u_{1pq}^2} {u_{pp}u_{qq}u_{11}}-u_{11}\frac{\del^2 \log g'}{\del y_1^2}+\frac{1}{u_{11}}\frac{\del^2\log g}{\del x_1^2} -\sum_{p=1}^{n}\frac{1}{g'u_{11}}\frac{\del g'}{\del y_p}u_{11p} \\
  &\quad -\sum_{p=1}^{n}\frac{(u_{11p})^2}{u_{11}^2u_{pp}} + \frac{n}{u} - \sum_{p=2}^{n}\frac{u_{11p}^2}{u_{11}^2u_{pp}}-\frac{u_1^2}{u^2u_{11}} + u_{1}\sum_{p=1}^{n}\frac{u_{1pp}}{u_{pp}} + u_{11}\\
  &\geq \sum_{2 \leq p,q\leq n}\frac{u_{1pq}^2} {u_{pp}u_{qq}u_{11}}-u_{11}\frac{\del^2 \log g'}{\del y_1^2} -\sum_{p=1}^{n}\frac{1}{g'u_{11}}\frac{\del g'}{\del y_p}u_{11p} \\
  &\quad + \frac{n}{u} -\frac{u_1^2}{u^2u_{11}} + u_{1}\sum_{p=1}^{n}\frac{u_{1pp}}{u_{pp}} + u_{11}
\end{aligned}
\end{equation}
We now use~\eqref{eq: firstDerivTestPhi} to obtain
\[
\begin{aligned}
-\sum_{p=1}^{n}\frac{1}{g'u_{11}}\frac{\del g'}{\del y_p}u_{11p} &= \sum_{p=1}^{n}\frac{1}{g'}\frac{\del g'}{\del y_p}\left(\frac{u_{p}}{u}+u_{1p}u_1\right)\\
&= \frac{1}{g'}\frac{\del g'}{\del y_1}u_{11}u_1 +\sum_{p=1}^{n}\frac{1}{g'}\frac{\del g'}{\del y_p}\frac{u_{p}}{u}
\end{aligned}
\]
On the other hand, by~\eqref{eq: LinearizedEquation} we have
\[
u_1\sum_p \frac{u_{1pp}}{u_{pp}}  + \frac{1}{g'}\frac{\del g'}{\del y_1}u_{11}u_1 = 0
\]
and so, in total
\[
u_1\sum_p \frac{u_{1pp}}{u_{pp}}-\sum_{p=1}^{n}\frac{1}{g'u_{11}}\frac{\del g'}{\del y_p}u_{11p}=\sum_{p=1}^{n}\frac{1}{g'}\frac{\del g'}{\del y_p}\frac{u_{p}}{u}.
\]
Substituting this expression into~\eqref{eq: PogorelovInequality1} yields
\begin{equation}
\begin{aligned}
0 &\geq \sum_{2 \leq p,q\leq n}\frac{u_{1pq}^2} {u_{pp}u_{qq}u_{11}}-u_{11}\frac{\del^2 \log g'}{\del y_1^2}  \\
  &\quad + \frac{n}{u} -\frac{u_1^2}{u^2u_{11}} + u_{11}+\sum_{p=1}^{n}\frac{1}{g'}\frac{\del g'}{\del y_p}\frac{u_{p}}{u}.
  \end{aligned}
\end{equation}
Since $u<0$, the structural property $(iv)$ gives
\[
\frac{\del^2 \log g'}{\del y_1^2} \leq 0 \quad \text{ and } \quad \frac{1}{g'}\sum_p\frac{\del g'}{\del y_p}\frac{u_{p}}{u} \geq \frac{\kappa}{u}.
\]
 Thus, in total, at $x_*$ we have
 \[
 0 \geq \frac{n+\kappa}{u} -\frac{u_1^2}{u^2u_{11}} + u_{11}.
 \]
 Multiplying through the $|u|^2u_{11}$ yields the result.

 We now address the case when $x_* \in \del\Omega \cap S_h \cap B_1(0)$.  First we remark that since $g,g'$ are smooth bounded strictly away from zero, the result of Chen-Liu-Wang \cite{Chen-Liu-Wang}, and the regularity theory of Caffarelli \cite{Caffarelli2, Caffarelli3}, $u$ is smooth up to the boundary 
 
 By obliqueness, we can apply a shearing transformation in the $(x_2,\ldots,x_{n})$ directions which fixes the $x_n$ direction, we may assume that $e_n$ is the inward pointing normal vector to $\Omega$ at $x_*$.  In other words, we have $\frac{\del \phi_{\Omega}}{\del x_j}(x_*)=0$ for all $1 \leq j \leq n-1$.  Similarly, by shearing transformation in the $(y_1,\ldots,y_n)$ variables which fixes the $y_n$ direction we may assume that  $e_n$ is the inward pointing normal vector to $\Omega'$ at $y_*=\nabla u(x_*)$.  In other words, $\frac{\del \phi_{\Omega'}}{\del y_j}(y_*)=0$ for all $1 \leq j \leq n-1$. 
 
 We may write the restriction of  $\Phi$ to $\del\Omega \cap S_h \cap B_1(0)$ as
 \[
 \Phi\big|_{\del \Omega}(x_1,x_2,\ldots,x_{n-1}) = \Phi(x_1,\ldots,x_{n-1},\phi_\Omega(x_2,\ldots,x_{n-1}))
 \]
 Since $\Phi$ is maximized at $x_*$ we have
\begin{equation} \label{eq: boundaryFirstDerivTest}
 0=\frac{\del \Phi}{\del x_j} + \frac{\del \Phi}{\del x_n}\frac{\del \phi_{\Omega}}{\del x_j} = \frac{\del \Phi}{\del x_j} \quad \text{ for }1\leq j\leq n-1.
 \end{equation}
 Differentiating a second time yields
 \begin{equation}\label{eq: boundarySecondDerivTest}
 \begin{aligned}
0&\geq  \frac{\del^2 \Phi}{\del x_j^2} + \frac{\del \Phi}{\del x_n}\frac{\del^2 \phi_{\Omega}}{\del x_j^2} + 2\frac{\del^2 \Phi}{\del x_n\del x_{j}}\frac{\del \phi_{\Omega}}{\del x_j} + \frac{\del ^2\Phi}{\del x_n^2}\left(\frac{\del \phi_{\Omega}}{\del x_j}\right)^2\\
&= \frac{\del^2 \Phi}{\del x_j^2} + \frac{\del \Phi}{\del x_n}\frac{\del^2 \phi_{\Omega}}{\del x_j^2}\bigg|_{x_*}
\end{aligned}
 \end{equation}
We now consider the derivative of $\Phi$ in the $x_n$ direction.  We have
 \[
 \Phi_{n} = \frac{u_{11n}}{u_{11}}+\frac{u_n}{u} + u_1u_{1n}
 \]
 On the other hand, from the boundary data we have
 \[
 u_{n} = \phi_{\Omega'}(u_1,\ldots,u_{n-1})
 \]
 and so
 \[
 \begin{aligned}
 u_{1n} &= \sum_{p=1}^{n-1}\frac{\del \phi_{\Omega'}}{\del y_p}u_{1p} \\
  u_{1n}(x_*) &= \sum_{p=1}^{n-1}\frac{\del \phi_{\Omega'}}{\del y_p}(y_*)u_{1p}(x_*) = 0.
  \end{aligned}
 \]
 Differentiating again yields
 \[
 u_{11n} = \sum_{1 \leq p,q \leq n-1}\frac{\del^2 \phi_{\Omega'}}{\del y_p\del y_q}u_{1p}u_{1q} + \sum_{p=1}^{n-1}\frac{\del \phi_{\Omega'}}{\del y_p}u_{11p}.
 \]
By~\eqref{eq: boundaryFirstDerivTest} we have
\[
\Phi_p = \frac{u_{11p}}{u_{11}} + \frac{u_p}{u} + u_{1p}u_1=0 \quad \text{ for } 1 \leq p \leq n-1
\]
and so
\[
\frac{u_{11n}}{u_{11}} = \sum_{1 \leq p,q \leq n-1}\frac{\del^2 \phi_{\Omega'}}{\del y_p\del y_q}\frac{u_{1p}u_{1q}}{u_{11}} - \sum_{p=1}^{n-1}\frac{\del \phi_{\Omega'}}{\del y_p}\left(\frac{u_p}{u} + u_{1p}u_1\right).
\]
This gives
\[
\begin{aligned}
\Phi_n &= \sum_{1 \leq p,q \leq n-1}\frac{\del^2 \phi_{\Omega'}}{\del y_p\del y_q}\frac{u_{1p}u_{1q}}{u_{11}} - \sum_{p=1}^{n-1}\frac{\del \phi_{\Omega'}}{\del y_p}\left(\frac{u_p}{u} + u_{1p}u_1\right) + \frac{\phi_{\Omega'}}{u} + u_1 \sum_{p=1}^{n-1}\frac{\del \phi_{\Omega'}}{\del y_p}u_{1p}\\
&= \sum_{1 \leq p,q \leq n-1}\frac{\del^2 \phi_{\Omega'}}{\del y_p\del y_q}\frac{u_{1p}u_{1q}}{u_{11}} +\frac{1}{|u|} \left(\sum_{p=1}^{n-1}\frac{\del \phi_{\Omega'}}{\del y_p}u_p - \phi_{\Omega'}\right).
\end{aligned}
\]
By the convexity of $\phi_{\Omega}$, the first term is non-negative. By the convexity of $\phi_{\Omega'}$ and $\phi_{\Omega'}(0) = 0$, the second term is non-negative as well. Hence we conclude that
\[
\Phi_{n}(x_*) \geq 0. 
\]
In particular, we must have
\begin{equation}\label{eq: normalDerivBoundaryPog}
\Phi_{n}(x_*)=0 \quad \text{ and } \quad  \Phi_{nn}(x_*) \leq 0.
\end{equation}
Substituting this into~\eqref{eq: boundarySecondDerivTest} we obtain
\begin{equation}\label{eq: nonNormalDerivBoundaryPog}
\frac{\del \Phi}{\del x_j}(x_*)=0 \quad \text{ and } \quad \frac{\del^2 \Phi}{\del x_j^2}(x_*) \leq 0. 
\end{equation}
Finally, since $u_{1n}(x_*)=0$, we may perform a shearing transformation preserving the $x_1$ and $x_n$ directions so that $D^2u(x_*)$ is diagonal.  By structural condition $(i)$, $\del \Omega'$ is invariant under translations in the $x_1$ direction and so equations~\eqref{eq: LinearizedEquation},~\eqref{eq: concavityEquation} hold up to the boundary.  Furthermore, by~\eqref{eq: normalDerivBoundaryPog} and ~\eqref{eq: nonNormalDerivBoundaryPog} we conclude that ~\eqref{eq: firstDerivTestPhi} and~\eqref{eq: secondDerivTestPhi} also hold at $x_*$.  We may therefore treat $x_*$ as if it were an interior point, and the previous argument leads to the desired estimate.
\end{proof}

\begin{remark}
    In Proposition~\ref{prop: VeryGenPog}, the non-degeneracy of the measures $g(x)dx$ and $g'(y)dy$, as well as the smoothness of the boundaries $\Omega,\Omega'$ can typically be relaxed by an approximation argument; see for example \cite[Proposition 4.1]{Jhaveri-Savin}.  We given an example of such an argument in the proof of Theorem~\ref{thm: secMixedHomog} below.
\end{remark}

\subsection{Shape of sections in mixed homogeneity case}
We now prove the main result of this section, which gives the shape of sections when the domains has flat side. As we can see, the flat sides breaks the homogeneity, and the solution exhibits mixed homogeneous behaviour. 

\begin{theorem}\label{thm: secMixedHomog}
    Suppose that near the origin $\Omega$ is locally equal to $\mathbb R^{n-m}\times \mathtt C$ and $\Omega'$ is locally equal to $\mathbb R^{n-1}\times\mathbb R_+$, and $g(x) =1, g'(y) = y_n^k$. Then there exist $C>1$ such that for $h\ll1$, we have
    \[B_{C^{-1}h^{\frac 1 2}}^{n-m}\times B_{C^{-1}h^{\frac{1}{1+\frac{m}{m+k}}}}^m\subset S_h^c(u, 0)\subset B_{Ch^{\frac 1 2}}^{n-m}\times B_{Ch^{\frac{1}{1+\frac{m}{m+k}}}}^m,\]
    and
    \[B_{C^{-1}h^{\frac 1 2}}^{n-m}\times B_{C^{-1}h^{\frac{1}{1+\frac{m+k}{m}}}}^m\subset S_h^c(v, 0)\subset B_{Ch^{\frac 1 2}}^{n-m}\times B_{Ch^{\frac{1}{1+\frac{m+k}{m}}}}^m.\]
\end{theorem}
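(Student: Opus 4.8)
The plan is to separate the flat coordinate directions $x_1,\dots,x_{n-m}$ from the cone directions $x_{n-m+1},\dots,x_n$, handling the former with the Pogorelov estimate of Proposition~\ref{prop: VeryGenPog} and the latter with the dimension‑$m$ case of Theorem~\ref{thm: round-degenerate-density}, glued together by a product‑structure statement for the centered sections. Write $x=(x'',x')$ with $x''\in\mathbb R^{n-m}$, $x'\in\mathbb R^m$, and likewise $y=(y'',y')$; under $\nabla u$ the flat slab corresponds to $\mathbb R^{n-m}\times\{0\}$ in the target, $\mathtt C$ occupies the $x'$-coordinates, and its height direction $x_n$ corresponds to the degenerate direction $y_n$. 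After rescaling so that $S^c_{h_0}(u,0)\Subset B_1(0)$ for a fixed $h_0$, and after the standard approximation by smooth densities and smoothly bounded domains (as in the proof of Theorem~\ref{thm: monotonicity}, cf.\ the remark after Proposition~\ref{prop: VeryGenPog}), I would argue in three steps.

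\textbf{Step 1 (flat directions).} Apply Proposition~\ref{prop: VeryGenPog} in each flat coordinate direction $x_j$, $1\le j\le n-m$, both to $u$ and to $v$: for $u$ the structural hypotheses hold with source density $g\equiv1$ (independent of $x''$, log-concave, $\kappa=0$) and target density $g'=y_n^k$ (log-concave, $\sum_p y_p\partial_{y_p}g'=kg'$), the domain $\mathbb R^{n-m}\times\mathtt C$ being graphical over $(x_1,\dots,x_{n-1})$ with defining function independent of $x_1,\dots,x_{n-m}$ and $\Omega'=\mathbb R^{n-1}\times\mathbb R_+$ graphical with vanishing defining function; for $v$ the roles of the two sides are exchanged. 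This yields $u_{jj}\le C$ on $S^c_{h_0/2}(u,0)$ and $v_{jj}\le C$ on $S^c_{h_0/2}(v,0)$ for $1\le j\le n-m$. Since $u_{jj}\le C$ forces $S^c_h(u,0)$ to contain a segment of length $\gtrsim h^{1/2}$ along $e_j$ and the same holds for $v$, and since for a convex body $K$ with barycenter at the origin the widths of $K$ and of $hK^{\circ}$ in a fixed direction multiply to $\sim h$, Proposition~\ref{prop: properties-of-sections}(1) (which makes $S^c_h(v,0)$ comparable to $h\,(S^c_h(u,0))^{\circ}$) upgrades these to two‑sided bounds: $\mathrm{width}(S^c_h(u,0),e_j)\sim h^{1/2}$ and $\mathrm{width}(S^c_h(v,0),e_j)\sim h^{1/2}$ for $1\le j\le n-m$ and all small $h$.

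\textbf{Step 2 (product structure).} Using $u_{jj}\le C$ together with $\|\partial_{x_j}(u-\ell)\|_{L^\infty(S^c_h(u,0))}\lesssim h^{1/2}$ for $j\le n-m$ (which follows from Step 1 and Proposition~\ref{prop: properties-of-sections}(1)), sliding a point of $S^c_h(u,0)$ along the flat coordinate axes one at a time raises the section height by only a bounded factor; hence for every $x'_0$ in the projection $\hat S_h:=\mathrm{proj}_{x'}S^c_h(u,0)\subset\mathbb R^m$, the set $B^{n-m}_{ch^{1/2}}\times\{x'_0\}$ lies in $S^c_{Ch}(u,0)$. Combined with the obvious inclusion $S^c_h(u,0)\subset B^{n-m}_{Ch^{1/2}}\times\hat S_h$ and the doubling comparison $S^c_{Ch}(u,0)\sim S^c_h(u,0)$, this gives $S^c_h(u,0)\sim B^{n-m}_{h^{1/2}}\times\hat S_h$, and symmetrically $S^c_h(v,0)\sim B^{n-m}_{h^{1/2}}\times\hat T_h$ with $\hat T_h=\mathrm{proj}_{y'}S^c_h(v,0)$; Proposition~\ref{prop: properties-of-sections}(1) moreover gives $\hat S_h\sim h\,\hat T_h^{\circ}$.

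\textbf{Step 3 (cone directions).} It remains to show $\hat S_h\sim B^m_{h^{(m+k)/(2m+k)}}$ and $\hat T_h\sim B^m_{h^{m/(2m+k)}}$, which one obtains by rerunning the proof of Theorem~\ref{thm: round-degenerate-density} on the cone parts: $\hat T_h$ has a uniform density lower bound in $\mathbb R^{m-1}\times\mathbb R_+$ (Gr\"unbaum's inequality, its barycenter lying on $\{y_n=0\}$); because $\mathtt C$ is strict, its $y_n$-height $d(h)$ is controlled by its base semi-axes $l_1(h)\le\dots\le l_{m-1}(h)$; the Monge--Amp\`ere equation together with the product structure of Step 2 (so that the $h^{(n-m)/2}$ factors from the flat directions cancel on both sides) gives $(l_1\cdots l_{m-1})^2 d(h)^{k+2}\sim h^{m}$; and the dimension‑$m$ monotonicity lower bound $\int_{\hat T_h\cap\{y_n\ge0\}}y_n^k\,dy'\gtrsim h^{m(m+k)/(2m+k)}$ then forces $d(h)\sim h^{m/(2m+k)}$ and hence $l_i(h)\sim h^{m/(2m+k)}$. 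Equivalently, one identifies the restriction $x'\mapsto u(0,x')$, minus its tangent plane at $0$, with the dimension‑$m$ optimal transport map from $(\mathtt C,dx')$ to $(\mathbb R^{m-1}\times\mathbb R_+,\,y_n^k dy')$ and quotes Theorem~\ref{thm: round-degenerate-density} with $l=0$, $\alpha=\frac{m}{m+k}$. Combining with Step 2 yields the asserted product‑box inclusions for $S^c_h(u,0)$ and $S^c_h(v,0)$. The main obstacle is precisely obtaining this \emph{sharp} monotonicity input in the cone directions: the ambient $n$-dimensional monotonicity formula (Theorem~\ref{thm: monotonicity}) applied to $u$ or $v$ only yields $d(h)\gtrsim h^{n/(2n+k)}$, which is strictly weaker than the required $h^{m/(2m+k)}$ once $m<n$, so one genuinely needs the dimension‑$m$ formula — i.e.\ one must show that the cone‑part dynamics is driven by an honest $m$-dimensional transport problem even though the hypotheses on $\Omega,\Omega'$ are only local, which (along with checking that the Pogorelov argument survives the present degenerate, possibly non‑smooth, setting) is where the real work lies.
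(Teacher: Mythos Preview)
Your Steps~1 and~2 align with the paper: the flat-direction widths $\sim h^{1/2}$ are obtained via the Pogorelov estimate of Proposition~\ref{prop: VeryGenPog} (applied to $u$ and to $v$, with an approximation argument), and the section is then comparable to a product of an $(n-m)$-dimensional ball of radius $h^{1/2}$ with an $m$-dimensional body whose shape must be pinned down.

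The genuine gap is in Step~3, and you correctly flag it yourself: neither of your two proposed routes works. The restriction $x'\mapsto u(0,x')$ is \emph{not} an $m$-dimensional optimal transport map --- the $n$-dimensional Monge--Amp\`ere equation $(\partial_nu)^k\det D^2u=1$ does not restrict to $(\partial_nu)^k\det D^2_{x'}u=1$ on a slice unless $u$ already splits as a sum, which is exactly what you are trying to prove. Likewise there is no ``dimension-$m$ monotonicity formula'' available for $u$ or $v$ themselves; the only monotonicity you have is the ambient one, which gives the insufficient exponent $n/(2n+k)$.

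The paper resolves this by a completely different mechanism: an \emph{induction on $m$}, with the base case $m=1$ taken from Jhaveri--Savin. For the inductive step one argues by contradiction. Assume the sharp volume lower bound $|S_h(u,0)|\gtrsim h^{\gamma}$ (with $\gamma=\frac{n-m}{2}+\frac{m}{1+m/(m+k)}$) fails along a sequence of scales. One shows (Claim~\ref{claim: noCollapsing} in the paper) that this forces $d(h)/l_{n-1}(h)\to0$ along that sequence, so the normalized blow-up limit lives on $\mathbb R^{n-m+j}\times\tilde{\mathtt C}$ for some $j\geq1$ and some strict cone $\tilde{\mathtt C}\subset\mathbb R^{m-j}$ --- i.e.\ the cone has acquired \emph{additional} flat directions. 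The inductive hypothesis applied to this limit gives section volumes $\sim h^{\beta}$ with $\beta<\gamma$, and a short scale-comparison argument (choose $\Lambda$ with $\Lambda^{\gamma-\beta}$ large and compare $|S_{r_p/\Lambda}|$ to $|S_{r_p}|$) then contradicts the assumed smallness at all nearby scales. Thus the lower bound is obtained without ever producing an honest $m$-dimensional transport map; the induction trades the missing $m$-dimensional monotonicity for control of a lower-dimensional blow-up.
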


\begin{proof}

Let $S_h^c(v, 0)\sim E(l_1, l_2, \ldots, l_{n-1}, d(h))$ and $S_h^c(u, 0)\sim E(\frac{h}{l_1}, \frac{h}{l_2}, \ldots, \frac{h}{l_{n-1}}, \frac{h}{d(h)})$ and assume without loss of generality that
\[l_1\leq l_2\leq\cdots\leq l_{n-1}.\]  First we claim that by the Pogorelov estimate (Proposition~\ref{prop: VeryGenPog}) we have 
\begin{equation}\label{eq: secShapeFlatDirections}
l_1\sim l_{n-m}\sim h^{\frac{1}{2}}. 
\end{equation}
We first prove that $\ell_{n-m} \lesssim h^{\frac{1}{2}}$ using Proposition~\ref{prop: VeryGenPog} and an approximation argument.  By the duality of sections, and rotational symmetry in the $(x_1,\ldots, x_{n-m})$ variables, it suffices to prove the bound
\[
\sup_{S_{h_0}(u,0)} u_{11} \leq C
\]
Suppose that $u$ satisfies the assumptions of the theorem, and let $v=u^*$ be the Legendre dual of $u$.  Fix some $h_0$ and some $\delta >0$ so that $S_{h_0}(u,0) \subset B_{1-2\delta}$.
Choose a sequence $\mathtt{C}_j \subset \mathbb{R}^{m}$ of smooth convex sets approximating $\mathtt{C}$ in $B_1(0)$, and choose also a sequence of sets $\Upsilon_j\subset \{y \in \mathbb{R}^n: y_n \geq 0\}$  which are dilations of  $\Upsilon:= \nabla u((\mathbb{R}^k\times \mathtt{C})\cap B_1(0))$, such that for $j$ sufficiently large
\begin{itemize}
\item we have the containment
\[
\Upsilon_{\delta}:=\nabla u(((\mathbb{R}^{n-m}\times \mathtt{C}) \cap B_{1-\delta}(0)) \Subset \Upsilon_{j}. 
\]

\item the mass balancing condition holds:
\[
\int_{(\mathbb{R}^{n-m}\times \mathtt{C}_j)\cap B_1(0)} dx = \int_{\Upsilon_j} (y_n+\frac{1}{j})^{k} dy
\]
\end{itemize}
Let $v_j$ be convex function solving the optimal transport problem
\begin{equation}\label{eq: PogOTEqnApprox}
    \begin{aligned}
    \det D^2v_j&=(y_n+j^{-1})^{k}, \quad \text{ in  }\, \Upsilon_j \\
    \nabla v_j(\Upsilon_j) &= (\mathbb{R}^{n-m}\times \mathtt{C}_j)\cap B_1(0)
    \end{aligned}
    \end{equation}

By the regularity theory for optimal transport maps \cite{Caffarelli, Caffarelli2, Chen-Liu-Wang, Jhaveri-Savin} we have that, for any $j$, $v_j \in C^{2,\alpha}(\overline{\Upsilon}_{\delta})$ and the $v_j$ are uniformly bounded in $C^{1,\alpha}(\overline{\Upsilon}_{\delta})$, and hence $v_j$ converge to $v$ in $C^{1,\alpha}(\overline{\Upsilon}_{\delta})$.  In particular, if $u_j$ denotes the Legendre transform of $v_j$, then for $j$ large $u_j$ satisfies 
\begin{equation}\label{eq: PogOTEqn}
    \begin{aligned}
    (\frac{\del u_j}{\del x_n}+j^{-1})^{k}\det D^2u&=1, \quad \text{ in  }\, (\mathbb{R}^{n-m} \times \mathtt C_j)\cap S_{h_0}(u_j,0) \\
    \frac{\del u_{j}}{\del x_n} &\geq 0, \quad \text{ in  }\, (\mathbb{R}^{n-m} \times \mathtt C_j)\cap S_{h_0}(u_j,0)\\
    \frac{\del u_{j}}{\del x_n}&=0,\quad  \text{ on } (\mathbb{R}^{n-m} \times \del \mathtt C_j) \cap S_{h_0}(u_j,0)
    \end{aligned}
    \end{equation}
We may therefore apply Proposition~\ref{prop: VeryGenPog} uniformly to $u_j$ with $\kappa =k$, and the result follows by taking the limit as $j\rightarrow \infty$.

Next we prove the reverse estimate $\ell_1 \gtrsim h^{\frac{1}{2}}$, which will follow from the estimate 
\[
\sup_{S_{h_0}(v,0)} v_{11} \leq C
\]
for some $C, h_0$. Since the approximation argument is similar to the preceding argument, we only sketch the details. 
We choose a sequence of smooth convex functions $\phi_{\mathtt{C}_m}$ such that $ \phi_{\mathtt{C}_m}(0)= 0$, and $\phi_{\mathtt{C}_m}$ converge to $\phi_{\mathtt{C}}$ uniformly on compact sets.  Let
    \[
    \mathtt{C}_m = \{ x_n \geq \phi_{\mathtt{C}_m}(x_1,\ldots, x_{n-1})\}
    \]
    We now solve a sequence of optimal transport problems with smooth non-degenerate measures given by $dx$ and $(y_n+\epsilon)^kdy$, and targets contained in $\mathtt{C}_m$.  The result follows by applying the preceding Pogorelov interior estimate, and passing to the limit as $m\rightarrow \infty$.

We have thus established~\eqref{eq: secShapeFlatDirections}. By similar arguments to those in the proof of Theorem~\ref{thm: round-degenerate-density} we will show that
\[
d(h)\lesssim l_{n-m+1}\leq \cdots \leq l_{n-1}. \]
Indeed, let $\{e_1,\ldots, e_n\}$ be the standard basis of $\mathbb{R}^n$ and let $S_{h}^{c,i}(v,0) = S_{h}^{c}(v,0) \cap {\rm Span}\{e_i,e_n\}$.  Define
\[
\begin{aligned}
-\ell_i(h) &= \min \{x_i : (x_i,0)\in S_{h}^{c,i}(v,0)\}\\
r_i(h) &=\max \{x_i : (x_i,0)\in S_{h}^{c,i}(v,0)\}
\end{aligned}
\]
and note that $\ell_i(h) \sim r_i(h)$ by balancing. Since $\mathtt{C}$ is a strict cone, we have
\[
\phi_{\mathtt{C}}(x_{n-m+1}, \ldots, x_{n-1}) \geq c \max_{n-m+1\leq i \leq n-1} |x_i|
\]
for some uniform $c>0$ depending only on $\mathtt{C}$.  For any $n-m+1 \leq i \leq n-1$, the boundary values yield
\[
\begin{aligned}
\frac{\del v}{\del y_n}(-\ell_i(h)e_i) &= \phi_{\mathtt{C}}(\frac{\del v}{\del y_{n-m+1}}(-\ell_i(h)e_i), \ldots, \frac{\del v}{\del y_{n-1}}(-\ell_i(h)e_i)) \\
&\geq c|\frac{\del v}{\del y_i}(-\ell_i(h)e_i)|\\
& \geq c\frac{\ell_i(h)}{h}
\end{aligned}
\]
where we used convexity of $v$ in the last line.  From now on we consider the two dimensional subspace $\mathbb{R}^{2} = {\rm Span}\{e_i, e_n\} \subset \mathbb{R}^{n}$ for $n-m+1 \leq i \leq n$.  Let
\[
\vec{\eta} = \left(\frac{\del v}{\del y_i}(-\ell_i(h)e_i),\frac{\del v}{\del y_n}(-\ell_i(h)e_i)\right).
\]
Then we have shown that $\vec{\eta}\cdot e_n >c |\vec{\eta} \cdot e_i|$, and the line
\[
(y_i+\ell_i(h), y_n)\cdot \eta =0
\]
is a supporting line for $S_{h}^{c,i}(v,0)$,intersecting the line $\{r_i(h)e_i+te_n: t\in \mathbb{R}\}$ at a point $r_i(h)e_i+t_*e_n$ where $t_* \sim \ell_i(h)$.  Therefore,
\[
d_h \lesssim \min_{n-m+1 \leq i \leq n-1}\{\ell_i(h), r_i(h)\} \lesssim l_{n-m+1}
\]
as desired.

From the mass balancing condition imposed by the equation, we have 
\[\frac{h^n}{l_1\dots l_{n-1}d(h)}\sim |S_h(u, 0)|\sim \int_{S_h(v, 0)}y_n^kdy \sim l_1\cdots l_{n-1}d(h)^{k+1}\]
which implies
\[h^m\sim (l_{n-m+1}\cdots l_{n-1})^2d(h)^{k+2} \gtrsim d(h)^{k+2+2(m-1)} \]
and so
\[
|S_{h}(u,0)| \lesssim h^{\frac{n-m}{2}+\frac{m}{1+\frac{m}{m+k}}}.
\]
We claim that a comparable lower volume bound holds 
\begin{lem}\label{lem: secVolLowBndMixed}
In the above setting, there is a constant $c>0$ so that
\begin{equation}\label{eq: SecVolMixHomog}
|S_h(u, 0)|\geq c h^{\frac{n-m}{2}+\frac{m}{1+\frac{m}{m+k}}}
\end{equation}
for all $h \leq 1$.
\end{lem}

\noindent Assuming Lemma~\ref{lem: secVolLowBndMixed}, the shape of the sections follows as in Theorem~\ref{thm: round-degenerate-density}.
\end{proof}

It only remains to prove Lemma~\ref{lem: secVolLowBndMixed}.

\begin{proof}[Proof of Lemma~\ref{lem: secVolLowBndMixed}]
We argue by induction on $m$.  First note that it suffices to prove~\eqref{eq: SecVolMixHomog} for all $h$ sufficiently small, up to redefining $c$.  When $m=1$, the result follows from the work of Jhaveri-Savin \cite[Theorem 1.4]{Jhaveri-Savin}. Suppose that~\eqref{eq: SecVolMixHomog} is not true.  Set
\[
\gamma = \frac{n-m}{2}+\frac{m}{1+\frac{m}{m+k}}
\]
For $\epsilon>0$ define the set of ``$\epsilon$-bad scales" to be
\[
\mathfrak{B}(\epsilon) = \left\{h \in \mathbb{R}_+ : 0<h \leq 1 \, \text{ and } \,\epsilon h^{\gamma} \leq |S_{h}(u,0)| \leq 2\epsilon h^{\gamma}\right\}
\]
and let
\[
\widehat{\mathfrak{B}}(\epsilon) := \bigcup_{\tau \leq \epsilon} \mathfrak{B}(\tau)= \left\{h \in \mathbb{R}_+ : 0<h \leq 1 \, \text{ and } \, |S_{h}(u,0)| \leq 2\epsilon h^{\gamma}\right\}
\]
Suppose that, for some $\epsilon >0$ we have $\inf \widehat{\mathfrak{B}}(\epsilon) = \delta >0$.  Then, for all $h<\delta$ we have
\[
|S_{h}(u,0)| \geq 2\epsilon h^{\gamma}
\]
which is the desired result.  Thus we may assume that there is a sequence $h_i \searrow 0$, and a sequence $\epsilon_i\searrow 0$ such that $h_i \in \mathfrak{B}(\epsilon_i)$.  We will show that this scenario leads to a contradiction.  The main technical tool is the following claim:

\begin{claim}\label{claim: noCollapsing}
Fix a constant $K\geq 1$.  There exists $h_0, \epsilon_0$, and $\lambda_0 \in (0,1)$ such that, if $h < h_0$, and $\epsilon < \epsilon_0$, and $\epsilon h^{\gamma} \leq |S_h(u,0)| \leq 2\epsilon h^{\gamma}$, then for some $h' \in (\lambda_0 h, h)$ we have $|S_{h'}(u,0)| \geq 2K\epsilon (h')^{\gamma}$.
\end{claim}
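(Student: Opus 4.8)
The plan is to prove the Claim \emph{within} the induction on $m$ used for Lemma~\ref{lem: secVolLowBndMixed}, so that we may assume Lemma~\ref{lem: secVolLowBndMixed} (equivalently, the bound $|S_h(u,0)|\gtrsim h^{\gamma}$) in all conical dimensions strictly less than $m$; the case $m=1$ is \cite[Theorem 1.4]{Jhaveri-Savin}. Fix $K\ge 1$ and suppose the Claim fails. Applying the negation with $h_0=\epsilon_0=\lambda_0=1/j$ produces scales $h_j\searrow 0$, parameters $\epsilon_j\searrow 0$, and $\lambda_j=1/j\searrow 0$ with $\epsilon_j h_j^{\gamma}\le |S_{h_j}(u,0)|\le 2\epsilon_j h_j^{\gamma}$ and $|S_{h'}(u,0)|<2K\epsilon_j (h')^{\gamma}$ for all $h'\in(\lambda_j h_j,h_j)$. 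The idea is to blow $u$ up at the scales $h_j$, show that the collapse forces the blow-up to be an optimal transport map over a cone with \emph{strictly fewer} conical directions, apply the inductive hypothesis there, and contradict the non-recovery bound.

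First I would set up the blow-up. Let $A_j$ normalize $S^c_{h_j}(u,0)$. By the Pogorelov estimate (Proposition~\ref{prop: VeryGenPog}), used exactly as in the derivation of~\eqref{eq: secShapeFlatDirections}, the $n-m$ flat semi-axes of $S^c_{h_j}(u,0)$ are comparable to $h_j^{1/2}$, so $A_j$ behaves like $h_j^{-1/2}\,\mathrm{Id}$ on the $\mathbb R^{n-m}$ factor and the rescaled domains stay of the form $\mathbb R^{n-m}\times(\text{rescaled cone})$. Since $y_n^k$ is a doubling weight, the $C^{1,\delta}$ estimate (Proposition~\ref{prop: C1-delta-estimate}) applies uniformly to the rescalings, and a subsequence converges to a blow-up $((\mathbb R^{n-m}\times\mathtt C_\infty,dx),(\mathbb R^{n-1}\times\mathbb R_+,y_n^k\,dy),u_\infty,v_\infty)$ with $0$ on the boundary of its domain.

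The crucial step is to identify $\mathtt C_\infty$. Writing $S^c_h(v,0)\sim E(l_1,\dots,l_{n-1},d(h))$ with $l_1\le\cdots\le l_{n-1}$, the shape relations proved in Theorem~\ref{thm: secMixedHomog} give $l_1\sim\cdots\sim l_{n-m}\sim h^{1/2}$, $d(h)\lesssim l_{n-m+1}$, the mass-balancing identity $(l_{n-m+1}\cdots l_{n-1})^2 d(h)^{k+2}\sim h^m$, and hence $|S_h(u,0)|\sim h^{n/2}d(h)^{k/2}$. The band condition then forces $d(h_j)\sim \epsilon_j^{2/k}h_j^{m/(2m+k)}$, which is much smaller than the ``round'' value $h_j^{m/(2m+k)}$, so at least one transverse semi-axis satisfies $l_i\gg d(h_j)$. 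Consequently $A_j$ compresses the $e_n$-direction of the cone relative to those directions; passing to a further subsequence and extracting the lineality space, $\mathtt C_\infty=L\times\mathtt C'$ with $L$ a subspace of dimension $m-m'\ge 1$ and $\mathtt C'\subset L^{\perp}\cong\mathbb R^{m'}$ a strict cone (a half-line when $m'=1$), with $1\le m'<m$. Thus $u_\infty$ is an instance of the setting of Lemma~\ref{lem: secVolLowBndMixed} in conical dimension $m'<m$. I expect this step — tracking precisely which semi-axes collapse, possibly only along subsequences, and checking that the limit really is a lower-dimensional strict cone times a Euclidean factor rather than something more degenerate — to be the main technical obstacle.

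Finally I would close the argument. By the inductive hypothesis (the case $m'=1$ being Jhaveri-Savin), $|S_h(u_\infty,0)|\ge c\,h^{\gamma'}$ for all $h\le 1$, where $\gamma'=\frac{n-m'}{2}+\frac{m'(m'+k)}{2m'+k}$; since $\frac{d}{dm}\gamma_m=\frac{k^2}{2(2m+k)^2}>0$ for $k>0$ (the $k=0$ case is classical), we have $\gamma'<\gamma$. On the other hand $\det A_j\sim|S_{h_j}(u,0)|^{-1}\sim(\epsilon_j h_j^{\gamma})^{-1}$ (using $(\det A_j)|S^c_{h_j}(u,0)|\sim 1$ together with the uniform density bound), so the non-recovery bound rescales to $|S_{h''}(u_j,0)|\le 2K(h'')^{\gamma}$ for $h''\in(\lambda_j,1)$, where $u_j$ is the $j$-th rescaling of $u$. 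Letting $j\to\infty$ and using strict convexity of $u_\infty$ to pass section volumes to the limit gives $|S_{h''}(u_\infty,0)|\le 2K(h'')^{\gamma}$ for every $h''\in(0,1)$. Combining the two bounds yields $c(h'')^{\gamma'}\le 2K(h'')^{\gamma}$ for all small $h''$, which is impossible as $h''\to 0$ since $\gamma'<\gamma$; this contradiction proves the Claim.
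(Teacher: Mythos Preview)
Your proposal is correct and follows essentially the same strategy as the paper: argue by contradiction, blow up along the bad scales, show the collapse forces the limit to live on $\mathbb R^{n-m'}\times\mathtt C'$ with $m'<m$, invoke the inductive hypothesis to get the exponent $\gamma'<\gamma$, and contradict the non-recovery bound. Two minor differences worth noting: the paper rescales by the explicit diagonal matrix $\mathrm{diag}(h_j/l_1,\dots,h_j/l_{n-1},h_j/d)$ rather than the John normalization $A_j$, which keeps the product structure $\mathbb R^{n-m}\times(\text{cone})$ and the density $y_n^k$ manifestly preserved and avoids the issue you flag about tracking which semi-axes collapse; and in the final step the paper fixes a single large ratio $\Lambda$ with $\tfrac{c}{2C}\Lambda^{\gamma-\beta}>4K$ and contradicts the non-recovery bound directly at scale $r_p/\Lambda$ for large $p$, rather than first passing the inequality to the limit $u_\infty$.
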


We now prove~\eqref{eq: SecVolMixHomog} and hence Lemma~\ref{lem: secVolLowBndMixed}, assuming Claim~\ref{claim: noCollapsing}.  First, observe that by convexity we have the containment
\[
\frac{1}{2}S_{h}(u,0) \subset S_{\frac{h}{2}}(u,0)
\]
and hence, if $h \in \mathfrak{B}(\epsilon)$
\[
|S_{\frac{h}{2}}(u,0)| \geq 2^{-n}|S_{h}| \geq 2^{-n+\gamma} \epsilon \left(\frac{h}{2}\right)^{\gamma}
\]
In particular, we have
\begin{equation}\label{eq: halfScaleControlDegen}
h \in \mathfrak{B}(\epsilon) \Rightarrow \frac{h}{2} \notin \widehat{\mathfrak{B}}(2^{-n+\gamma} \epsilon).
\end{equation}
We apply the claim with $K = 2^{n-\gamma}>1$.  Let $\epsilon_0, h_0, \lambda_0$ be as in the claim.  Let 
\[
h_1 = \inf \{ h< h_0 : h \notin \widehat{\mathfrak{B}} (\epsilon_0)\}.
\]
By~\eqref{eq: halfScaleControlDegen} we have $h_1 \in \mathfrak{B}(\epsilon_1)$ for some $\epsilon_1 \in (K^{-1} \epsilon_0, \epsilon_0)$.  By Claim~\ref{claim: noCollapsing} we conclude that there is an $h_2\in (\lambda_0h_1,h_1)$ such that $h_2 \notin \widehat{\mathfrak{B}}(K\epsilon_1)$.  In particular, we obtain
\[
h_2 \in \mathfrak{B}(\epsilon_2) \quad \text{ for } \epsilon_2 >K\epsilon_1 > \epsilon_0
\]
Thus, for every $h\in(h_2,h_1) \subset (\lambda_0h_1,h_1)$ we have
\[
|S_{h}(u,0)| \geq |S_{h_2}(u,0)| \geq \epsilon_0h_2^{\gamma} \geq \epsilon_0\lambda_0^{\gamma}h_1^{\gamma} \geq \epsilon_0\lambda_0^{\gamma}h^{\gamma}
\]
We can now repeat the argument, taking 
\[
h_3 = \inf \{ h< h_2 : h \notin \widehat{\mathfrak{B}} (\epsilon_0)\},
\]
and proceeding as before.  It follows that
\[
|S_{h}(u,0)| \geq \epsilon_0\lambda_0^{\gamma}h^{\gamma} \quad \text{ for all } h < h_0
\]
which contradicts our assumption regarding the existence of the sequence $(h_i, \epsilon_i)$.
\end{proof}

In order to complete the proof of Lemma~\ref{lem: secVolLowBndMixed} (and thereby Theorem~\ref{thm: secMixedHomog}), it only remains to prove Claim~\ref{claim: noCollapsing}.  

\begin{proof}[Proof of Claim~\ref{claim: noCollapsing}]
The proof is by contradiction. If the claim is not true, then there exist $h_p, \epsilon_p, r_p, \delta_p, \mu_{p} \in (0,1)$ such that $h_p \searrow 0$, $\epsilon_p \searrow 0$, $\mu_p \searrow 0$ as $p\to \infty$, and
\[
r_p<h_{p},  \quad \delta_p <\epsilon_{p}, \quad r_p \in \mathfrak{B}(\delta_p)
\]
and such that, for all $h' \in(\mu_pr_p, r_{p})$ we have $h' \in \widehat{\mathfrak{B}}(K\delta_p)$.  By assumption we have $r_p \rightarrow 0$ and 
\[
\frac{|S_{r_p}(u,0)|}{r_p^{\gamma}} \leq 2\delta_p \rightarrow 0.
\]
From the preceding analysis of the sections, this can only occur if  $\frac{d(r_p)}{l_{n-1}(r_p)} \rightarrow 0$.  Hence, the rescaled functions 
\[
u_p(x) := \frac{u(\frac{r_{p}}{l_1(r_p)}x_1, \ldots, \frac{r_p}{l_{n-1}(r_p)}x_{n-1}, \frac{r_p}{d(r_p)}x_n)}{r_p}
\]
satisfy
\[\det D^2u_p = \frac{r_p^{n}}{(l_1(r_p)\cdots l_{n-1}(r_p))^2d(r_p)^{k+2}}\frac{1}{(u_p)_n^k}\]
and since $\frac{h^{n}}{(l_1\cdots l_{n-1})^2d(h)^{k+2}}\sim 1$, we can take a convergent subsequence converging to $u_{\infty}$, which is a limiting optimal transport map $((\mathbb R^{n-m+j}\times \tilde{\mathtt C}, dx), (\mathbb R^{n-1}\times \mathbb R_+, y_n^kdy), u_{\infty}, v_{\infty})$ for some $j>0$ and some strict cone $\tilde{\mathtt C}\subset \mathbb R^{m-1-j}\times \mathbb R_+$. Therefore, by the induction hypothesis, there are constants $c, C$  so that for all $h\leq 1$ there holds 
\[
ch^{\frac{n-m+p}{2}+\frac{m-p}{1+\frac{m-p}{m-p+k}}}\leq |S_h(u_{\infty}, 0)|\leq Ch^{\frac{n-m+p}{2}+\frac{m-p}{1+\frac{m-p}{m-p+k}}}.
\]

If we denote $\beta:=\frac{n-m+j}{2}+\frac{m-j}{1+\frac{m-j}{m-j+k}}$, then since $j>0$, we have $\beta < \gamma$.  Fix $\Lambda$ large so that
\[
\frac{1}{2}\frac{c}{C}\Lambda^{\gamma-\beta}>4K.
\]
Then, for $p$ sufficiently large we have
\[
\begin{aligned}
|S_{\frac{r_p}{\Lambda}}(u, 0)|\geq \frac{c}{2C}\frac{1}{\Lambda^{\beta}}|S_{r_p}(u, 0)| &= \Lambda^{\gamma-\beta}\frac{c}{2C}\frac{1}{\Lambda^{\gamma}}|S_{r_p}(u,0)|\\
& \geq 4K\frac{1}{\Lambda^{\gamma}}|S_{r_p}(u,0)|\\
& \geq 4K\frac{1}{\Lambda^{\gamma}}\delta_pr_p^{\gamma}
\end{aligned}
\]
and so $\frac{r_p}{\Lambda} \notin \widehat{B}(K\delta_p)$ for $p$ large, a contradiction.
\end{proof}

\end{document}